\date{\today}
\newcommand{\Z}{{\mathbb Z}}
\newcommand{\R}{{\mathbb R}}
\newcommand{\N}{{\mathbb N}}
\newcommand{\Diff}{\mathrm{Diff}}
\newcommand{\Jac}{\mathop{\mathrm{Jac}}}
\newcommand{\Id}{\mathop{\mathrm{Id}}}
\newcommand{\bS}{\mathbb{S}}
\newcommand{\Bq}{Q}
\newcommand{\Bi}{B'}
\newcommand{\qh}{\mathbf{h}}
\newcommand{\Gr}{\mathrm{Gr}}
\newcommand{\mE}{\mathcal{E}}
\newcommand{\mI}{\mathcal{I}}
\newcommand{\mF}{\mathcal{F}}
\newcommand{\Leb}{{\mathrm{Leb}}}
\newcommand{\SL}{{\mathrm{SL}}}
\newcommand{\Homeo}{\mathrm{Homeo}}
\newcommand{\const}{\mathop{\mathrm{const}}}
\newcommand{\bv}{\bar{v}}
\newcommand{\Dirac}{\delta}
\newcommand{\hB}{\hat B}
\newcommand{\tB}{\widetilde{B}}
\newcommand{\wT}{\widetilde{T}}
\newcommand{\mN}{\mathcal{N}}
\newcommand{\bK}{\mathbf{K}}
\newcommand{\mgr}{\mu}
\newcommand{\msp}{\nu}
\newcommand{\bmsp}{\overline{\msp}}
\newcommand{\RP}{\mathbb{RP}}
\newcommand{\Ind}{\mathbf{1}}
\newcommand{\rxi}{\eta} 
\newcounter{mit}
\newcounter{Bit}
\newtheorem{theorem}{Theorem}[section]
\newtheorem*{theorem*}{Theorem}
\newtheorem{lemma}[theorem]{Lemma}
\newtheorem{prop}[theorem]{Proposition}
\newtheorem{coro}[theorem]{Corollary}
\newtheorem{example}[theorem]{Example}
\theoremstyle{definition}
\newtheorem{remark}[theorem]{Remark}
\newtheorem{defi}[theorem]{Definition}
\def\dist{{\rm dist}}
\newcommand{\bi}{{\bf i}}
\def\N{{\mathbb N}}
\newcommand{\E}{{\mathbb E}\,}
\newcommand{\Prob}{{\mathbb P}\,}
\def\P{\Prob}
\newcommand{\eps}{{\varepsilon}}
\begin{document}

\title[Non-stationary  Furstenberg Theorem]{Non-stationary version of Furstenberg Theorem on random matrix products}

\author[A.\ Gorodetski]{Anton Gorodetski}

\address{Department of Mathematics, University of California, Irvine, CA~92697, USA}

\email{asgor@uci.edu}

\thanks{A.\ G.\ was supported in part by Simons Fellowship and NSF grant DMS--1855541.}

\author[V. Kleptsyn]{Victor Kleptsyn}

\address{CNRS, Institute of Mathematical Research of Rennes, IRMAR, UMR 6625 du CNRS}

\email{victor.kleptsyn@univ-rennes1.fr}

\thanks{V.K. was supported in part by ANR Gromeov (ANR-19-CE40-0007) and by Centre Henri Lebesgue (ANR-11-LABX-0020-01) }

\begin{abstract}
We prove a non-stationary analog of the Furstenberg Theorem on random matrix products (that can be considered as a matrix version of the law of large numbers). Namely, we prove that under a suitable genericity conditions the sequence of norms of random products of independent but not necessarily identically distributed $\SL(d, \mathbb{R})$ matrices grow exponentially fast, and there exists a non-random sequence that almost surely describes asymptotical behaviour of the norms of random products.
\end{abstract}

\maketitle

\section{Introduction}

The asymptotic behavior of sums of i.i.d. random variables is very well studied in the classical probability theory. Analogous questions on random products of matrix-valued i.i.d. random variables were initially formulated in the simplest case of $2\times 2$ matrices with positive entries by Bellman \cite{Bel}. Later these questions attracted lots of attention due to the results by Furstenberg-Kesten \cite{FK} who showed that exponential rate of growth of the norms of the random products (nowadays called Lyapunov exponent) is well defined almost surely, and Furstenberg \cite{Fur1, Fur2}, where it was shown that under some non-degeneracy conditions Lyapunov exponent must be positive. Since then enormous amount of literature on the subject appeared, e.g. see \cite{Ber, Fur3, FKif, GM, GR, KS, Kif, KifS, L, R, SVW, Vi}. Applications of random matrix products appear in a natural way in smooth dynamical systems \cite{V, W1, W2}, spectral theory and mathematical physics \cite{CKM, D, S}, geometric measure theory \cite{HS, PT, Sh}, and other fields. Far reaching generalizations in terms of random walks on groups were developed, see \cite{BQ}, \cite{Fu}, and references therein. Nonlinear one-dimensional analogues of Furstenberg Theorem were also obtained in~\cite{A,DKN1,KN,M}. Another series of generalizations (in terms of positivity of Lyapunov exponents for a generic linear cocycle) was derived in the dynamical systems community, e.g. see \cite{ASV}, \cite{BGV}, \cite{Bo}, \cite{BoV1}, \cite{BoV2}, \cite{BoV3}, \cite{V1}, and the monograph \cite{V}.

  The most famous result is the following Furstenberg Theorem, that we recall here in its classical form:

\begin{theorem*}[Furstenberg~{\cite[Theorem~8.6]{Fur1}}]
Let $\{X_k, k\ge 1\}$ be independent and identically distributed random variables, taking values in $\SL(d, \mathbb{R})$, the $d\times d$ matrices with determinant one, let $G_X$ be the smallest closed subgroup of $\SL(d, \mathbb{R})$ containing the support of the distribution of $X_1$, and assume that
$$
E[\log\|X_1\|]<\infty.
$$
Also, assume that $G_X$ is not compact, and there exists no $G_X$-invariant finite union of proper subspaces of~$\R^d$. Then there exists a positive constant $\lambda_F$ such that with probability one
$$
\lim_{n\to \infty}\frac{1}{n}\log\|X_n\ldots X_2X_1\|=\lambda_F>0.
$$
\end{theorem*}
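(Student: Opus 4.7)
The plan is to follow Furstenberg's original projective-space strategy, organized in three stages: existence of $\lambda_F$, derivation of an integral formula via a stationary measure on $\RP^{d-1}$, and a contradiction argument ruling out $\lambda_F=0$.

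First I would establish the almost sure existence of $\lambda_F$ as a deterministic limit by applying Kingman's subadditive ergodic theorem to the sequence $a_n=\log\|X_n\cdots X_1\|$. Subadditivity follows from submultiplicativity of the operator norm; the driving shift is ergodic because the $X_k$ are i.i.d.; and integrability is ensured by $\E[\log\|X_1\|]<\infty$ together with $\log\|X_1\|\geq 0$ (which holds since $\det X_1=1$ forces $\|X_1\|\geq 1$). This produces $\lambda_F=\lim_n\frac{1}{n}a_n\geq 0$ almost surely, so what remains is to rule out $\lambda_F=0$.

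Next I would pass to the induced Markov chain on $\RP^{d-1}$. Writing $\mu$ for the common law of the $X_k$, the operator $P\phi(\bar v)=\int\phi(X\bar v)\,d\mu(X)$ acts on $C(\RP^{d-1})$; compactness of $\RP^{d-1}$ together with Krylov--Bogolyubov produces at least one $\mu$-stationary probability measure $\nu$. For the log-norm cocycle $\sigma(X,\bar v)=\log(\|Xv\|/\|v\|)$ one then derives the Furstenberg identity
\[
\lambda_F \;\geq\; \int_{\SL(d,\R)}\!\int_{\RP^{d-1}}\sigma(X,\bar v)\,d\nu(\bar v)\,d\mu(X),
\]
so it suffices to exhibit a stationary $\nu$ making the right-hand side strictly positive. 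Assuming toward contradiction that $\lambda_F=0$, this integral vanishes for every stationary $\nu$; the martingale $Z_n(\bar v)=\log\|X_n\cdots X_1 v\|$ (under the randomization $\bar v\sim\nu$) is then $L^1$-bounded and converges almost surely. Furstenberg's core observation is that this convergence forces $\nu$ to be invariant not merely under the averaged operator $P$ but under $X$ for $\mu$-a.e.\ individual $X$, and hence under the whole group $G_X$ generated by the support of $\mu$.

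The final step is to combine irreducibility and non-compactness: any $G_X$-invariant probability measure on $\RP^{d-1}$ either concentrates on a finite union of proper projective subspaces (contradicting irreducibility) or compels $G_X$ to act on its support through a relatively compact family of projective transformations, which in $\SL(d,\R)$ forces $G_X$ itself to be relatively compact (contradicting non-compactness). This dichotomy --- Furstenberg's contraction lemma --- is the main obstacle. The delicate point is that non-compactness of $G_X$ yields sequences $g_n\in G_X$ with $\|g_n\|\to\infty$ whose projectivizations, via the Cartan/KAK decomposition, degenerate to rank-deficient ``quasi-projections'' onto proper subspaces; invariance of $\nu$ under translation by $g_n$ then pushes mass toward these limit subspaces, and one has to make this quantitative to conclude that $\nu$ must concentrate on a proper subspace, contradicting irreducibility. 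Executing this limit-operator analysis carefully is where the bulk of the technical work in the classical proof is concentrated.
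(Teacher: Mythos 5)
The paper does not prove this statement: it is quoted as the classical Furstenberg theorem (with a citation to \cite{Fur1}) and serves only as background for the non-stationary generalization. So your route is necessarily different from anything in the paper. It is worth noting the contrast: the paper's machinery replaces the stationary measure entirely, proving positivity via additivity of the Furstenberg (Kullback--Leibler) entropy along the sequence of averaged pushforwards (Proposition~\ref{p:add} and Corollary~\ref{c.positivity}), and proving almost sure convergence via a block decomposition with large-deviations control of cancellations (Theorem~\ref{t.2}); that approach costs a stronger moment hypothesis $\E\|A\|^\gamma<\infty$ but survives without any stationary measure. Your outline is the standard classical argument --- Kingman for existence, Krylov--Bogolyubov for a stationary $\nu$, the Furstenberg integral formula, and the dichotomy for $G_X$-invariant measures on $\RP^{d-1}$ --- and as a roadmap it is the right one for the i.i.d.\ statement as quoted.

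There is, however, a concrete gap at the pivot of the contradiction argument. The process $Z_n(\bar v)=\log\|X_n\cdots X_1v\|$ is not a martingale: its conditional increment given $X_1,\dots,X_n$ equals $\int\sigma(X,\bar w_n)\,d\mu(X)$ evaluated at the current direction $\bar w_n$, and $\lambda_F=0$ only forces the $\nu$-average of this quantity to vanish, not its value at every $\bar w$; moreover, mean zero does not give $L^1$-boundedness. The object that actually is a bounded martingale is the measure-valued sequence $n\mapsto (X_1X_2\cdots X_n)_*\nu$ (matrices composed in the order opposite to your products), whose a.s.\ weak-$*$ convergence, combined with $\lambda_F=0$, is what yields a probability measure invariant under $\mu$-a.e.\ individual $X$ and hence under all of $G_X$. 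That derivation, together with Furstenberg's lemma that a strongly irreducible non-compact subgroup of $\SL(d,\R)$ admits no invariant probability measure on $\RP^{d-1}$ (your KAK/quasi-projection analysis), constitutes essentially all of the proof; both are only named in your sketch, and the first is named incorrectly. You should also record that the Furstenberg formula holds with equality for at least one stationary $\nu$ (the supremum over stationary measures is attained), since your stated inequality $\lambda_F\ge\int\!\!\int\sigma\,d\nu\,d\mu$ alone would give only $\int\!\!\int\sigma\le 0$ under the hypothesis $\lambda_F=0$, which is not what the contradiction argument consumes.
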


In this paper we generalize Furstenberg Theorem  to the case when the random variables $\{X_k, k\ge 1\}$ do not have to be identically distributed. Let us say a couple of words about a motivation for our results before providing the formal statements.

First of all, the study of asymptotic behavior of sums $S_n=x_n+\ldots +x_1$ of independent real valued random variables, in particular, the Law of Large Numbers, is a foundational statement in classical probability theory. As we mentioned above, many of the limit theorems were proven for that non-commutative case. In particular, Furstenberg Theorem can be considered as a non-commutative version of the Law of Large Numbers.  But in all of the obtained results on random matrix products the matrices were assumed to be identically distributed, while most (if not all) of the results on sums $S_n=x_n+\ldots +x_1$  of independent real valued random variables do not actually require $x_1, \ldots, x_n$ to be identically distributed. It is a very natural task to close this gap.  

As an important application, we mention the non-stationary version of so called Anderson Model. Classical Anderson Model on a one-dimensional lattice is given by a discrete Schr\"odinger operator on $l^2(\Z)$ with random potential:
$$
(Hu)(n)=u(n-1)+u(n+1)+V(n)u(n),
$$
where $\{V(n)\}$ is an iid sequence of random variables. It is known that under suitable assumptions this operator almost surely has pure point spectrum, with exponentially decreasing eigenfunctions (this property is usually referred to as {\it Anderson Localization}). Other properties of this operators (such as dynamical localization, properties of the integrated density of states etc.) were also studied in details, see \cite{AW, D, DKKKR} for some recent surveys. But from the physical point of view it is very natural to consider the case when the potential $\{V(n)\}$ is given by independent but not identically distributed random variables. Indeed, suppose we try to study the transport properties of an electron in the random media with some fixed non-random background. In this case it is natural to consider potential $V(n)=V_{random}(n)+V_{background}(n)$, where $\{V_{background}(n)\}$ is a fixed bounded sequence, and $\{V_{random}(n)\}$ is a sequence of iid random variables. Since the spectral properties of the 1D discrete Schr\"odinger operator are closely related to the properties of the products of the corresponding transfer matrices, this setting leads to the question about properties of a random matrix products in the non-stationary case. In particular, the results of this paper allow to prove spectral and dynamical localization in non-stationary Anderson Model, as we plan to show in our next paper \cite{GK}.

Up to now there were literally no results on random matrix products in non-stationary case, since there were no suitable techniques available. Indeed, in most cases the proofs in the stationary case are based on existence of a stationary measure, which restricts all the existing techniques either to the case of identically distributed matrices (or, at least, with the distributions given by some stationary process, as in \cite{Kif2}), or to the context of Oseledets Theorem~\cite{O} (see also~\cite{R1}), with some exceptions that are usually focused on specific models, with the proofs heavily based on the special features of the model. Our first main result, Theorem~\ref{t.L},  describes the growth of the norms of random matrix products in the case of independent but not identically distributed matrices. As an intermediate step, we provide a general result, Theorem \ref{p.max.full}, that we called Atom Dissolving Theorem, that can be used in non-stationary context as a tool to replace the statements on regularity of stationary measures, but is also of independent interest, see Section \ref{ss.ad}.

Let us now provide the formal statements of our main results.

\subsection{Non-stationary version of Furstenberg theorem}


Let $\bK$ be a compact set of probability measures on $\SL(d, \mathbb{R})$; as a particular case, one can consider $\bK=\{\mgr_i\}_{i=1, \ldots, k}$ being a finite set.
For any $A\in \SL(d, \mathbb{R})$ we will denote by $f_A:\RP^{d-1}\to \RP^{d-1}$ the induced projective transformation.

For a given sequence $(\mgr_i)_{i\in \mathbb{N}}$, $\mgr_i\in \bK$,
we let $A_i\in \SL(d, \mathbb{R})$ be chosen randomly with respect to distribution $\mgr_i$, set
\[
T_n=A_nA_{n-1}\ldots A_1,
\]
and denote
\begin{equation}\label{eq:L-def}
L_n=\E\log\|T_n\|,
\end{equation}
where the expectation is taken over the distribution $\mgr_1\times \mgr_2\times\ldots\times \mgr_n$. This expectation exists once the log-moment of the norm $\E \log \|A\|$ is finite for all $\mgr\in\bK$, and we will be always imposing (at least) this assumption.

Our first main result is the following theorem:

\begin{theorem}\label{t.L}
Assume that the following hold:
\begin{itemize}
\item
\textbf{(finite moment condition)} There exists $\gamma>0$, $C$ such that
\begin{equation}\label{eq:finite-moment}
\forall \mgr\in \bK\quad  \int_{\SL(d,\R)} \|A\|^{\gamma} d \mgr(A) < C
\end{equation}
\item
\textbf{(measures condition)} For any $\mgr\in \bK$ there are no Borel probability measures $\msp_1$, $\msp_2$ on $\RP^{d-1}$ such that $(f_A)_*\msp_1=\msp_2$ for $\mgr$-almost every $A\in \SL(d, \mathbb{R})$
\item
\textbf{(spaces condition)} For any $\mgr\in \bK$ there are no two finite unions $U$, $U'$ of proper subspaces of~$\R^{d}$ such that $A(U)=U'$ for $\mgr$-almost every $A\in \SL(d, \mathbb{R})$.
\end{itemize}
Then the sequence $L_n=\E \log \| T_n \|$ grows at least linearly, i.e. then there exists $\lambda>0$ such that for any $n$ and any $\mgr_1,\dots,\mgr_n\in \bK$ we have
\[
L_n \ge n \lambda,
\]
and it predicts the growth of the norm of the random products in the following sense: almost surely, we have
\[
\lim_{n\to \infty}\frac{1}{n}\left(\log\|T_n\|-L_n\right)=0.
\]
\end{theorem}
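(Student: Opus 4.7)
The plan is to reduce the non-stationary product to a sum of one-step drifts along a random projective trajectory, and then use the Atom Dissolving Theorem to make those drifts uniformly positive. Fix a unit vector $v_0 \in \R^d$ and set $\hv_k := T_k v_0/\|T_k v_0\|$, so that
\[
\log \|T_n v_0\| = \sum_{k=1}^n \log \|A_k \hv_{k-1}\|.
\]
Let $\msp_{k-1}$ denote the law of $\hv_{k-1}$ on $\RP^{d-1}$ and set $\Phi(\mgr, \msp) := \int\!\!\int \log \|Av\|\, d\mgr(A)\, d\msp(v)$, so that $\E\log\|T_n v_0\| = \sum_{k=1}^n \Phi(\mgr_k, \msp_{k-1})$. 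Averaging over $v_0$ drawn uniformly on the unit sphere absorbs the angle loss via \eqref{eq:finite-moment} and yields $L_n = \E\log\|T_n v_0\| + O(1)$.

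The crux of linear growth is a uniform one-step positivity bound: I would show that there exist $\lambda_0 > 0$ and $\eps_0 > 0$ such that every measure $\msp$ on $\RP^{d-1}$ that is \emph{$\eps_0$-non-concentrated} (small mass in every small neighborhood of every proper subspace) satisfies $\Phi(\mgr, \msp) \ge \lambda_0$ for all $\mgr \in \bK$. The proof is by compactness and contradiction: a sequence $(\mgr_j, \msp_j)$ with $\msp_j$ uniformly $\eps_0$-non-concentrated and $\Phi(\mgr_j, \msp_j) \to 0$ would, by compactness of $\bK$ and Prokhorov for the $\eps_0$-non-concentrated class, accumulate to $(\mgr_*, \msp_*)$ with $\Phi(\mgr_*, \msp_*) \le 0$ and $\msp_*$ still non-concentrated. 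Classical Furstenberg-type reasoning (Jensen's inequality combined with $\SL(d,\R)$-invariance of the volume form, applied to a Cesaro-average stationary measure built from $\msp_*$ under $P_{\mgr_*}$) then produces a $\mgr_*$-invariant structure contradicting the \emph{measures condition}; the \emph{spaces condition} rules out concentration of $\msp_*$ onto a proper-subspace flag.

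The Atom Dissolving Theorem~\ref{p.max.full} supplies the complementary input: applied to the iteration $\msp_k := P_{\mgr_k}\cdots P_{\mgr_1}(\Dirac_{v_0})$ with $P_\mgr(\msp) := \int (f_A)_*\msp\, d\mgr(A)$, it guarantees that $\msp_k$ becomes $\eps_0$-non-concentrated after $k_0 = k_0(\bK,\eps_0)$ steps. Combined with the one-step bound, this gives $\Phi(\mgr_k, \msp_{k-1}) \ge \lambda_0$ for $k \ge k_0$, and hence $L_n \ge n\lambda$ for some $\lambda > 0$. The \emph{main obstacle} I anticipate is the synchronization of the two quantitative non-concentration notions: the specific form of non-concentration delivered by Atom Dissolving must be exactly what is needed to feed into the compactness-contradiction step above.

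For the almost-sure statement, set $Y_k := \log \|A_k \hv_{k-1}\|$ and $\phi_\mgr(v) := \int \log\|Av\|\, d\mgr(A)$, and decompose
\[
\log\|T_n v_0\| - \E\log\|T_n v_0\| = \sum_{k=1}^n M_k + \sum_{k=1}^n \bigl(\phi_{\mgr_k}(\hv_{k-1}) - \E \phi_{\mgr_k}(\hv_{k-1})\bigr),
\]
where $M_k := Y_k - \phi_{\mgr_k}(\hv_{k-1})$ is a martingale-difference sequence with respect to $\mathcal{F}_{k-1} := \sigma(A_1,\dots,A_{k-1})$. The finite $\gamma$-moment~\eqref{eq:finite-moment}, after a standard truncation, delivers the martingale strong law, giving $\frac{1}{n}\sum_k M_k \to 0$ almost surely. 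The second sum---centered functionals along the non-stationary Markov chain $(\hv_k)$---is handled by a decay-of-correlations estimate that is itself a consequence of the contraction underlying the Atom Dissolving Theorem. Combined with $L_n = \E\log\|T_n v_0\| + O(1)$, this yields $\frac{1}{n}(\log\|T_n\| - L_n) \to 0$ almost surely.
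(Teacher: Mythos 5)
Your reduction $\E\log\|T_nv_0\|=\sum_k\Phi(\mgr_k,\msp_{k-1})$ with $\Phi(\mgr,\msp)=\int\!\!\int\log\|Av\|\,d\mgr(A)\,d\msp(v)$ is correct, but the lemma carrying the whole first half --- uniform one-step positivity of $\Phi$ over $\eps_0$-non-concentrated measures --- is false. Unlike a relative entropy, the log-norm drift is not nonnegative, and non-concentration at small scales does not prevent it from being negative. Take $d=2$ and $\mgr=\frac12(\text{uniformly random rotation})+\frac12\delta_{A_0}$ with $A_0=\mathrm{diag}(t,1/t)$; this satisfies the measures and moment conditions, and $\Phi(\mgr,\msp)=\frac12\int\log\|A_0v\|\,d\msp$. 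For $\msp=(1-a)\Leb+a\cdot\mathrm{unif}_J$, where $J$ is the arc on which $\|A_0v\|<1$, one gets $\Phi(\mgr,\msp)<0$ once $a$ is close to $1$, yet $\msp$ has bounded density and hence satisfies $\msp(U_r(L))\le\eps_0$ for every line $L$ and every sufficiently small $r$ --- which is the only kind of non-concentration that Theorem~\ref{p.max.full} or Proposition~\ref{p:proj-dissolving} can ever supply (they control exact atoms and subspaces and, by compactness, their small neighborhoods). If instead you demand non-concentration at a fixed macroscopic scale with arbitrarily small $\eps_0$, the dissolving theorems do not deliver it, and it is generally unattainable (the law of $\hv_k$ can pile up near an expanding direction). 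Your compactness-and-contradiction step has the same flaw: the ``classical Furstenberg-type reasoning'' via a Ces\`aro average produces a $\mgr_*$-\emph{stationary} measure, which bears no relation to the non-stationary orbit $\msp_k$ whose drifts you are summing, so no contradiction with the measures condition results. The paper sidesteps all of this by replacing the drift with the Furstenberg (Kullback--Leibler) entropy $\Phi_\mgr(\msp)=\E_\mgr h(f_*\msp\mid\mgr*\msp)$, which genuinely is nonnegative, is additive along non-stationary compositions (Proposition~\ref{p:add}), has a positive infimum by lower semicontinuity plus compactness (Corollary~\ref{c.positivity}), and bounds $\E\log\mN(F_n)=d\,\E\log\|T_n\|$ from below when one starts from Lebesgue measure.

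The almost-sure statement has a gap of the same nature. The martingale part $\frac1n\sum_kM_k\to0$ is fine, but the remaining sum $\sum_k\bigl(\phi_{\mgr_k}(\hv_{k-1})-\E\phi_{\mgr_k}(\hv_{k-1})\bigr)$ is exactly where the difficulty lives, and the asserted ``decay-of-correlations estimate that is a consequence of Atom Dissolving'' is not available: Theorem~\ref{p.max.full} gives no quantitative mixing for Lipschitz observables of the non-stationary chain $(\hv_k)$, and controlling this sum is essentially equivalent to the concentration of $\log\|T_nv_0\|$ you are trying to prove. The paper instead establishes the full large-deviations estimate (Theorem~\ref{t.2}): it groups the matrices into blocks $B_j$ of length $k$, bounds the deficit $\sum_j\log\|B_j\|-\log|T_nv_0|$ by the cancellation terms $R_j$, uses Proposition~\ref{p:proj-dissolving} to make a strong cancellation in each block an event of probability $<\eps_1$, and runs a Chernoff-type argument on the resulting sums; the almost-sure convergence then follows by Borel--Cantelli. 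Both halves of your proposal therefore need to be replaced, not merely completed.
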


The particular case of $2\times 2$ matrices, and the existence of an exponentially contracted random vector in that case, are important in the setting of 1D Anderson Localization (see~\cite{D,His}). For this case, Theorem~\ref{t.L} has the following addendum:
\begin{prop}[Contracted direction]\label{t:1.5}
Let $d=2$, and assume that the finite moment and measures conditions
of Theorem~\ref{t.L} hold. Then almost surely there exists a unit vector $\bar v\in \mathbb{R}^2$ such that $|T_n\bar v|\to 0$ as $n\to \infty$. Moreover,
$$
\lim_{n\to \infty}\frac{1}{n}\left(\log|T_n\bar v|+L_n\right)=0
$$
\end{prop}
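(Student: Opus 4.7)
The plan is to define $\bar v$ as the almost-sure limit of the contracted directions of $T_n$ and to compare $|T_n\bar v|$ to the smallest singular value $1/\|T_n\|$. Theorem~\ref{t.L} applies here because for $d=2$ the measures condition implies the spaces condition: if the latter were violated, giving finite sets $U,U'\subset\RP^1$ with $A(U)=U'$ for $\mgr$-a.e.\ $A$, then the uniform atomic measures on $U,U'$ would violate the former.

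For each $n$ with $\|T_n\|>1$ let $\bar v_n\in\R^2$ be a unit contracted direction of $T_n$, i.e.\ $|T_n\bar v_n|=1/\|T_n\|$ (since $\det T_n=1$ forces the singular values of $T_n$ to be reciprocal). I claim the angular step $\theta_n:=\angle(\bar v_n,\bar v_{n+1})$ satisfies
\[
\sin\theta_n\le \frac{\|A_{n+1}\|}{\|T_n\|\,\|T_{n+1}\|}.
\]
Indeed, resolving $T_{n+1}\bar v_n$ in the output singular basis of $T_{n+1}$ as $\cos\theta_n\,T_{n+1}\bar v_{n+1}+\sin\theta_n\,T_{n+1}\bar v_{n+1}^\perp$ gives an orthogonal sum of lengths $\|T_{n+1}\|^{-1}$ and $\|T_{n+1}\|$, so $|T_{n+1}\bar v_n|\ge \sin\theta_n\cdot\|T_{n+1}\|$, while $|T_{n+1}\bar v_n|=|A_{n+1}T_n\bar v_n|\le \|A_{n+1}\|/\|T_n\|$ is immediate. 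Combined with the a.s.\ lower bound $\|T_n\|\ge e^{(\lambda-\eps)n}$ from Theorem~\ref{t.L} and the a.s.\ upper bound $\|A_n\|\le e^{\eps n}$ deduced from $\Prob(\|A_n\|^\gamma>e^{\gamma\eps n})\le Ce^{-\gamma\eps n}$ and Borel--Cantelli, one gets $\sum_n\theta_n<\infty$ a.s., and so $\bar v:=\lim_n\bar v_n$ exists almost surely.

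Setting $\phi_n=\angle(\bar v_n,\bar v)$ and writing $\bar v=\cos\phi_n\,\bar v_n+\sin\phi_n\,\bar v_n^\perp$, the orthogonality of $T_n\bar v_n$ and $T_n\bar v_n^\perp$ yields
\[
|T_n\bar v|^2 = \frac{\cos^2\phi_n}{\|T_n\|^2}+\sin^2\phi_n\cdot\|T_n\|^2.
\]
The trivial bound $|T_n\bar v|\ge 1/\|T_n\|$ combined with $\log\|T_n\|=L_n+o(n)$ a.s.\ from Theorem~\ref{t.L} immediately gives $\liminf\tfrac1n(\log|T_n\bar v|+L_n)\ge 0$. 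To obtain $|T_n\bar v|\to 0$ and the matching upper bound $\sin\phi_n\cdot\|T_n\|^2\le e^{o(n)}$ (which rewrites as $\phi_n\le e^{-2L_n+o(n)}$ and closes the claim), one bounds $\phi_n\le\sum_{k\ge n}\theta_k$ and controls the tail using Theorem~\ref{t.L} applied both globally and to each shifted sub-cocycle $(\mgr_{n+1},\mgr_{n+2},\ldots)$, which satisfies the same hypotheses: this relates $\|T_k\|$ to $\|T_n\|$ on a set of full measure and allows geometric summation.

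The hardest part is this last tail estimate. In the non-stationary regime $L_n/n$ need not converge, so relating $\|T_k\|$ to $\|T_n\|$ for $k>n$ beyond the elementary $\|T_k\|\ge \|T_n\|/\|A_k\cdots A_{n+1}\|$ requires exploiting the uniform positivity of $\lambda$ for every shifted sub-cocycle; this is presumably where the Atom Dissolving Theorem~\ref{p.max.full} mentioned in the introduction enters the argument to provide the regularity needed to make the geometric control almost sure.
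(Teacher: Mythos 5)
Your geometric skeleton is sound: the reduction of the spaces condition to the measures condition for $d=2$, the bound $\sin\theta_n\le \|A_{n+1}\|/(\|T_n\|\,\|T_{n+1}\|)$ on the angular step of the contracted directions, and the orthogonal decomposition $|T_n\bar v|^2=\cos^2\phi_n\|T_n\|^{-2}+\sin^2\phi_n\|T_n\|^2$ are all correct. In effect you are re-deriving by hand the deterministic statement that the paper imports as a black box (Theorem~8.3 of~\cite{LS}). The lower bound $\liminf\frac1n(\log|T_n\bar v|+L_n)\ge 0$ is indeed immediate from $|T_n\bar v|\ge\|T_n\|^{-1}$ and Theorem~\ref{t.L}.

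The gap is the tail estimate $\sum_{k\ge n}\theta_k\le e^{-2L_n+o(n)}$, which you explicitly defer. With $\theta_k\le e^{-L_k-L_{k+1}+o(k)}$ a.s., geometric summation requires a lower bound of the form $L_k\ge L_n+c(k-n)-C$ for $k\ge n$; neither $L_k\ge\lambda k$ nor the bounded-decrement bound $L_k\ge L_n-C_A(k-n)$ gives this (if $L_n\gg\lambda n$, the partial sums of $e^{-2L_k}$ can exceed $e^{-2L_n+o(n)}$ by an exponential factor). Applying Theorem~\ref{t.L} to the shifted sub-cocycle, as you suggest, only yields $L_{[n,k]}\ge\lambda(k-n)$; what is actually needed is the approximate \emph{superadditivity} $L_k\ge L_n+(1-\eps)L_{[n,k]}-C_\eps$, i.e.\ a reverse to the trivial submultiplicative inequality $L_k\le L_n+L_{[n,k]}$. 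This is exactly the paper's Lemma~\ref{l:L-add}, and it is not a soft consequence of anything already stated: its proof conditions on $T_n$ and the long block $\Bi=T_{[n+k',k]}$, uses Proposition~\ref{p:proj-dissolving} (Subspaces Avoidance, not the Atoms Dissolving theorem directly) to show that a short randomizing block $\Bq=T_{[n,n+k']}$ puts the expanding direction of $T_n$ outside a $\rho$-neighborhood of the contracting hyperplane of $\Bi$ except with probability $\eps_1$, and then averages the two resulting lower bounds for $\log\|T_k\|$. The paper uses this lemma to build a monotone increasing $f(n)$ with $|f(n)-L_n|=O(1)$ (Corollary~\ref{c:L-growth}) so that~\cite[Theorem~8.3]{LS} applies; in your direct approach the same lemma is what licenses the geometric summation of $\theta_k$. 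Until that cancellation control is supplied, the convergence of $\bar v_n$ at the required rate, and hence both $|T_n\bar v|\to 0$ and the upper half of the limit, remain unproven.
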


We will prove Theorem~\ref{t.L} (and thus Proposition~\ref{t:1.5}) by actually establishing a stronger conclusion, the Large Deviations Estimates Theorem:
\begin{theorem}[Large Deviations for Nonstationary Products]\label{t.2}
Under the assumptions of Theorem~\ref{t.L},
for any $\varepsilon>0$ there exists $\delta>0$ such that for all sufficiently large $n\in \mathbb{N}$ we have
$$
\mathbb{P}\left\{\left|\log\|T_n\|-L_n\right|>\varepsilon n\right\}<e^{-\delta n},
$$
where $\mathbb{P}=\mgr_{1}\times \mgr_{2}\times\ldots\times \mgr_{n}$. Moreover, the same estimate holds for the lengths of random images of any given initial unit vector $v_0$:
$$
\forall v_0\in \R^d, \, |v_0|=1 \quad  \mathbb{P}\left\{\left|\log\|T_n v_0\|-L_n\right|>\varepsilon n\right\}<e^{-\delta n}.
$$
\end{theorem}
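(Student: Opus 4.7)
I will establish the concentration inequality first for the length of the image of a prescribed unit vector, $\log\|T_n v_0\|$, and then transfer it to the operator norm by a net-and-union argument. The engine is a block decomposition of the product together with the Atom Dissolving Theorem~\ref{p.max.full}, which plays the role that the spectral gap of a transfer operator plays in the classical stationary setup.

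Partition $\{1,\dots,n\}$ into consecutive blocks of length $N$ for a large constant $N=N(\varepsilon)$ to be fixed later, and set $B_i=A_{iN}\cdots A_{(i-1)N+1}$, so that the $B_i$ are independent. Writing $u_0=v_0$ and $u_i=B_iu_{i-1}/|B_iu_{i-1}|$, telescope
\[
\log\|T_n v_0\|=\sum_{i=1}^{m}\log\|B_i u_{i-1}\|,\qquad m=\lfloor n/N\rfloor.
\]
Conditional on $\mathcal F_{i-1}=\sigma(B_1,\dots,B_{i-1})$, each summand depends on the past only through $u_{i-1}\in\RP^{d-1}$. The goal is to show that for $N$ large, $\E\log\|B_iu\|$ is essentially independent of $u\in\RP^{d-1}$ up to error $\varepsilon N$, and equal to $L_{iN}-L_{(i-1)N}+o(N)$; this is precisely where the Atom Dissolving Theorem intervenes, using the measures and spaces conditions to prevent any push-forward law from concentrating on proper subvarieties, uniformly over the choices $\mgr_j\in\bK$ (with uniformity provided by compactness of $\bK$).

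With that in hand, center $\xi_i=\log\|B_i u_{i-1}\|-\E[\log\|B_i u_{i-1}\|\mid\mathcal F_{i-1}]$, so $\sum_i\xi_i$ is a martingale. Truncate each $\xi_i$ at level $\varepsilon N$: the untruncated mass is controlled via a union bound by the $\gamma$-moment condition~\eqref{eq:finite-moment}, and the truncated martingale obeys Azuma--Hoeffding, yielding $\P(|\sum_i\xi_i|>\varepsilon n)<e^{-\delta n}$. Combined with the uniform per-block mean estimate of the previous step, this gives
\[
\P\bigl\{\,\bigl|\log\|T_n v_0\|-L_n\bigr|>\varepsilon n\,\bigr\}<e^{-\delta n}.
\]
To pass to $\log\|T_n\|$, cover $S^{d-1}$ by a net of size polynomial in $n$ at spacing $e^{-\alpha n}$: on the exponentially likely event $\{\|T_n\|\le e^{Cn}\}$ (from the moment bound), $\log\|T_n\|$ is within $O(1)$ of $\max_{v^*\in\mathrm{net}}\log\|T_n v^*\|$, so a union bound against the vector-level tails gives the full theorem.

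The main obstacle is the first half of the second paragraph: producing a per-block estimate that is uniform in both the starting direction $u\in\RP^{d-1}$ and in the block index $i$, without access to any invariant measure. This is exactly the purpose of the Atom Dissolving Theorem, and the entire plan depends on it being strong enough to guarantee that the push-forward laws of the random directions $u_i$ carry no large atoms at a prescribed scale. Secondary nuisances---truncation of unbounded increments, and comparison of $\E\log\|T_n v_0\|$ with $L_n=\E\log\|T_n\|$ uniformly in $v_0$---are technical but routine once the block estimate is in place.
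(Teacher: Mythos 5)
Your architecture --- block decomposition, a per-block estimate showing that $\log|B_i u|$ is rarely much smaller than $\log\|B_i\|$ uniformly in the direction $u$, and then a concentration inequality for the sum --- is essentially the paper's, and you have correctly located the crux in the per-block estimate. But the plan has a genuine gap precisely there. First, the Atoms Dissolving Theorem~\ref{p.max.full} only controls the probability that a random image lands on a prescribed \emph{point} of $\RP^{d-1}$; what the per-block estimate requires is that the direction produced by a short randomizing prefix avoids a $\rho$-neighbourhood of the most-contracted \emph{hyperplane} of the remaining long factor. For $d=2$ these coincide, but for $d>2$ one needs the Subspaces Avoidance statement (Proposition~\ref{p:proj-dissolving}), which is where the spaces condition actually enters and which is proved by a separate induction over the Grassmannians $\Gr(m,d)$ --- it does not follow from atom dissolving alone, and your plan never supplies it. Second, even granting subspace avoidance, turning ``the bad event has conditional probability at most $\eps_1$'' into a bound on $\E[\log|B_iu_{i-1}|\mid\mF_{i-1}]$, or into a large deviations bound for the cancellation terms, requires handling the fact that on the bad event the loss can be as large as $d\log\|B_i\|$, which is unbounded. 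The paper does this by splitting $B_i=\Bi_i\Bq_i$, conditioning on a $\sigma$-algebra with respect to which $\Bq_i$ is independent of the bad hyperplane of $\Bi_i$, and running a conditional exponential-moment recursion (Lemma~\ref{l:ind}). Your truncation of the martingale increments does not substitute for this: the truncation error must itself be shown exponentially unlikely at the right scale (note $\E\|B_i\|^{\gamma}\le C^{N}$, so truncating at level $\eps N$ is only harmless if $\eps$ exceeds $\gamma^{-1}\log C$), and the conditional mean must be shown insensitive both to the truncation and to the starting direction.

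Two smaller points. Your net argument is internally inconsistent: a net of spacing $e^{-\alpha n}$ on $S^{d-1}$ has cardinality of order $e^{\alpha(d-1)n}$, not polynomial in $n$. Fortunately a net of \emph{constant} spacing $1/2$ already satisfies $\max_{v^*}|T_nv^*|\ge\|T_n\|/2$, so the union bound is over $O(1)$ vectors; alternatively the paper dispenses with the net entirely via the sandwich $\sum_j\log\|B_j\|\ge\log\|T_n\|\ge\log|T_nv_0|=\sum_j\log\|B_j\|-\sum_jR_j$, which delivers the norm statement and the vector statement simultaneously. Finally, identifying the centering constant with $L_n=\E\log\|T_n\|$ (rather than with $\sum_j\E\log\|B_j\|$, which is what the block argument naturally produces) is not free of content: the paper derives it a posteriori from the already-established concentration together with a Cauchy--Schwarz estimate on the tails, so you should not dismiss it as routine.
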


\begin{remark}
The lower bound on $n$ in Theorem \ref{t.2} depends on $\varepsilon>0$ and the compact set of distributions $\mathbf{K}$, but can be chosen uniformly over all the specific choices of the sequences of measures $\mu_1, \mu_2, \ldots$ from $\mathbf{K}$ and, in the second part, the unit vector $v_0$.
\end{remark}

\subsection{Exponential growth of the norms}

The sequence $\{L_n\}$ in Theorem \ref{t.L} must grow at least linearly. This statement by itself holds under weaker assumption than those in Theorem  \ref{t.L}, and so we formulate the statement on growth of the random matrix products separately:
\begin{theorem}\label{t.1}
Assume that the following hold:
\begin{itemize}
\item \textbf{(log-moment condition)} For any $\mgr\in \bK$ one has
\[
\int_{\SL(d,\R)} \log \|A\| d \mgr(A) < \infty
\]
\item \textbf{(measures condition)} For any $\mgr\in \bK$ there are no Borel probability measures $\msp_1$, $\msp_2$ on $\RP^{d-1}$ such that $(f_A)_*\msp_1=\msp_2$ for $\mgr$-almost every $A\in \SL(d, \mathbb{R})$
\end{itemize}
Then there exists $\lambda>0$ such that for any $n$ and any $\mgr_1,\dots,\mgr_n\in \bK$ we have
\[
L_n \ge n \lambda.
\]
In particular, for any fixed sequence $(\mgr_i)_{i\in \mathbb{N}}\in \bK^{\mathbb{N}}$ we have
\[
\liminf_{n\to \infty}\frac{1}{n} L_n \ge \lambda >0.
\]
\end{theorem}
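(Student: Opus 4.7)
The plan is to reduce the assertion to a uniform positive ``one-step-in-distribution'' gain and then chain such gains across blocks of fixed length.

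For any unit vector $v_0 \in \R^d$, let $\msp_k$ denote the law on $\RP^{d-1}$ of the direction of $T_k v_0$. Then $\msp_0 = \delta_{[v_0]}$, $\msp_k = \Phi_{\mgr_k}(\msp_{k-1})$ for $\Phi_{\mgr}(\msp) := \int (f_A)_* \msp \, d\mgr(A)$, and conditioning on the values of $A_1, \dots, A_{k-1}$ (which are independent of $A_k$) yields the telescoping identity
\[ L_n \;\ge\; \E \log \|T_n v_0\| \;=\; \sum_{k=1}^n F(\mgr_k, \msp_{k-1}), \qquad F(\mgr, \msp) := \int\!\!\int \log \|A v\| \, d\mgr(A) \, d\msp(v). \]
It therefore suffices to bound this sum below by $n\lambda$, uniformly in $v_0$ and in the sequence $(\mgr_k) \in \mathbf{K}^n$.

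The heart of the argument is a \emph{uniform block gain lemma}: there exist $N \in \mathbb{N}$ and $\lambda_0 > 0$ such that for all $\mgr_1, \ldots, \mgr_N \in \mathbf{K}$ and every probability measure $\msp_0$ on $\RP^{d-1}$, $\sum_{k=1}^N F(\mgr_k, \msp_{k-1}) \ge N \lambda_0$. Given such $N$ and $\lambda_0$, split $\{1, \dots, n\}$ into $\lfloor n/N \rfloor$ full blocks of length $N$ and a short remainder; the remainder is controlled from below by the $\SL(d,\R)$-inequality $\log \|A v\| \ge -(d-1) \log \|A\|$ together with the log-moment hypothesis, while the trivial $L_n \ge 0$ (since $\|T_n\| \ge 1$ because $\det T_n = 1$) handles small $n$. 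This yields $L_n \ge n \lambda$ for every $n$ with some $\lambda > 0$.

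I would prove the block gain lemma by contradiction and compactness. Negating it produces $N_j \to \infty$, block sequences $(\mgr^{(j)}_k)_{k \le N_j} \subset \mathbf{K}$, and initial measures $\msp_0^{(j)}$ whose block-averaged gains tend to $0$. Form the empirical pair measures $\Pi_j := \frac{1}{N_j} \sum_{k=1}^{N_j} \delta_{(\mgr^{(j)}_k, \msp^{(j)}_{k-1})}$ on the compact space $\mathbf{K} \times \mathrm{Prob}(\RP^{d-1})$ and extract a weak-$*$ subsequential limit $\Pi$. Two features pass to the limit: (i) $\int F \, d\Pi \le 0$, using boundedness below of $F$ from the log-moment; and (ii) a Markov-type stationarity of the second coordinate, namely the maps $(\mgr,\msp) \mapsto \msp$ and $(\mgr,\msp) \mapsto \Phi_\mgr(\msp)$ induce equal pushforwards of $\Pi$, since the empirical distributions of $\msp^{(j)}_{k-1}$ and $\msp^{(j)}_k$ differ only at the two endpoints --- an $O(1/N_j)$ discrepancy. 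A disintegration of $\Pi$ over its first marginal then produces $\mgr_* \in \supp(\Pi_1) \subseteq \mathbf{K}$ and a $\mgr_*$-stationary probability $\msp_*$ on $\RP^{d-1}$ with $F(\mgr_*, \msp_*) = 0$. Furstenberg's classical lemma (vanishing Lyapunov integral on a stationary measure forces full invariance) upgrades this to $(f_A)_* \msp_* = \msp_*$ for $\mgr_*$-almost every $A$; taking $\msp_1 = \msp_2 = \msp_*$ then contradicts the measures condition at $\mgr_*$.

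The main obstacle is the limit-passage in (i)--(ii) together with the disintegration step producing a \emph{single} pair $(\mgr_*, \msp_*)$ on which $F$ truly vanishes (rather than merely ``on average''). The function $F$ is only lower-semicontinuous under weak-$*$ convergence, and it can jump downward when $\msp$ concentrates near short-singular directions of $\mgr$-typical matrices; ruling out such concentration demands a regularity input, e.g.\ a truncation exploiting the log-moment or a dedicated ``atom-dissolving'' step on the limit fibers. Under the stronger $\gamma$-moment hypothesis of Theorem~\ref{t.L} this control is comparatively direct, but under only the log-moment condition this is where I expect the bulk of the technical work to lie.
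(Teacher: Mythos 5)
There is a genuine gap, and it sits at the very first reduction rather than in the technical points you flag at the end. The inequality $L_n\ge \E\log\|T_nv_0\|$ together with the telescoping identity is correct, but under the hypotheses of Theorem~\ref{t.1} (measures condition only, no spaces condition) the ``uniform block gain lemma'' is \emph{false}: the quantity $\sum_{k=1}^N F(\mgr_k,\msp_{k-1})$ need not be positive, and can in fact be of order $-cN$. The paper's own Appendix provides the counterexample: the measure $\mgr_\alpha$ on $\SL(4,\R)$ supported on block-diagonal matrices $\hB=\mathrm{diag}(\tfrac1{100}B^{(1)},\,100B^{(2)})$ satisfies the measures condition, yet for $v_0\in\R^2\times\{0\}$ and $\mgr_1=\dots=\mgr_N=\mgr_\alpha$ one has $\E\log\|T_Nv_0\|\approx N(\lambda_B-\log 100)<0$. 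The norm $\|T_N\|$ grows exponentially because of the \emph{other} block, which a fixed initial vector (or a fixed initial Dirac mass $\msp_0$) never detects; this is precisely why Theorem~\ref{t.L} requires the additional spaces condition while Theorem~\ref{t.1} does not, and why no argument routed through $\E\log\|T_nv_0\|$ for a single $v_0$ can prove Theorem~\ref{t.1}. The same example breaks your closing step: the ``classical lemma'' that a $\mgr_*$-stationary $\msp_*$ with $\int\!\!\int\log\|Av\|\,d\mgr_*\,d\msp_*=0$ must be a.e.-invariant is false for reducible systems in $d\ge 3$. Taking $\msp_*=t\nu_1+(1-t)\nu_2$, where $\nu_i$ are the Furstenberg measures of the two $2$-dimensional blocks, gives a stationary measure on which the integral vanishes by cancellation between the expanding and contracting blocks, while $\msp_*$ is not invariant under a.e.\ matrix. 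The functional $F$ is signed and detects only an averaged conditional exponent; it cannot be played against the measures condition. (The extraction of a single stationary pair $(\mgr_*,\msp_*)$ from the empirical pair measure is also unjustified --- the limit relation you derive only matches distributions of $\msp$ and $\mgr*\msp$ under $\Pi$ and is compatible with cycles among several measures --- but this is secondary to the two obstructions above.)

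For comparison, the paper avoids both problems by changing the functional and the initial measure. It works with $\|T_n\|^d=\mN(f_{T_n})$, the maximal volume contraction rate of the projectivized map, bounds $\log\mN(F_n)\ge h\bigl((F_n)_*\Leb\mid\Leb\bigr)$ starting from the \emph{Lebesgue} measure on $\RP^{d-1}$ (so that all directions, including the expanding block, are seen), and replaces $F$ by the nonstationary Furstenberg entropy $\Phi_{\mgr}(\msp)=\E_{\mgr}h(f_*\msp\mid \mgr*\msp)$. This is a Kullback--Leibler divergence: it is nonnegative and vanishes \emph{exactly} when the image of $\msp$ is deterministic, which is what the measures condition forbids; joint lower semicontinuity and compactness of $\bK$ then give a uniform per-step gain $\qh>0$, and an exact additivity identity (Proposition~\ref{p:add}) replaces your block decomposition. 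If you want to salvage your outline, the essential repairs are to replace $\delta_{[v_0]}$ by $\Leb$ and to replace $F$ by a nonnegative functional whose vanishing is equivalent to a deterministic image.
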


Questions regarding exponential growth of nonstationary random matrix products were discussed and popularized by I.~Goldsheid for a long time. In his recent paper~\cite{G4}, it was shown that under the same ``measure condition'' as in Theorem~\ref{t.1},
there exists a positive $\lambda$ such that almost surely
\[
\liminf_{n\to\infty} \frac{1}{n} \log \|T_n\| \ge \lambda>0.
\]
The proof was obtained by completely different methods.

Our methods allow to consider Theorem~\ref{t.1} as a particular case of a more general result. Namely, consider any closed manifold $M$ and the set of its $C^1$-diffeomorphisms~$\Diff^1(M)$. Assume that $M$ is equipped with a Riemannian metric, so that one can consider the Lebesgue measure $\Leb_M$ on $M$ and the corresponding Jacobian
\[
\Jac(f)|_x = \left |\det df|_x  \right| = \left. \frac{d\Leb_M}{d f_* \Leb_M}\right|_x.
\]
We will measure the maximum volume contraction rate of a diffeomorphism by the following quantity:
\[
\mN(f):=\max_{x\in M} \Jac(f)|_x^{-1} = \max_{x\in M} \left. \frac{df_* \Leb_M}{d \Leb_M}\right|_x.
\]
Let $\bK_M$ be a compact subset of the space of probability measures on $\Diff^1(M)$ (equipped with the weak-$*$ convergence topology). We then have the following theorem, providing a lower estimate for the (averaged) growth of the maximum volume contraction speed:
\begin{theorem}\label{t:M}
Let $M$, $\bK_M$ satisfy the following assumptions:
\begin{itemize}
\item \textbf{(log-moment condition)}
For any $\mgr\in \bK_M$ one has
\[
\int_{\Diff^1(M)} \log \mN(f) d \mgr(f) < \infty
\]
\item \textbf{(measures condition)} For any $\mgr\in \bK_M$ there are no Borel probability measures $\msp_1$, $\msp_2$ on $M$ such that $f_*\msp_1=\msp_2$ for $\mgr$-almost every $f\in \Diff^1(M)$.
\end{itemize}
Then there exists $\qh>0$ such that for any $n$ and any $\mgr_1,\dots,\mgr_n\in\bK_M$ we have
\[
\E \log \mN(F_n) \ge n \qh,
\]
where $F_n=f_n\circ \dots \circ f_1$, and every $f_i$ is chosen independently with respect to the corresponding measure~$\mgr_i$, so that the expectation is taken over the distribution $\mgr_1\times \mgr_2\times \ldots \times \mgr_n$.

In particular, for any given sequence $(\mgr_i)_{i\in\N}$, $\mgr_i\in\bK_M$ of measures on $\Diff^1(M)$ we have
\[
\liminf_{n\to\infty} \frac{1}{n} \E \log \mN(F_n) \ge \qh>0,
\]
where $F_n=f_n\circ \dots \circ f_1$, and the expectation is taken with respect to the infinite product measure~$\prod_i \mgr_i$.
\end{theorem}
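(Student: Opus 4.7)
My plan is to prove Theorem~\ref{t:M} by contradiction, running a limiting argument that parallels the one used for Theorem~\ref{t.1} but with the projective space $\RP^{d-1}$ replaced by the compact manifold $M$, and matrix-norm growth replaced by density growth of the push-forward of the Lebesgue measure. The key observation is that $\mN(f)=\bigl\|d(f_*\Leb_M)/d\Leb_M\bigr\|_\infty$, and that $\log\mN$ is sub-additive under composition: $\log\mN(g\circ f)\le\log\mN(f)+\log\mN(g)$. In particular, if $\E\log\mN(F_n)$ is small, then the random probability measure $\msp_n:=F_{n*}\Leb_M/\Leb_M(M)$ typically has essentially bounded density with respect to $\Leb_M$.

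Assume the conclusion fails; then one can pick sequences $n_k\to\infty$ and choices $\mgr^{(k)}_1,\dots,\mgr^{(k)}_{n_k}\in\bK_M$ with $\E\log\mN(F_{n_k})/n_k\to 0$. A Fubini/pigeonhole argument applied to the sub-additive sequence yields, for each $k$, an interval $[a_k,b_k]\subset[1,n_k]$ with $b_k-a_k\to\infty$, and a realization of the maps $f_1,\dots,f_{a_k}$ of positive probability, on which the conditional expectation of $\log\mN(f_{b_k}\circ\cdots\circ f_{a_k+1})$ is still $o(b_k-a_k)$. Using weak-$*$ compactness of $\bK_M$ and of the space of probability measures on the compact manifold $M$, pass to subsequential limits to obtain a measure $\bar{\mgr}\in\bK_M$ (for example as a limit of the empirical distribution $\tfrac1{b_k-a_k}\sum_{i=a_k+1}^{b_k}\mgr^{(k)}_i$, or through repeated extraction), together with limit probability measures $\msp_1$ and $\msp_2$ on $M$ coming from $F_{a_k*}\Leb_M$ and $F_{b_k*}\Leb_M$ respectively on the good event.

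The averaged-pushforward identity $\int f_*\msp_1\,d\bar{\mgr}(f)=\msp_2$ then follows by passing to the limit in the Chapman--Kolmogorov identity connecting $\msp_{a_k}$ to $\msp_{b_k}$. To upgrade this from the averaged statement to ``$f_*\msp_1=\msp_2$ for $\bar{\mgr}$-almost every $f$,'' I would invoke the Atom Dissolving Theorem (Theorem~\ref{p.max.full}), which is precisely the tool advertised in the introduction as a non-stationary replacement for stationary-measure regularity: it rules out any non-trivial spreading of the fiberwise pushforward around $\msp_2$, and thereby forces determinism. The resulting pair $(\msp_1,\msp_2)$ together with $\bar{\mgr}$ contradicts the measures condition, yielding the desired uniform constant $\qh>0$. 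The \emph{liminf} statement for a fixed sequence of measures is then immediate, since the uniform estimate $\E\log\mN(F_n)\ge n\qh$ holds for every $n$.

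The main obstacle will be the compactness/limit step: on the good event, the density bound is only sub-exponential ($e^{o(n)}$), not uniform, so one must truncate and renormalize carefully to ensure that $\msp_1$ and $\msp_2$ are genuine probability measures on $M$ with sufficient absolute continuity to feed into the Atom Dissolving Theorem without loss of mass. Carrying this out \emph{uniformly} over the compact family $\bK_M$, rather than for a single fixed $\mgr$, is the technical heart of the argument and is what distinguishes this proof from the classical stationary case.
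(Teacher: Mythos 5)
Your proposal has a genuine gap at its core: there is no additive quantity driving the contradiction. The function $\log\mN$ is only \emph{sub}additive, $\log\mN(g\circ f)\le \log\mN(f)+\log\mN(g)$, and this inequality bounds the total from \emph{above} by the sum of the blocks. Consequently, $\E\log\mN(F_{n_k})=o(n_k)$ does \emph{not} force any middle block to have small expected $\log\mN$ -- cancellation is exactly the obstruction (e.g.\ $\mN(f^{-1}\circ f)=1$ while both factors are huge), so the Fubini/pigeonhole step does not go through. Even if it did, the limiting identity you extract, $\int f_*\msp_1\,d\bar{\mgr}(f)=\msp_2$, is true \emph{by definition} of $\msp_2:=\bar{\mgr}*\msp_1$ and carries no information; what must be shown is that the fiberwise images $f_*\msp_1$ do not spread around their average, and the Atom Dissolving Theorem~\ref{p.max.full} provides no mechanism for that upgrade (it concerns atoms of measures on $X$, is proved later in the paper, and is used for Proposition~\ref{p:proj-dissolving}, not here). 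A further minor issue: the empirical average $\frac{1}{b_k-a_k}\sum_i\mgr^{(k)}_i$ need not belong to $\bK_M$, which is compact but not assumed convex, so the measures condition cannot be applied to it.

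The paper's proof supplies precisely the missing ingredient: the nonstationary Furstenberg (Kullback--Leibler) entropy $\Phi_{\mgr}(\msp)=\E_{\mgr}\,h(f_*\msp\mid\mgr*\msp)$. This quantity (i) is \emph{exactly additive} along compositions, $\Phi_{\mgr'*\mgr}(\msp)=\Phi_{\mgr}(\msp)+\Phi_{\mgr'}(\mgr*\msp)$ (Proposition~\ref{p:add}), which is what localizes the estimate to single steps; (ii) vanishes if and only if $f_*\msp$ is deterministic, so the measures condition makes it positive; (iii) is lower semicontinuous in $(\mgr,\msp)$, whence compactness of $\bK_M$ and of the space of probability measures on $M$ gives a uniform lower bound $\qh>0$ (Corollary~\ref{c.positivity}) -- no contradiction argument or limit extraction of trajectories is needed; and (iv) satisfies $\E\log\mN(F_n)\ge \Phi_{\mgr_n*\dots*\mgr_1}(\Leb)$, since $\log\mN(F_n)\ge h((F_n)_*\Leb\mid\Leb)$ and Lemma~\ref{l:average} lets one replace the reference measure $\Leb$ by $\msp_n$. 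Summing the one-step entropies then gives $\E\log\mN(F_n)\ge n\qh$ directly. If you want to rescue a contradiction-style argument, you would still have to introduce some functional with properties (i)--(ii); without it the scheme does not close.
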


\begin{remark}
Theorem~\ref{t:M} could be considered as a generalization of the famous Baxendale Theorem~\cite{Bax}, claiming (in the stationary case) the existence of an ergodic measure with a negative volume Lyapunov exponent in the case of absence of a common invariant measure.
\end{remark}

\begin{remark}
 The conclusions of Theorems~\ref{t.1} and~\ref{t.L} in the case of products of i.i.d. random matrices correspond to the classical Furstenberg Theorem. Proposition~\ref{t:1.5} can be considered as a non-stationary analog of Proposition~II.3.3 and Corollary~IV.1.7 from~\cite{BL}, or of results from~\cite{R1}.%
\end{remark}
\begin{remark}\label{r:groups}
One can replace the assumptions of Theorem~\ref{t.1} by a more general one. Namely, instead of the measures condition, it is enough to assume that there exists $k\in \N$ such that the assumptions of absence of measures and finite unions of subspaces with deterministic image hold for some $k$ for the $k$-fold convolutions
\[
\bK^{*k} =\{ \mgr_1 * \dots * \mgr_k \mid \mgr_1 , \dots , \mgr_k \in \bK\},
\]
where $\mgr_1* \dots * \mgr_k$ is the law of the product $A_1\dots A_k$, where each $A_i$ is chosen independently w.r.t.~$\mgr_i$.

This version can be immediately reduced to the initial one (it suffices to group the matrices into finite products of length $k$). Nevertheless, it can be useful. For example, it allows to cover the case of the distributions $\mgr$ supported on just two points (that is needed to treat Anderson--Bernoulli type models).

Indeed, the measures condition never holds for a distribution supported just on two matrices $A, B\in \SL(d, \mathbb{R})$. To see that, one can take $\msp_1$ to be an invariant measure of the map $f_{B^{-1}A}:\RP^{d-1}\to \RP^{d-1}$, and notice that
 \[
 \msp_2=(f_B)_*\msp_1=(f_B)_*(f_{B^{-1}A})_*\msp_1=(f_A)_*\msp_1.
 \]
But $\bK^{*k}$ with $k>1$ generally will have lager support, and using this more general condition rectifies the situation.
\end{remark}

\begin{remark}\label{rq:notes}
It is interesting to compare the assumptions of Theorem~\ref{t.1} to other non-degeneracy assumptions that were used by different authors in the stationary setting.
\begin{enumerate}
\item The original Furstenberg assumption (support of the distribution is not contained in a compact subgroup of $\SL(d, \mathbb{R})$, and  there is no finite union of proper subspaces of~$\R^d$ that would be invariant under almost every linear map) in the case $d=2$ is equivalent to the stationary analog of the measures condition (there is no measure that would be preserved by almost every transformation). Indeed, existence of finite union of lines invariant under almost every map is equivalent to existence of an atomic measure on $\RP^1$  invariant under almost every map, and the support of the distribution is inside of a compact subgroup of $\SL(2, \mathbb{R})$  if and only if there  exists a non-atomic probability  measure on $\RP^1$  invariant under almost every map, see \cite[Lemma 3.6]{AB1}.
\item In the case $d>2$ the measures condition is weaker than the Furstenberg assumption. Notice that the conclusion of Theorem \ref{t.1} in the stationary case is also weaker than the conclusion of the classical Furstenberg Theorem. Moreover, Theorem~\ref{t.L} does not hold for arbitrary $d$ under the assumptions of Theorem~\ref{t.1} only: we provide the corresponding example in Appendix~\ref{a.1}, see Example~\ref{ex:R-4}. \item Other non-degeneracy assumptions in the stationary case were also used. For example, in~\cite{GM} the non-degeneracy assumption is given in terms of algebraic richness of the support of the distribution (to get simple Lyapunov exponents), and in \cite{GR} - in terms of strong irreducibility and  ``contracting condition'' (to get simple largest Lyapunov exponent). In both cases, the measures condition follows from those sets of assumptions.
\end{enumerate}
\end{remark}

\subsection{Atom Dissolving and Subspaces Avoidance}\label{ss.ad}

One of the key steps of the proof of Theorem~\ref{t.2} is to show that under a long random composition the probability that a given initial vector is sent to a given (hyper)plane tends to zero as the length of the composition increases. In particular, the probability that for the projectivized dynamics a point of $\RP^{d-1}$ is sent into a given point should converge to zero: the maximal weight of an atom should decrease. Though such statements are not difficult to show in the stationary setting (due to the existence of a stationary measure), they turn out to be more difficult in a non-stationary setting due to the lack of tools.

Actually, for a general non-stationary case (unavoidably, under ``measures condition'') we show that the maximal weight of an atom decreases \emph{exponentially}. We believe that this ``atoms dissolving'' statement, as well as the one for the projective maps, is of independent interest.

\begin{defi}\label{d.defMax}
Denote by $\mathfrak{Max}(\msp)$ the weight of a maximal atom of a probability measure $\msp$. In particular, if $\msp$ has no atoms, then $\mathfrak{Max}(\msp)=0$.
\end{defi}

We will also be using the following notation:
\begin{defi}
Let a group $G$ be acting on a space $X$ (we will need the cases $G=\SL(d,\R)$, $X=\RP^{d-1}$ and $G=\Diff^1(M)$, $X=M$).  For two probability measures $\mgr,\mgr'$ on $G$, let $\mgr*\mgr'$ be the law of $f g$, where $f$ and $g$ are chosen independently w.r.t. $\mgr$ and $\mgr'$ respectively. Also, for a measure $\mgr$ on $G$ and a measure $\msp$ on $X$, we let $\mgr*\msp = \E_\mgr f_* \nu$ be the law of $f(x)$, where $f\in G$ and $x\in X$ are chosen independently w.r.t. $\mgr$ and $\msp$ respectively.
\end{defi}

\begin{defi}
Let $X$ be a metric compact. For a measure $\mgr$ on the space of homeomorphisms $\Homeo(X)$, we say that there is
\begin{itemize}
\item \emph{no finite set with a deterministic image}, if there are no two finite sets $F,F'\subset X$ such that $f(F)=F'$ for $\mgr$-a.e. $f\in \Homeo(X)$;
\item \emph{no measure with a deterministic image}, if there are no two probability measures $\msp,\msp'$ on $X$ such that $f_*\msp=\msp'$ for $\mgr$-a.e. $f\in \Homeo(X)$.
\end{itemize}
\end{defi}

The first of the above mentioned statements is actually a general statement for non-stationary dynamics, ensuring the ``dissolving of atoms'': decrease of a probability of a given point being sent to any particular point.

\begin{theorem}[Atoms Dissolving]\label{p.max.full}
Let $\bK_X$ be a compact set of probability measures on $\Homeo(X)$.
\begin{itemize}
\item
Assume that for any $\mgr\in \bK_X$ there is no finite set with a deterministic image. Then for any $\eps>0$ there exists $n$ such that for any probability measure $\msp$ on $X$ and any sequence $\mgr_1, \ldots, \mgr_n\in \bK_X$ we have
$$
\mathfrak{Max}\left(\mgr_{n}* \dots* \mgr_{1}*\msp\right) <\eps.
$$
In particular, for any probability measure $\msp$ on $X$ and any sequence $\mgr_1, \mgr_2, \ldots\in \bK_X$ we have
$$
\lim_{n\to \infty}\mathfrak{Max}\left(\mgr_{n}* \dots* \mgr_{1}*\msp\right)=0.
$$
\item If, moreover, for any $\mgr\in \bK_X$ there is no measure with a deterministic image, then the convergence is exponential and uniform over all sequences $\mgr_1, \mgr_2, \ldots$ from $K^{\mathbb{N}}$ and all probability measures $\msp$. That is, there exists $\lambda<1$ such that for any $n$, any $\msp$ and any $\mgr_1,\mgr_2,\dots\in\bK_X$
\[
\mathfrak{Max}\left(\mgr_{n}* \dots* \mgr_{1}*\msp\right) < \lambda^n.
\]
\end{itemize}
\end{theorem}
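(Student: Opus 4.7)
The plan is a compactness-and-contradiction argument built on two foundational observations: $\mathfrak{Max}$ is non-increasing under convolution (since each $f \in \Homeo(X)$ is a bijection, $(\mgr*\msp)(\{y\}) = \int \msp(\{f^{-1}(y)\})\,d\mgr(f) \leq \mathfrak{Max}(\msp)$), and $\mathfrak{Max}$ is upper semi-continuous on $\mathcal{P}(X)$ in the weak-$*$ topology while $(\mgr,\msp) \mapsto \mgr*\msp$ is jointly weak-$*$ continuous (from joint continuity of the evaluation $\Homeo(X) \times X \to X$). Both $\bK_X$ and $\mathcal{P}(X)$ are weak-$*$ compact.

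For the first part, I argue by contradiction. If the conclusion fails, there exist $\eps > 0$ and, for each $n$, sequences $\mgr_1^{(n)},\dots,\mgr_n^{(n)} \in \bK_X$ and $\msp^{(n)}$ with $\mathfrak{Max}(\mgr_n^{(n)}*\dots*\mgr_1^{(n)}*\msp^{(n)}) \geq \eps$; monotonicity gives that every intermediate measure also has $\mathfrak{Max} \geq \eps$. A diagonal extraction yields an infinite sequence $(\mgr_k^\infty, \msp_k^\infty)$ with $\msp_{k+1}^\infty = \mgr_{k+1}^\infty * \msp_k^\infty$ and $\mathfrak{Max}(\msp_k^\infty) \geq \eps$ for all $k$, hence $\mathfrak{Max}(\msp_k^\infty) \searrow p_\infty \geq \eps$. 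A further tail-shift-and-extraction step then produces a sequence along which $\mathfrak{Max} \equiv p_\infty$ exactly. Letting $A_k = \{x : \msp_k^\infty(\{x\}) = p_\infty\}$ (finite, $|A_k| \leq 1/p_\infty$), the identity $p_\infty = \sum_i \msp_k^\infty(\{x_i\})\,\mgr_{k+1}^\infty(\{f: f(x_i) = y\})$ for $y \in A_{k+1}$, combined with $\msp_k^\infty(\{x_i\}) \leq p_\infty$ and $\sum_i \mgr_{k+1}^\infty(\{f: f(x_i) = y\}) \leq 1$ (disjointness of events from bijectivity), forces equality throughout, so $\mgr_{k+1}^\infty$-a.s.\ $f^{-1}(y) \in A_k$, i.e., $A_{k+1} \subseteq f(A_k)$. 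Bijectivity also gives $|A_{k+1}| \leq |A_k|$, so this positive-integer sequence stabilizes; from that point on $A_{k+1} = f(A_k)$ $\mgr_{k+1}^\infty$-a.s.\ — a finite set with deterministic image for $\mgr_{k+1}^\infty \in \bK_X$ (using weak-$*$ closedness of $\bK_X$), contradicting the hypothesis.

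The main obstacle in part 1 is the step promoting $\mathfrak{Max} \geq p_\infty$ to $\mathfrak{Max} \equiv p_\infty$: upper semi-continuity allows $\mathfrak{Max}$ to jump up in weak-$*$ limits (atoms may coalesce). I expect to handle this by iterating the tail-extraction — each round produces a higher constant-$\mathfrak{Max}$ level, the sequence of levels is bounded above by $1$, and the process must stop before reaching $1$ (since $\msp = \delta_{x_0}$ would immediately give a one-point deterministic image). For part 2, the stronger ``no measure with deterministic image'' hypothesis is used to close a parallel limiting argument applied to the exponential decay rate. If the exponential conclusion fails, then for every $r < 1$ there are arbitrarily long sequences with $\mathfrak{Max}(\msp_n) \geq r^n$; applying the same compactness and USC machinery along these sequences, one extracts in the limit a measure $\bar\msp$ and a sequence $(\bar\mgr_k^*)$ along which the random pushforward $f_*\bar\msp$ under $f \sim \bar\mgr_1^*$ is almost surely equal to the single measure $\bar\mgr_1^* * \bar\msp$ — a measure with deterministic image, contradicting the stronger hypothesis. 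The heart of this step, and the principal technical obstacle for part 2, is upgrading ``near-stationarity in average'' (forced by the failure of exponential decay combined with part 1) to the pointwise identity $f_*\bar\msp = \bar\mgr_1^* * \bar\msp$ for $\bar\mgr_1^*$-a.e.\ $f$; I expect this to require a variance-type estimate along the limit sequence that is quantitative in the decay rate.
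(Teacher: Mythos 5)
Your overall architecture (monotonicity of an ``atom functional'' under convolution, compactness of $\bK_X$ and of the space of probability measures, extraction of a limiting object with a deterministic image) is in the same spirit as the paper's, and your finite-set endgame for part 1 (the identity $p_\infty=\sum_i p_{i,j}m_i$ forcing $A_{k+1}\subseteq f(A_k)$ a.s., followed by cardinality stabilization) is correct \emph{provided} you have an infinite admissible chain along which $\mathfrak{Max}$ is exactly constant. The genuine gap is that you never produce such a chain, and the fix you sketch does not work: each tail-extraction round yields, by upper semi-continuity, a chain with all values $\ge p_\infty$ whose limit level is some $p'_\infty\ge p_\infty$, but nothing forces the new chain to be constant, nor the levels to increase strictly, nor the iteration to terminate. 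Even the sharper variant --- letting $p^*$ be the supremum of achievable limit levels and realizing it by a diagonal chain --- only yields a chain with values $\ge p^*$ decreasing to $p^*$: weak-$*$ limits give you \emph{lower} bounds on $\mathfrak{Max}$, never the upper bound needed for exact constancy. The same soft machinery cannot close part 2, where the problem is worse: a step at which $\mathfrak{Max}$ barely decreases does \emph{not} force $f_*\msp$ to be nearly deterministic (the largest atom can be preserved exactly while all the remaining mass is scattered at random), so no contradiction with ``no measure with a deterministic image'' can be extracted from the functional $\mathfrak{Max}$ alone. The ``variance-type estimate'' you defer is precisely the missing heart of the proof, not a technical afterthought.

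The paper resolves both issues by replacing $\mathfrak{Max}$ with the $\ell_2$-energy $\mE(\msp)=\sum_i m_i^2$ of the vector of atom weights. Writing $m_j'=\sum_i p_{i,j}m_i$, the variance identity gives $\mE\ge\mE'+\sum_j D_j$ with $D_j=\sum_i p_{i,j}(m_i-m_j')^2$, so near-preservation of $\mE$ kills all the variances simultaneously; this yields the key quantitative statement (Lemma~\ref{l:almost}): if $\mE-\mE'<\delta\mE$, then with $\mgr$-probability at least $1-\eps$ the squared-weight measure $\rho=\sum_i m_i^2\delta_{x_i}$ satisfies $\|f_*\rho-\rho'\|_{TV}\le\eps\mE$. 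For part 2 one shows directly that $\mE'\le(1-\delta_0)\mE$ for all admissible steps (otherwise a weak limit of the normalized $\rho$'s would be a measure with a deterministic image), whence $\mathfrak{Max}\le\mE^{1/2}\le(1-\delta_0)^{n/2}$. For part 1, since $\mE$ is monotone and trapped in $[\eps^2,1]$ along the chain, some step satisfies $\mE(\msp_{k+1})\ge\eps^{2/n}\mE(\msp_k)$; the same limiting argument produces a measure $\bar\msp$ with a deterministic image \emph{and} an atom of weight at least $\eps^2$, and its finite set of atoms of maximal weight is the desired finite set with a deterministic image. You would need to import this quantitative lemma (or prove an equivalent one) to make your argument complete; the compactness and u.s.c. observations alone are not enough.
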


This statement alone does not suffice for the proof of Theorem~\ref{t.2}, as we have to control the probability that a vector is sent into a (hyper)plane. Hence, we will need its strengthened version for a particular case of projective dynamics.

For any $m=1,\dots, d-1$, let $\Gr(m,d)$ be the set of $m$-dimensional subspaces of~$\R^d$. Also, for $L\in \Gr(m,d)$ we denote by $[L]\subset \RP^{d-1}$ the corresponding $(m-1)$-dimensional projective subspace.

\begin{prop}[Subspaces Avoidance]\label{p:proj-dissolving}
Assume that the assumptions of Theorem~\ref{t.1} hold. Moreover, assume that for some $m$ with $1\le m \le d-1,$ for any $j=1,\dots, m$ there are no measure $\mgr\in \bK$ and finite unions $U,U'$ of $j$-dimensional subspaces of $\R^d$ such that
\[
A(U)=U' \quad \text{for $\mgr$-a.e. $A\in \SL(d,\R)$}.
\]
Then for any $\eps>0$ there is a number of iterations $k_m(\eps)$ such that
\begin{multline*}
\forall k\ge k_m(\eps) \quad \forall x_0\in \RP^{d-1} \quad \forall L \in \Gr(m,d) \\
\forall \mgr_1,\dots, \mgr _k \in \bK \quad \Prob_{\mgr_1,\dots,\mgr_k} ((f_k\circ\dots\circ f_1)(x_0)\in [L])<\eps,
\end{multline*}
where $f_i$, $i=1,\dots,k$, are independent and distributed w.r.t.~$\mgr_i$.
\end{prop}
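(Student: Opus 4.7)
The plan is to argue by induction on $m$. For the base case $m=1$, a subspace $L\in\Gr(1,d)$ is a line and $[L]$ is a single point of $\RP^{d-1}$; the event $(f_k\circ\cdots\circ f_1)(x_0)\in [L]$ thus has probability equal to the atom at $[L]$ of the law $\msp_k := \mgr_k*\cdots*\mgr_1*\delta_{x_0}$ of $(f_k\circ\cdots\circ f_1)(x_0)$. The $j=1$ case of the proposition's hypothesis is exactly the ``no finite set with a deterministic image'' condition of Theorem~\ref{p.max.full} for $X=\RP^{d-1}$, and its first part gives $\mathfrak{Max}(\msp_k)<\eps$ uniformly once $k\ge k_1(\eps)$.

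For the inductive step, I would assume the conclusion uniformly for all $j'<m$ and introduce the functional $\Phi_m(\msp):=\sup_{L\in\Gr(m,d)}\msp([L])$. The recursion $\msp_k([L])=\E_{f\sim\mgr_k}[\msp_{k-1}([f^{-1}L])]$ together with the bijectivity of $f$ on $\Gr(m,d)$ immediately yields the monotonicity $\Phi_m(\msp_k)\le\Phi_m(\msp_{k-1})$. If the conclusion fails, there exist $\eps_0>0$, $k_n\to\infty$ and data $(x_0^{(n)},L^{(n)},\mgr_i^{(n)})$ with $\msp_{k_n}^{(n)}([L^{(n)}])\ge\eps_0$, so $\Phi_m(\msp_k^{(n)})\ge\eps_0$ for every $k\le k_n$. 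The crucial structural step is that the near-maximizer set
\[
\mathcal{L}_k^{(n)} := \{L\in\Gr(m,d):\msp_k^{(n)}([L])\ge\eps_0/2\}
\]
has cardinality bounded by some $N_0=N_0(\eps_0)$ as soon as $k$ is large: any two distinct $L_1,L_2\in\mathcal{L}_k^{(n)}$ intersect in a subspace of dimension $\le m-1$ and $[L_1]\cap[L_2]=[L_1\cap L_2]$, so the inductive hypothesis applied to $j'\le m-1$ makes $\msp_k^{(n)}([L_1\cap L_2])$ arbitrarily small; an inclusion-exclusion/Bonferroni estimate combined with $\msp_k^{(n)}(\RP^{d-1})\le 1$ then forces $|\mathcal{L}_k^{(n)}|\le N_0\approx 2/\eps_0$.

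To reach the contradiction I would telescope: $\sum_k(\Phi_m(\msp_{k-1}^{(n)})-\Phi_m(\msp_k^{(n)}))\le 1$, so outside a set of at most $1/\eta$ indices the one-step deficit is $\le\eta$. For such a ``good'' $k$, Markov's inequality applied to the nonnegative random variable $\Phi_m(\msp_{k-1}^{(n)})-\msp_{k-1}^{(n)}([f^{-1}L_k^*])$ (with $L_k^*$ an actual maximizer) shows that $f^{-1}(L_k^*)\in\mathcal{L}_{k-1}^{(n)}$ for $\mgr_k^{(n)}$-almost every $f$. Extracting a weak-$*$ convergent subsequence along good indices, using compactness of $\bK$, of $\Gr(m,d)$, and of the space of $\le N_0$-element subsets of $\Gr(m,d)$, then produces $\mgr_\infty\in\bK$ together with finite unions $U,U'$ of at most $N_0$ $m$-dimensional subspaces of $\R^d$ such that $f(U)=U'$ for $\mgr_\infty$-almost every $f$, contradicting the $j=m$ case of the hypothesis and completing the induction.

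The main obstacle is this final passage to the limit: Markov's inequality yields one-sided incidence (``$f^{-1}(L_k^*)\in\mathcal{L}_{k-1}^{(n)}$'') only for the true maximizer, whereas to obtain the exact set equality $f(U)=U'$ one must argue for every element of $\mathcal{L}_k^{(n)}$ simultaneously. The natural fix is to iterate the argument along a positive-density subsequence of good indices, track the joint evolution of consecutive pairs $(\mathcal{L}_{k-1}^{(n)},\mathcal{L}_k^{(n)})$, and use the bijectivity of $f$ on $\Gr(m,d)$ together with cardinality-counting to promote ``inclusion'' to ``equality'' before passing to the weak-$*$ limit on the compactified space of $\le N_0$-element subsets of $\Gr(m,d)$.
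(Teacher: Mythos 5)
Your base case and the first half of your induction step are sound: the $m=1$ case is indeed the first part of Theorem~\ref{p.max.full} applied to $X=\RP^{d-1}$, and your bound on the number of subspaces $L\in\Gr(m,d)$ with $\msp_k([L])\ge\eps_0/2$ is correct. The Bonferroni estimate --- using the inductive hypothesis (for dimension $m-1$, which also controls all lower-dimensional subspaces by inclusion) to make all pairwise intersections of small measure --- is a valid alternative to the paper's counting lemma, which instead draws $m$ independent $\msp$-random points and bounds from below the probability that they span a given heavy subspace, obtaining $N=\left\lfloor 2^{2m-1}/\eps^{m}\right\rfloor$. Both routes deliver the same structural fact.

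The genuine gap is in your concluding step, and you have correctly located it yourself: passing from ``the one-step deficit of $\Phi_m$ is small at most indices'' to an exact identity $f(U)=U'$ for finite unions of $m$-dimensional subspaces requires controlling the \emph{entire} near-maximizer set $\mathcal{L}_k^{(n)}$ under the random map, not just the true maximizer; your proposed fix (tracking consecutive pairs, promoting inclusion to equality by cardinality counting, compactifying the space of $\le N_0$-element subsets) is not carried out, and it must also contend with the facts that $|\mathcal{L}_k^{(n)}|$ fluctuates, that distinct heavy subspaces may merge in the weak-$*$ limit, and that these sets are only finite once $k$ exceeds $k_{m-1}$. The missing idea that makes all of this unnecessary is to apply the \emph{already proven} first part of Theorem~\ref{p.max.full} a second time, now to the induced action on the compact space $X=\Gr(m,d)$: the hypothesis for $j=m$ says precisely that no $\mgr\in\bK$ admits a finite subset of $\Gr(m,d)$ with a deterministic image, so there is $k''$ such that
\[
\Prob_{\mgr_{k'+1},\dots,\mgr_{k'+k''}}\bigl(G([L'])=[L'']\bigr)<\tfrac{\eps}{2N}\quad\text{for all }L',L''\in\Gr(m,d).
\]
Writing the target probability as $\E_G\,\msp\bigl(G^{-1}([L''])\bigr)$ with $\msp=\mgr_{k'}*\dots*\mgr_1*\Dirac_{x_0}$ and splitting according to whether $\msp\bigl(G^{-1}([L''])\bigr)$ is below or above $\eps/2$, the first part contributes less than $\eps/2$, and the second is bounded by $N\cdot\frac{\eps}{2N}=\eps/2$, since $G^{-1}([L''])$ must then be one of your at most $N$ heavy subspaces. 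This closes the induction directly, with no contradiction argument or limit extraction.
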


\subsection{Sketch of the proof and structure of the paper}

We start by establishing Theorem~\ref{t:M}, showing that the volume contraction rate has an exponential growth in average. To do so, we use a non-stationary version of additivity of the Furstenberg (or Kullback-Leibler) entropy; in the case when we are starting with the Lebesgue measure, the total entropy provides us with the lower bound on~$\log \mN(F_n)$. Theorem~\ref{t.1} then follows easily once one passes to the projectivized dynamics (on the sphere or on the projective plane): the norm $\|A\|$ can be rewritten in terms of the volume contraction rate~$\mN(f_{A})$ for the projectivized map (see Proposition~\ref{p:N-vs-L}). This is done in Section~\ref{s:entropy}.

Next, in Section \ref{s.LDnorms} we give a proof of Theorem~\ref{t.2}. To prove it, we divide a length~$n$ composition $T_n=A_n\dots A_1$ into a product of groups of matrices of length~$k$, with~$k$ sufficiently large (and chosen depending on the given~$\eps>0$). Now, the log-norm of the product of groups is \emph{almost} the sum of log-norms of these groups, except for the small probability that there is a ``cancelation''; the same applies to the log-length of the image of a given initial unit vector. The sum of the log-norms is a sum of independent random variables with a uniformly bounded exponential moment, thus we do have the large deviations control for these sums. The difficulty here is to control the cancellation effect: we need to show that the norm of a product is quite rarely ``substantially smaller'' than the product of norms.

In order to control the influence of cancellations, we need to control the probability that a long random nonstationary composition sends a given vector to a given direction or, more generally, to a given (hyper)plane. We use Proposition~\ref{p:proj-dissolving} for these estimates, postponing its proof until Section~\ref{s:dissolving}. Theorem~\ref{t.L} follows from Theorem~\ref{t.2} immediately. It also implies Proposition~\ref{t:1.5}, as exponential growth of norms allows to control the changes in the direction of the most contracted direction (we refer to~\cite[Theorem~8.3]{LS} here).

Finally, Section~\ref{s:dissolving} is devoted to the statements on the dissolving of atoms. We start by establishing (in Section~\ref{s:general-dissolve}) Theorem~\ref{p.max.full}. To do so, for an atomic measure we consider a vector given by its atoms weights', and take its $\ell_2$-norm. One-step iteration, that is, passing from $\msp$ to $\mgr*\msp$, corresponds to the averaging of random images~$f_*\msp$. Thus, if such a norm doesn't decrease by a linear factor, the averaged vectors are ``mostly aligned''. Considering the measure with the squared weights, normalizing them, and extracting a convergent subsequence, we find a measure with a deterministic image. This establishes the first part of the proposition. For the second one, we note that if maximal weight of atoms did not converge to zero, the limit measure with the deterministic image also contains an atom. The set of atoms of maximal weight then provides a finite set with a deterministic image.

We conclude by establishing Proposition~\ref{p:proj-dissolving} (in Section~\ref{s:planes}). To do so, we proceed by induction on the dimension $m$ of the subspace to be avoided: the base $m=1$ is given directly by Theorem~\ref{p.max.full}. The induction step is obtained by applying Theorem~\ref{p.max.full} to the induced dynamics on the Grassmanian~$\Gr(m,n)$.

In Appendix~\ref{a.1} we present an example showing that the ``spaces condition'' cannot be omitted from Theorem~\ref{t.L}; see Example~\ref{ex:R-4}.

\section{Furstenberg entropy}\label{s:entropy}

This section is devoted to the proofs of Theorems~\ref{t.1} and~\ref{t:M}. Let us first deduce the former one from the latter. To do so, let us associate to every linear map $A\in \SL(d, \mathbb{R})$ the corresponding projective map $f_A:\RP^{d-1}\to \RP^{d-1}$, where $\RP^{d-1}$ is assumed to be equipped with the standard metric (projected from the sphere~$\bS^{d-1}$). We then have the following

\begin{prop}\label{p:N-vs-L}
For any $A\in \SL(d,\R)$ one has $\mN(f_A)= \|A\|^d$.
\end{prop}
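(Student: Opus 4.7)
The plan is to lift $f_A$ to a self-map of the sphere $\bS^{d-1}$ and compute its Jacobian explicitly there. Since the double cover $\pi\colon \bS^{d-1}\to \RP^{d-1}$ is a local isometry (the standard metric on $\RP^{d-1}$ is defined as the quotient metric), the Jacobian of $f_A$ at $[v]$ agrees with the Jacobian of the lift $\tilde f_A(v)=Av/|Av|$ at $v$. So it suffices to establish $\Jac(\tilde f_A)(v)^{-1}=|Av|^d$ for every unit vector $v$, and then take the maximum.

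First I would differentiate $\tilde f_A$ at $v\in\bS^{d-1}$. A direct computation with $u:=Av/|Av|$ gives, for $w\in v^\perp\cong T_v\bS^{d-1}$,
$$
D\tilde f_A(v)\,w \;=\; \frac{1}{|Av|}\bigl(Aw-\langle Aw,u\rangle u\bigr) \;=\; \frac{1}{|Av|}\,P_{u^\perp}(Aw),
$$
so $D\tilde f_A(v)\colon v^\perp\to u^\perp$ equals $\tfrac{1}{|Av|}B$, where $B\colon v^\perp\to u^\perp$ is defined by $Bw=P_{u^\perp}(Aw)$. In particular
$$
\Jac(\tilde f_A)(v) \;=\; \frac{|\det B|}{|Av|^{d-1}}.
$$

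The remaining step is to identify $|\det B|$ in terms of $|\det A|$ and $|Av|$. Writing $A$ in block form with respect to the orthogonal decompositions $\R^d=\mathrm{span}(v)\oplus v^\perp$ in the source and $\R^d=\mathrm{span}(u)\oplus u^\perp$ in the target, the first column is $(|Av|,0)^{T}$ and the lower-right $(d-1)\times(d-1)$ block is precisely $B$. Hence $|\det A|=|Av|\cdot|\det B|$, and combining with the previous display yields
$$
\Jac(\tilde f_A)(v) \;=\; \frac{|\det A|}{|Av|^{d}}.
$$
For $A\in \SL(d,\R)$ this reduces to $\Jac(f_A)([v])=|Av|^{-d}$, so $\Jac(f_A)^{-1}([v])=|Av|^d$, and taking the maximum over unit $v$ gives $\mN(f_A)=\max_{|v|=1}|Av|^d=\|A\|^d$. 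There is no serious obstacle here; the only care needed is the linear-algebra bookkeeping that converts the two orthogonal decompositions into the block identity $|\det A|=|Av|\cdot|\det B|$.
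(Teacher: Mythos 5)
Your proof is correct and follows essentially the same route as the paper: both compute the Jacobian of the projectivized map at $[v]$ to be $|Av|^{-d}$ using $\det A=1$ and then maximize over unit vectors. The paper packages the computation as a composition of $A|_{\theta_1}$ with an oblique and then a radial projection between affine hyperplanes, while you differentiate the spherical lift directly and extract the factor $|\det B|=|Av|^{-1}$ from the block-triangular form of $A$ with respect to $\mathrm{span}(v)\oplus v^{\perp}$ and $\mathrm{span}(u)\oplus u^{\perp}$; the two bookkeepings are equivalent.
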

\begin{proof}
Indeed, consider any vector $v$ of unit length, $|v|=1$. Take the hyperplane $\theta_1:=v+\langle v \rangle^{\perp}\subset \R^d$, that is naturally identified to $T_v \bS^{d-1}$, as well as the hyperplane $\theta_2:=Av+\langle Av \rangle^{\perp}$. Consider first the composition of the restriction $A|_{\theta_1}:\theta_1\to A\theta_1$ with the projection from $A\theta_1$ to $\theta_2$ in the direction of~$Av$. The $d-1$-dimensional Jacobian of such a composition is equal to $|Av|^{-1}$ due to the preservation of volume (as $\det A=1$).

Now, take a composition of this map with a radial projection to $\theta_3=T_{Av/|Av|} \bS^{d-1}$; the latter contracts the volume $|A|^{d-1}$ times, thus we finally obtain
\[
\Jac(f|_{A})|_{[v]} = |Av|^{-1}\cdot |Av|^{d-1} = |Av|^{-d},
\]
where $[v]$ is the point of $\RP^{d-1}$ corresponding to the vector~$v$. Hence,
\[
\mN(f_{A}) = \max_{|v|=1} \Jac(f|_{A})|_{[v]}^{-1} = \max_{|v|=1} |Av|^{d} = \|A\|^d.
\]
\end{proof}

\begin{proof}[Deduction of Theorem~\ref{t.1} from Theorem~\ref{t:M}]
Assume that Theorem~\ref{t:M} holds. Now, any probability distribution $\mgr$ on $\SL(d, \mathbb{R})$ induces a probability distribution on the space of projective maps that, slightly abusing the notation, we will also denote by~$\mgr$. The assumptions on Theorem~\ref{t.1} imply the assumptions of Theorem~\ref{t:M} for the projectivized dynamics on $M=\RP^{d-1}$. Thus, for any $n$ and any $\mgr_1,\mgr_2,\dots,\mgr_n \in \bK$ we have
\[
\frac{1}{n} \log \|A_n\dots A_1\| =  \frac{1}{n} \cdot \frac{1}{d} \log \mN(f_{A_n}\dots f_{A_1}) \ge \frac{\qh}{d} =:\lambda>0,
\]
where $\qh$ is provided by the conclusion of Theorem~\ref{t:M}.
\end{proof}

Let us now pass to the preliminaries of the proof of Theorem~\ref{t:M}.

\begin{defi}
Let $\msp$ and $\tilde\msp$ be Borel probability measures on~$M$. Define {\it relative entropy} $h(\msp|\tilde\msp)$ of a measure $\msp$ with respect to a measure $\tilde\msp$ (also known as {\it the Kullback-Leibler information divergence})  as
$$
h(\msp|\tilde\msp)=\left\{
             \begin{array}{ll}
               \int_{M}\log\left(\frac{d\msp}{d\tilde\msp}\right)d\msp, & \hbox{if $\msp\ll\tilde\msp$;} \\
               +\infty, & \hbox{otherwise.}
             \end{array}
           \right.
$$
\end{defi}

It is well known (e.g. see \cite[Section~7]{KV}, or \cite[Lemma~3.1]{Bax}, or \cite[Lemma~2.1]{DV}) that $h(\msp|\msp')$ can be given also as
\begin{equation}\label{eq:h-sup}
h(\msp|\tilde\msp)=\sup_{\psi\in C(M)}\left[-\log\left(\int e^\psi d\tilde\msp\right)+\int\psi d\msp\right];
\end{equation}
this immediately implies that $h(\msp|\tilde\msp)\ge 0$, and (due to convexity of $\log x$) that $h(\msp|\tilde\msp)=0$ if and only if $\msp= \tilde\msp$. Also,~\eqref{eq:h-sup} implies that $h(\msp | \msp')$ is lower-semicontinuous in both variables (w.r.t. the weak convergence), as it can be represented as supremum of a family of continuous functions of~$\msp$ and~$\msp'$.
\begin{defi}\label{d.Furstent}
Given a Borel probability measure $\msp$ on $M$ and a probability distribution $\mgr$ on $\Diff^1(M)$, we define the \emph{Furstenberg entropy} by
\begin{equation}\label{eq:F-e}
\Phi_{\mgr}(\msp) := \int_{\Diff^1(M)} h(f_*\msp| \msp')\, d\mgr(f)\equiv\mathbb{E}_{\mgr} \left( h(f_*\msp| \msp')\right),
\end{equation}
where $\msp '=\mgr*\msp$.
\end{defi}

\begin{remark}
Notice that usually the definition of the Furstenberg entropy is formulated (see e.g.~\cite{Fur1,LL}) in the case when $\msp$ is a stationary measure for a random dynamical system defined by some distribution~$\mgr$. In that case $\msp'=\msp$.
\end{remark}

\begin{lemma}
The Furstenberg entropy $\Phi_{\mgr}(\msp)$ satisfies the following properties:
\begin{enumerate}
\item $\Phi_{\mgr}(\msp)\ge 0$, and $\Phi_{\mgr}(\msp)=0$ if and only if one has $\msp'=f_* \msp$ for $\mgr$-almost every~$f$, where  $\msp'=\mathbb{E}_{\mgr}(f_*\msp)$;
\item $\Phi_{\mgr}(\msp)$ is lower semi-continuous in both variables.
\end{enumerate}
\end{lemma}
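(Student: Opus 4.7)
For part (1), I would simply unpack Definition~\ref{d.Furstent} and invoke the corresponding properties of the Kullback--Leibler divergence already recorded right after~\eqref{eq:h-sup}. Non-negativity of $h(\cdot|\cdot)$ gives non-negativity of the expectation $\Phi_\mgr(\msp)=\mathbb{E}_\mgr\, h(f_*\msp|\msp')$; vanishing of this expectation forces $h(f_*\msp|\msp')=0$ for $\mgr$-a.e.~$f$, and the variational formula~\eqref{eq:h-sup} shows that $h(\cdot|\cdot)$ vanishes only on the diagonal. Hence $\Phi_\mgr(\msp)=0$ iff $f_*\msp=\msp'$ for $\mgr$-a.e.~$f$, which is the stated characterization.

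For part (2), the plan is a soft Skorokhod--Fatou argument. Let $(\mgr_n,\msp_n)\to(\mgr,\msp)$ in the product of the weak-$*$ topologies, and set $\msp_n'=\mgr_n*\msp_n$ and $\msp'=\mgr*\msp$. First I would check that $\msp_n'\to\msp'$ weakly: for $\phi\in C(M)$ one has $\int\phi\,d\msp_n'=\int\!\!\int \phi(f(x))\,d\msp_n(x)\,d\mgr_n(f)$, the integrand is bounded continuous on the Polish product $\Diff^1(M)\times M$, and the joint convergence $\mgr_n\otimes\msp_n\to\mgr\otimes\msp$ (standard on Polish spaces, since the product measure is uniquely determined by its marginals) delivers convergence of the integrals.

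Next, the Skorokhod representation theorem on the Polish space $\Diff^1(M)$ produces an auxiliary probability space carrying random maps $F_n\sim\mgr_n$ and $F\sim\mgr$ with $F_n\to F$ almost surely in the $C^1$ topology. For any $\phi\in C(M)$ the compositions $\phi\circ F_n$ then converge uniformly to $\phi\circ F$ on the compact $M$, which combined with $\msp_n\to\msp$ weakly gives $(F_n)_*\msp_n\to F_*\msp$ weakly almost surely. Feeding this together with $\msp_n'\to\msp'$ into the joint lower semicontinuity of $h(\cdot|\cdot)$ recorded after~\eqref{eq:h-sup}, one obtains
\[
\liminf_n h\bigl((F_n)_*\msp_n\,\big|\,\msp_n'\bigr)\;\ge\; h\bigl(F_*\msp\,\big|\,\msp'\bigr) \quad \text{almost surely}.
\]
Fatou's lemma applied to these non-negative random variables then yields
\[
\liminf_n \Phi_{\mgr_n}(\msp_n)\;=\;\liminf_n \mathbb{E}\, h\bigl((F_n)_*\msp_n\,\big|\,\msp_n'\bigr)\;\ge\; \mathbb{E}\, h\bigl(F_*\msp\,\big|\,\msp'\bigr)\;=\;\Phi_\mgr(\msp),
\]
which is the desired joint lower semicontinuity.

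The only step demanding any care is the weak continuity of the group action: one needs that if $F_n\to F$ in $C^1(M,M)$ and $\msp_n\to\msp$ weakly, then $(F_n)_*\msp_n\to F_*\msp$ weakly. As above this reduces to the uniform convergence of $\phi\circ F_n$ for $\phi\in C(M)$ on the compact manifold $M$, so there is no serious obstacle; the substance of the proof of (2) is the Skorokhod--Fatou combination.
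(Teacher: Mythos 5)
Your proposal is correct and follows the same route as the paper: both parts are derived from the corresponding properties of the relative entropy $h(\cdot|\cdot)$ recorded after~\eqref{eq:h-sup}. Part (1) is identical to the paper's argument. For part (2) the paper simply asserts that lower semicontinuity of $\Phi_\mgr(\msp)$ ``follows from lower semi-continuity of $h(\msp|\msp')$ in both variables,'' whereas you supply the mechanism that is actually needed to pass from semicontinuity of the integrand to semicontinuity of the $\mgr$-expectation when the integrating measure $\mgr$ itself varies: the weak continuity of $(\mgr,\msp)\mapsto\mgr*\msp$, a Skorokhod coupling $F_n\to F$ a.s.\ on the Polish space $\Diff^1(M)$, and Fatou's lemma for the non-negative variables $h((F_n)_*\msp_n\mid\msp_n')$. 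This is a genuine (and correctly executed) filling-in of a step the paper compresses to one sentence, not a different approach; the one point worth making explicit is that $f\mapsto h(f_*\msp\mid\msp')$ is measurable (being lower semicontinuous), so that the expectation defining $\Phi_{\mgr_n}(\msp_n)$ is well defined and equals $\mathbb{E}\,h((F_n)_*\msp_n\mid\msp_n')$ under the coupling.
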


\begin{proof}
Both properties directly follow from the properties of $h$. Indeed, one has $h(\msp|\msp')\ge 0$, the equality takes place if and only if $\msp=\msp'$, and this implies the first property. The second one follows from lower semi-continuity of $h(\msp|\msp')$ in both variables.
\end{proof}

\begin{coro}\label{c:nu}
Let us define $\Psi(\mgr):=\inf_{\msp} \Phi_{\mgr}(\msp)$. Then $\Psi(\cdot)$ is lower-semicontinuous, nonnegative, and is equal to $0$ if and only if there are measures $\msp,\msp'$ such that $f_*\msp=\msp'$ for $\mgr$-a.e. $f$.
\end{coro}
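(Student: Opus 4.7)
The plan is to extract each of the three claims (nonnegativity, lower semi-continuity, and the characterization of the zero locus) from the corresponding properties of $\Phi_\mgr(\msp)$ established in the previous lemma, combined with weak-$*$ compactness of the space $\mathcal{P}(M)$ of Borel probability measures on~$M$.

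Nonnegativity of $\Psi$ is immediate: the infimum of a family of nonnegative quantities is nonnegative. For the characterization of the zero locus, the easy direction is clear: if $f_*\msp=\msp'$ for $\mgr$-almost every $f$, then integrating against $\mgr$ gives $\msp'=\mgr*\msp$, so $h(f_*\msp\mid \msp')=h(\msp'\mid\msp')=0$ $\mgr$-almost surely, whence $\Phi_\mgr(\msp)=0$ and $\Psi(\mgr)=0$. For the converse I would assume $\Psi(\mgr)=0$ and take a minimizing sequence $\msp_n$ with $\Phi_\mgr(\msp_n)\to 0$; since $M$ is a compact manifold, $\mathcal{P}(M)$ is compact in the weak-$*$ topology, so I can extract a subsequential limit $\msp_*$. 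By lower semi-continuity of $\Phi_\mgr$ in its measure argument, $\Phi_\mgr(\msp_*)\le\liminf\Phi_\mgr(\msp_n)=0$, and then the first property of the preceding lemma produces the required pair by yielding $f_*\msp_*=\mgr*\msp_*$ for $\mgr$-almost every $f$.

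For the lower semi-continuity of $\Psi$ itself, the plan is the standard diagonal compactness argument: given $\mgr_n\to\mgr$, pass to a subsequence realizing $\liminf_n\Psi(\mgr_n)$; for each $n$ in this subsequence pick $\msp_n\in\mathcal{P}(M)$ with $\Phi_{\mgr_n}(\msp_n)\le\Psi(\mgr_n)+1/n$; extract a further subsequence $\msp_{n_k}\to\msp_*$ using compactness of $\mathcal{P}(M)$; and apply joint lower semi-continuity of $\Phi$ to obtain
\[
\Psi(\mgr)\le\Phi_\mgr(\msp_*)\le\liminf_k\Phi_{\mgr_{n_k}}(\msp_{n_k})=\liminf_n\Psi(\mgr_n).
\]

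The real point to watch — and the only place where I expect friction — is that ``lower semi-continuity in both variables'' in the preceding lemma must be read as \emph{joint} lower semi-continuity of $\Phi$ on the product of the space of laws on $\Diff^1(M)$ and $\mathcal{P}(M)$. That is exactly what the supremum representation~\eqref{eq:h-sup} of the Kullback--Leibler divergence provides (each $\psi$-term there is jointly continuous in $(\msp,\tilde\msp)$, so the sup is jointly lower semi-continuous, and passing through the integral against $\mgr$ and the continuous map $(\mgr,\msp)\mapsto \mgr*\msp$ preserves this). Mere separate lower semi-continuity in each argument would not suffice for the diagonal extraction above, so I would include a brief explicit verification of joint LSC of $\Phi$ as a preparatory remark before running the compactness argument.
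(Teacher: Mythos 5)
Your proposal is correct and follows exactly the route the paper intends: the corollary is stated as an immediate consequence of the preceding lemma, and your argument just fills in the standard details (weak-$*$ compactness of $\mathcal{P}(M)$, a minimizing sequence, and joint lower semi-continuity of $\Phi$ to pass to the limit). Your closing remark that the lemma's ``lower semi-continuous in both variables'' must be read as \emph{joint} lower semi-continuity is a fair and worthwhile observation, and your sketch of why the supremum representation~\eqref{eq:h-sup} delivers it is the right justification.
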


\begin{coro}\label{c.positivity}
In our setting (i.e. assuming that the measure condition holds), one has
\[
\qh:=\inf_{\mgr\in \bK}\inf_{\msp}\Phi_{\mgr}(\msp)>0.
\]
\end{coro}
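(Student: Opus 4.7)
The plan is to derive this corollary directly from Corollary~\ref{c:nu} combined with compactness of $\bK$. The two crucial facts we need are already recorded: (a) $\Psi(\mgr)=\inf_\msp \Phi_\mgr(\msp)$ is lower semicontinuous on the space of probability measures on $\Diff^1(M)$ (equipped with weak-$*$ topology), and (b) $\Psi(\mgr)=0$ if and only if there exist $\msp,\msp'$ with $f_*\msp=\msp'$ for $\mgr$-a.e.~$f$.

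First I would use the measures condition: for every $\mgr\in\bK$, there are no Borel probability measures $\msp,\msp'$ on $M$ with $f_*\msp=\msp'$ for $\mgr$-a.e.~$f$. By part (b) above, this means that $\Psi(\mgr)>0$ pointwise on $\bK$.

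The second step is to upgrade this pointwise positivity to a uniform lower bound. Since $\bK$ is a compact subset of the space of probability measures on $\Diff^1(M)$ and $\Psi$ is lower semicontinuous, the standard fact that a lower-semicontinuous function on a compact set attains its infimum applies: there exists $\mgr_0\in \bK$ realizing $\inf_{\mgr\in\bK}\Psi(\mgr)=\Psi(\mgr_0)$. Combined with Step 1, $\Psi(\mgr_0)>0$, so
\[
\qh=\inf_{\mgr\in \bK}\inf_{\msp}\Phi_\mgr(\msp)=\inf_{\mgr\in\bK}\Psi(\mgr)=\Psi(\mgr_0)>0,
\]
which is exactly the claim. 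There is no serious obstacle here; the entire content of the statement has been isolated in Corollary~\ref{c:nu} (whose proof is the real work, since it encapsulates both the variational characterization of the relative entropy and the semicontinuity of $\Phi_\mgr(\msp)$ in both variables together with an argument that the infimum of a jointly lower-semicontinuous function over a compact parameter space is itself lower semicontinuous). Given Corollary~\ref{c:nu}, the corollary above follows in just a few lines by the compactness-plus-lsc argument described.
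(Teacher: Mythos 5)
Your proposal is correct and is essentially identical to the paper's own proof: the paper likewise observes that $\Psi(\mgr)=\inf_{\msp}\Phi_{\mgr}(\msp)$ is a positive lower-semicontinuous function on the compact set $\bK$ (by Corollary~\ref{c:nu} and the measures condition), hence its infimum over $\bK$ is attained and positive. Nothing further is needed.
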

\begin{proof}
Indeed, $\inf_{\msp}\Phi_{\mgr}(\msp)=\Psi({\mgr})$ as a function of $\mgr\in \bK$ is a positive lower-semicontinuous function on a compact metric space, hence
\[
\inf_{\mgr\in \bK}\inf_{\msp}\Phi_{\mgr}(\msp)=\inf_{\mgr\in \bK}\Psi({\mgr})>0.
\]
\end{proof}

Note that given \emph{two} Borel probability measures $\msp,\msp'$ on $M$ and a probability distribution $\mgr$ on $\Diff^1(M)$, one can consider the expectation
\begin{equation}\label{eq:Phi-2}
\Phi_{\mgr}(\msp | \msp') :=\int h(f_*\msp| \msp')d\mgr\equiv\mathbb{E}_{\mgr} \left( h(f_*\msp| \msp')\right).
\end{equation}
The following statement holds:
\begin{lemma}\label{l:average}
\begin{equation}
\Phi_{\mgr}(\msp\mid \msp') = \Phi_{\mgr}(\msp) + h(\mgr * \msp\mid \msp');
\end{equation}
in particular, the Furstenberg entropy, where one substitutes $\msp'=\mgr*\msp$, minimizes $\Phi_{\mgr}(\msp\mid \msp')$ as one varies~$\msp'$.
\end{lemma}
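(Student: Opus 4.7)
The identity is a chain-rule-type decomposition for Kullback--Leibler divergence. Write $\bar\msp:=\mgr*\msp=\int f_*\msp\, d\mgr(f)$; the plan is to relate the Radon--Nikodym derivative $d(f_*\msp)/d\msp'$ to $d(f_*\msp)/d\bar\msp$ times $d\bar\msp/d\msp'$, integrate, and apply Fubini.

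First I would dispose of the degenerate cases. If $\bar\msp\not\ll\msp'$, pick a set $E$ with $\msp'(E)=0<\bar\msp(E)$. Then $\int f_*\msp(E)\,d\mgr(f)>0$, so $f_*\msp(E)>0$ on a $\mgr$-positive set of $f$, whence $f_*\msp\not\ll\msp'$ there and $\Phi_\mgr(\msp|\msp')=+\infty$; meanwhile $h(\bar\msp|\msp')=+\infty$ so the identity trivially holds (with both sides $+\infty$). Next, observe that $f_*\msp\ll\bar\msp$ for $\mgr$-a.e. $f$: any set $E$ with $\bar\msp(E)=0$ must satisfy $g_*\msp(E)=0$ for $\mgr$-a.e. $g$ by definition of $\bar\msp$.

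Assuming now $\bar\msp\ll\msp'$, the chain rule for Radon--Nikodym derivatives gives, for $\mgr$-a.e. $f$,
\[
\frac{d(f_*\msp)}{d\msp'}=\frac{d(f_*\msp)}{d\bar\msp}\cdot\frac{d\bar\msp}{d\msp'},\qquad f_*\msp\text{-a.e.}
\]
Taking logarithms and integrating against $f_*\msp$,
\[
h(f_*\msp|\msp')=h(f_*\msp|\bar\msp)+\int \log\frac{d\bar\msp}{d\msp'}\,d(f_*\msp).
\]
Now I would integrate this over $f$ with respect to $\mgr$. The first term integrates to $\Phi_\mgr(\msp)$ by definition \eqref{eq:F-e}. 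For the second term, Fubini (justified either by splitting the integrand into positive and negative parts and using $h(\bar\msp|\msp')\ge 0$, or by noting that the positive part is $\msp'$-integrable) yields
\[
\int_{\Diff^1(M)}\!\!\int_M\log\frac{d\bar\msp}{d\msp'}\,d(f_*\msp)\,d\mgr(f)=\int_M\log\frac{d\bar\msp}{d\msp'}\,d\bar\msp=h(\bar\msp|\msp'),
\]
giving the claimed identity.

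For the minimization statement, the identity expresses $\Phi_\mgr(\msp|\msp')$ as $\Phi_\mgr(\msp)$ (which does not depend on $\msp'$) plus the nonnegative quantity $h(\bar\msp|\msp')$, which vanishes exactly when $\msp'=\bar\msp=\mgr*\msp$. The only genuine subtlety is the integrability argument for Fubini on the second term when $\Phi_\mgr(\msp)$ may itself be $+\infty$; this is handled by separating positive and negative parts of $\log(d\bar\msp/d\msp')$ and using that both $\int(\log\frac{d\bar\msp}{d\msp'})_+\,d\bar\msp$ and $\int(\log\frac{d\bar\msp}{d\msp'})_-\,d\bar\msp$ can be treated independently via monotone/Tonelli, so the rearrangement is legitimate and the identity is an equality in $[0,+\infty]$.
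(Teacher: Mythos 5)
Your proposal is correct and follows essentially the same route as the paper: the chain rule for Radon--Nikodym derivatives $\frac{d(f_*\msp)}{d\msp'}=\frac{d(f_*\msp)}{d\bar\msp}\cdot\frac{d\bar\msp}{d\msp'}$, taking logarithms, and exchanging the order of integration so the cross term becomes $h(\bar\msp\mid\msp')$. Your additional care with the degenerate cases ($\bar\msp\not\ll\msp'$, the $\mgr$-a.e.\ absolute continuity $f_*\msp\ll\bar\msp$, and the Fubini justification via the bounded negative part) only makes explicit what the paper leaves implicit.
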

\begin{proof}
Let $\bmsp:=\mgr* \msp$; then, we have
\[
\log\frac{df_*\msp}{d\msp'} = \log\frac{df_*\msp}{d\bmsp} + \log\frac{d\bmsp}{d\msp'},
\]
and thus
\begin{multline*}
\Phi_{\mgr}(\msp\mid \msp') = \E_{\mgr} \int_M \left(\log\frac{df_*\msp}{d\msp'} \right) \, df_*\msp \\
= \E_{\mgr} \int_M \left(\log\frac{df_*\msp}{d\bmsp} \right) \, df_*\msp +
\E_{\mgr} \int_M \left(\log\frac{d\bmsp}{d\msp'} \right) \, df_*\msp  \\
= \E_{\mgr} h(f_*\msp | \bmsp ) +
\int_M \left(\log\frac{d\bmsp}{d\msp'} \right) \, d(\E_{\mgr} f_*\msp) \\
= \Phi_{\mgr}(\msp) + h(\bmsp|\msp').
\end{multline*}
\end{proof}
\begin{remark}
The statement of Lemma~\ref{l:average} can be re-formulated in terms of a random measure $\xi=f_* \msp$ (that is, a random variable taking values in the space of probability measures): it states that for such a random measure one has
\[
\E h(\xi | \msp') = \E h(\xi | \overline{\xi}) + h(\bmsp|\msp'),
\]
where $\overline{\xi}= \E \xi$ is the expectation of $\xi$ (as a vector-valued random variable),
and $\bmsp=\mgr*\msp$.
\end{remark}

Finally, the following additivity property for the nonstationary Furstenberg entropy is a key step of the proof of Theorem~\ref{t:M}:

\begin{prop}[Nonstationary additivity]\label{p:add}
Let $\msp$ be a Borel probability measure on $M$, and $\mgr, \mgr'$ be probability measures on~$\Diff^1(M)$. Then
$$
\Phi_{\mgr'*\mgr}(\msp) = \Phi_{\mgr}(\msp) +\Phi_{\mgr'}(\msp'),
$$
where $\msp'=\mgr*\msp$.
\end{prop}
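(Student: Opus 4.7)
The plan is to view $\xi := (g \circ f)_* \msp$ as a random measure on $M$ under the joint law $\mgr \times \mgr'$, whose total expectation is $(\mgr' * \mgr) * \msp = \mgr' * \msp'$ (since $\E_{f,g} g_*(f_* \msp) = \E_g g_* \msp'$ by Fubini). By definition,
\[
\Phi_{\mgr'*\mgr}(\msp) = \E_{f, g}\, h\bigl(g_*(f_* \msp) \mid \mgr' * \msp'\bigr).
\]
The key step is the standard chain-rule decomposition of the Kullback--Leibler divergence through the intermediate reference measure $g_* \msp'$ (which equals $\E_f[\xi \mid g]$):
\[
\log \frac{d(g \circ f)_* \msp}{d(\mgr' * \msp')} \;=\; \log \frac{d g_*(f_* \msp)}{d g_* \msp'} \;+\; \log \frac{d g_* \msp'}{d(\mgr' * \msp')},
\]
followed by integration against $d(g \circ f)_* \msp = d g_*(f_* \msp)$.

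For the first summand, pushforward invariance of relative entropy under the homeomorphism $g$, i.e. the change of variable $y = g^{-1}(x)$, gives
\[
\int_M \log \frac{d g_*(f_* \msp)}{d g_* \msp'}\, d g_*(f_* \msp) \;=\; \int_M \log \frac{d(f_* \msp)}{d \msp'}\, d(f_* \msp) \;=\; h(f_* \msp \mid \msp'),
\]
independently of $g$, so $\E_{f, g}$ of this summand collapses to $\Phi_\mgr(\msp)$. For the second summand, the integrand depends only on $g$, so Fubini lets us integrate in $f$ first; the measure averages to $\E_f g_*(f_* \msp) = g_* \msp'$, and the integral becomes $h(g_* \msp' \mid \mgr' * \msp')$, whose $\E_g$ is $\Phi_{\mgr'}(\msp')$. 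Summing gives the claimed identity.

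The only verification needed is that the Radon--Nikodym derivatives are well-defined: one has $f_* \msp \ll \msp'$ for $\mgr$-a.e.\ $f$ because $\msp' = \E_f f_* \msp$, and similarly $g_* \msp' \ll \mgr' * \msp'$ for $\mgr'$-a.e.\ $g$. On the remaining exceptional sets $h = +\infty$ and the identity holds in $[0, +\infty]$. I don't anticipate a serious obstacle --- the statement is essentially the two-step Markov chain version of the classical additivity of relative entropy, and the only place where the action of $\Diff^1(M)$ really enters is the invariance $h(g_* \nu_1 \mid g_* \nu_2) = h(\nu_1 \mid \nu_2)$ used to eliminate the dependence on $g$ in the first summand.
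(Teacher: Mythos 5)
Your proof is correct and follows essentially the same route as the paper's: the paper packages your chain-rule decomposition through the intermediate reference measure $g_*\msp'$ (the pushforward of $\msp'$ by the outer map) as an application of the pushforward invariance $h(f_*\msp_1\mid\msp'')=h(\msp_1\mid f_*^{-1}\msp'')$ followed by Lemma~\ref{l:average}, whose own proof is exactly your splitting of $\log$ of the Radon--Nikodym derivative plus the Fubini step $\E_f\, g_*(f_*\msp)=g_*\msp'$, so the two computations coincide term by term. The only blemish is your side remark that $\msp'=\E_f f_*\msp$ forces $f_*\msp\ll\msp'$ for $\mgr$-a.e.\ $f$ --- that implication is false in general (averaging $\delta_t$ over Lebesgue $t\in[0,1]$ yields Lebesgue measure, yet no $\delta_t$ is absolutely continuous with respect to it) --- but it is not load-bearing, since as you note the identity is read in $[0,+\infty]$, and the paper itself does not address these integrability points.
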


\begin{proof}
Note that for any measure $\msp_1$ on $M$ and
any $f\in \Diff^1(M)$ one has
\begin{equation}\label{eq:push-f}
h(f_* \msp_1 | \msp'') = h(\msp_1 | f_*^{-1} \msp'').
\end{equation}
Let $\msp''= \mathbb{E}_{\mgr'}(f_*\msp') = \mgr'*\msp'$. For any
$f\in \Diff^1(M)$ one has
\begin{multline*}
\E_{g\sim \mgr} h(f_* (g_*\msp) | \msp'') =\E_{g\sim \mgr} h(g_*\msp |f_*^{-1}  \msp'') =
\Phi_{\mgr}(\msp \mid f_*^{-1}  \msp'')
\\
=
\Phi_{\mgr}(\msp) + h(\msp' \mid f_*^{-1}\msp'' ) = \Phi_{\mgr}(\msp) + h(f_* \msp' \mid\msp'' ) ,
\end{multline*}
where the first and the last equalities are due to~\eqref{eq:push-f}, the second one is the definition of $\Phi_{\mgr}$, and the third one is due to Lemma~\ref{l:average}.

Now, taking an expectation over $f$, distributed w.r.t. $\mgr'$, one gets
\begin{multline*}
\Phi_{\mgr'*\mgr}(\msp) =
\E_{(f,g)\sim \mgr'\times \mgr} h(f_* (g_*\msp) | \msp'')
\\
=  \Phi_{\mgr}(\msp)  +  \E_{f\sim \mgr'} h(f_* \msp' | \msp'') =
 \Phi_{\mgr}(\msp) + \Phi_{\mgr'}(\msp').
\end{multline*}
completing the proof of Proposition \ref{p:add}.
\end{proof}

Now we are in a position to address Theorem~\ref{t:M}.
\begin{proof}[Proof of Theorem~\ref{t:M}]
Let the sequence of measures~$\msp_n$ on~$M$ be defined as
$$
\msp_0:=\Leb, \quad \msp_{j}=\mathbb{E}_{\mgr_{j}}f_*(\msp_{j-1})= \mgr_j*\msp_{j-1}, \quad j=1,2,\ldots
$$
This sequence is an analogue of a sequence of averaged iterations of a measure for the stationary case. Let  $F_n=f_n\circ \ldots\circ f_1$, where $f_i$ are chosen independently w.r.t.~$\mgr_i$. Recall that the maximal volume contraction rate $\mN(F_n)$ can be rewritten in terms of the Radon-Nikodym derivative for the image of the Lebesgue measure:
\[
\mN(F_n) = \max_{x\in M} \Jac(F_n)|_x^{-1} =\max_{x\in M} \left. \frac{d(F_n)_* \msp_0}{\msp_0}\right|_x ;
\]
taking the logarithm and integrating, we have
\[
\log \mN(F_n) \ge h((F_n)_* \msp_0 | \msp_0).
\]
Finally, taking the expectation, we get
\[
\E \log \mN(F_n) \ge \E h((F_n)_* \msp_0 | \msp_0) \ge \E h((F_n)_* \msp_0 | \msp_n) = \Phi_{\mgr_n*\dots*\mgr_1}(\msp_0);
\]
here the second inequality is due to the equality $\E (F_n)_* \msp_0 = \msp_n$ and Lemma~\ref{l:average}.

Now, Proposition~\ref{p:add} implies that
\begin{equation}\label{eq:Phi-n}
\Phi_{\mgr_n*\dots*\mgr_1}(\msp_0)=\sum_{j=1}^n  \Phi_{\mgr_j}(\msp_{j-1}).
\end{equation}
On the other hand, Corollary~\ref{c.positivity} claims that there exists a constant $\qh>0$ such that for all
$\mgr\in\bK_M$ and any measure $\msp$ on $M$,
\[
\Phi_{\mgr}(\msp)\ge \qh;
\]
comparing it to~\eqref{eq:Phi-n}, one gets
\[
\frac{1}{n} \Phi_{\mgr_n*\dots*\mgr_1}(\msp_0) \ge \qh.
\]
Hence,
$$
\frac{1}{n}
\E \log \mN(F_n) \ge
\frac{1}{n} \Phi_{\mgr_n*\dots*\mgr_1}(\msp_0) \ge \qh.
$$
\end{proof}

\section{Large Deviations: norms}\label{s.LDnorms}

In this section we prove Theorem~\ref{t.2}. In the proof we use Proposition~\ref{p:proj-dissolving}; the latter is proven in Section~\ref{s:planes}. Theorem~\ref{t.L} follows from Theorem~\ref{t.2} immediately due to the Borel-Cantelli argument. Finally, we assume throughout this section that the assumptions of Theorem~\ref{t.2} are satisfied.

For a given $k$ (to be chosen later) we decompose the product of matrices of length $n=km$,
$$
T_n=A_n\dots A_1,
$$
into $m$ groups of products of length $k$:
$$
T_n=(A_n\dots A_{k(m-1)+1})\dots (A_k\dots A_1)= B_m\dots B_1,
$$
where
\begin{equation}\label{eq:B-j}
B_j:=(A_{kj}\dots A_{k(j-1)+1}).
\end{equation}

It is not difficult to see that it suffices to establish the conclusion of Theorem~\ref{t.2} for the subsequence~$n_m=km$; we will formally discuss it later, while for the moment limiting our consideration to this subsequence.

We are now going to compare the log-norm $\log\|T_n\|$ with the sum of log-norms of factors, $\sum_{j=1}^m \log \|B_j\|$. Namely, one has
\[
\log\|T_n\| = \log \|B_m\dots B_1\| \le \sum_{j=1}^m \log \|B_j\|.
\]

Now, fix a unit vector $v_0$ and consider the sequence of its intermediate images $T_{jk} v_0=B_j\dots B_1 v_0$. Normalizing these vectors to the unit ones, we get a sequence of unit vectors $v_j$, starting with $v_0$ and recursively defined by
\[
v_j = \frac{B_j v_{j-1}}{|B_j v_{j-1}|}.
\]

Then, the length of the image of $v_0$ after $n$ iterations can be represented as
\[
\log |T_n v_0| = \sum_{j=1}^m \left(\log |T_{jk} v_{0}|-\log |T_{(j-1)k} v_{0}|\right) = \sum_{j=1}^m \log |B_j v_{j-1}|.
\]
Now, define
\begin{equation}\label{eq:R-def}
R_j:=\log \|B_j\| - \log |B_j v_{j-1}|,
\end{equation}
so that
\[
\sum_{j=1}^m \log |B_j v_{j-1}|  = \sum_{j=1}^m (\log \|B_j\| - R_j) = \sum_{j=1}^m \log \|B_j\| - \sum_{j=1}^m R_j.
\]

Finally, the length $|T_n v_0|$ bounds the norm $\|T_n\|$ from below. We thus finally have

\begin{equation}\label{eq:xi-S}
\sum_{j=1}^m \log \|B_j\| \ge \log \|T_n\| \ge \log |T_n v_0| =\sum_{j=1}^m \log \|B_j\| - \sum_{j=1}^m R_j.
\end{equation}

Let $\xi_j:=\log\|B_j\|$; as the products $B_j$ are independent (as random variables), so are random variables~$\xi_j$. Moreover, their exponential moments $\E e^{\gamma \xi_j}$ are uniformly bounded due to the finite moment condition~\eqref{eq:finite-moment}: they do not exceed~$C^{k}$. Thus, a standard Large Deviations Theorem is applicable to them:

\begin{lemma}\label{l:xi-LD}
For any $k$ and any $\eps'>0$ there exists $\delta'>0$, $C'$ such that
$$
\Prob(|\xi_1+\dots+\xi_m-\E(\xi_1+\dots+\xi_m)|>\eps' m) < C' e^{-\delta' m}.
$$
\end{lemma}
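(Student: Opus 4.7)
The lemma is a standard Chernoff/Cram\'er Large Deviations bound for a sum of independent random variables with uniformly bounded exponential moments, and the plan is to reduce it to that classical setup.

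First, I would establish a uniform exponential-moment bound on the variables $\xi_j = \log \|B_j\|$. Since $B_j \in \SL(d,\R)$, one has $\|B_j\| \geq 1$, hence $\xi_j \geq 0$. By submultiplicativity of the operator norm, independence of the $A_i$'s, and the finite-moment assumption~\eqref{eq:finite-moment},
\[
\E e^{\gamma \xi_j} = \E \|B_j\|^\gamma \leq \prod_{i=(j-1)k+1}^{jk} \E \|A_i\|^\gamma \leq C^k =: D,
\]
uniformly in $j$ and in the choice of measures $\mgr_i\in\bK$.

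Next, I would convert this into a uniform sub-Gaussian-type bound on the centered log-norm near zero. Because $\xi_j \ge 0$, the inequality $\xi_j^n \le (n!/\gamma^n) e^{\gamma \xi_j}$ gives $\E \xi_j^n \leq n!\,D/\gamma^n$; in particular $\mu_j := \E \xi_j \leq D/\gamma$ is uniformly bounded, and expanding the MGF in powers of $t$ yields, for some $t_0\in(0,\gamma/4]$ and some $K = K(D,\gamma) > 0$ depending only on $k$, $C$, $\gamma$,
\[
\E e^{t(\xi_j - \mu_j)} \leq e^{K t^2}\qquad \text{for all } |t| \leq t_0,
\]
uniformly in $j$. (The negative-$t$ side is automatic from $\xi_j\ge 0$, which gives $\E e^{t\xi_j}\le 1$ for $t\le 0$, combined with the uniform bound on $\mu_j$.) By independence of the $B_j$'s, the MGF of the centered sum is bounded by $e^{K t^2 m}$ on the same interval $[-t_0,t_0]$.

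Finally, the Chernoff bound gives, for any $t\in[0,t_0]$,
\[
\Prob\!\left(\sum_{j=1}^m (\xi_j - \mu_j) > \eps' m\right) \leq e^{-t \eps' m + K t^2 m},
\]
and optimising $t = \min(\eps'/(2K), t_0)$ produces an exponential decay rate $\delta' = \delta'(\eps',K,t_0) > 0$. The lower tail is handled symmetrically on $t\in[-t_0,0]$. Taking the union of the two tail bounds and absorbing the factor $2$ into the prefactor $C'$ yields the statement. The only mildly delicate point is the uniform sub-Gaussian estimate on the centered MGF; once that is in place the argument is routine Chernoff boilerplate, and all constants depend only on $k$, $C$, $\gamma$, and $\eps'$, not on the particular choice of the sequence $(\mgr_i)$ from~$\bK$.
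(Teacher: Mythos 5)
Your proof is correct and follows essentially the same route as the paper's: a uniform bound $\E e^{\gamma\xi_j}\le C^k$ from submultiplicativity and~\eqref{eq:finite-moment}, a quadratic (Bernstein-type) bound on the moment generating function near $t=0$, and a Chernoff--Markov argument for each tail. (One small quibble: the parenthetical claim that the negative-$t$ side is ``automatic'' from $\E e^{t\xi_j}\le 1$ only yields $\E e^{t(\xi_j-\mu_j)}\le e^{|t|\mu_j}$, which is linear rather than quadratic in $|t|$; but your Taylor-expansion bound is valid for $t<0$ as well, so nothing is lost.)
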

Unfortunately, for the statement in this form we could not find an exact reference (in most references the random variables are assumed either to be uniformly bounded, or identically distributed). Thus (even though the technique is very well-known), for the reader's convenience, we provide here a (standard) proof:
\begin{proof}
It suffices to use the exponential moment method and Chernoff bounds. Namely, let $\eps'>0$ be given. For any $t<\gamma$, and any random variable $\xi$ satisfying~\eqref{eq:finite-moment}, consider the expectation $\varphi_{\xi}(t):=\E e^{t\xi}$. Note that for $x>0$
\[
e^x \le 1+ x + \frac{x^2}{2} e^x;
\]
substituting $x=t\xi$ and taking the expectation, one gets
\begin{equation}\label{eq:e-xi}
\varphi_{\xi}(t)=\E e^{t\xi} \le 1 + t\cdot \E \xi + t^2 \cdot \E (\xi^2 e^{t\xi}).
\end{equation}
Now, the bound $\E e^{\gamma \xi}<C^{k}$ implies that the expectation $\E(\xi^2 e^{t\xi})$ is uniformly bounded for $t\in(0,\frac{\gamma}{2})$ by some (explicit) constant~$c$. Thus, the right hand side of~\eqref{eq:e-xi} is bounded from above by $1+t(\E\xi + tc)$. Choosing $t>0$ sufficiently small so that $tc<\frac{\eps'}{2}$, one gets
\[
\varphi_{\xi}(t) \le 1+ t \cdot(\E \xi + tc) \le 1+t\cdot(\E \xi + \frac{\eps'}{2}) < e^{t(\E \xi + \frac{\eps'}{2})}.
\]
Fix such $t$; the Markov inequality then implies that
\begin{multline*}
\Prob\left(\sum_{j=1}^m \xi_j - \sum_{j=1}^m \E \xi_j > m\eps'\right) =
\Prob\left(e^{\sum_{j=1}^m \xi_j}  > e^{\sum_{j=1}^m (\E \xi_j +\eps')} \right) \le
\\
 \le \frac{\E e^{\sum_{j=1}^m \xi_j}}{e^{\sum_{j=1}^m (\E \xi_j +\eps')} } =  \prod_{j=1}^m \frac{\varphi_{\xi_j}(t)}{e^{t(\E \xi_j +\eps')}} < (e^{-t \frac{\eps'}{2}})^m=e^{-m\cdot t \frac{\eps'}{2}}
\end{multline*}
The estimate from below is obtained in the same way by considering small negative~$t$.
\end{proof}

At the same time, a choice of a sufficiently large $k$ allows to establish large deviations-type bound for $R_j$'s. Namely, the following statement holds:
\begin{prop}\label{p:R-LD}
For any $\eps''>0$ there exists $k_1=k_1(\eps'')$, such that for any $k>k_1$ for some $\delta''>0,C''$ one has for all $n=km$
$$
\Prob(R_1+R_2+\dots+R_{m} > n\eps'') < C'' e^{-\delta'' m}.
$$
\end{prop}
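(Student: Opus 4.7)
The strategy is to establish a uniform conditional exponential moment bound on each $R_j$ and then apply Chernoff's inequality via iterated conditional expectations. Since $R_j$ is a function of $v_{j-1}$ (measurable with respect to $\mathcal{F}_{j-1} = \sigma(B_1,\dots,B_{j-1})$) and of $B_j$ (which is independent of $\mathcal{F}_{j-1}$), it suffices to prove that for some fixed $\beta_0>0$ and all $k$ sufficiently large,
\[
M_\star := \sup_{v\in \bS^{d-1},\; \mgr_1,\dots,\mgr_k \in \bK}\,\E\exp\!\bigl(\tfrac{\beta_0}{k}R(v;\mgr_1,\dots,\mgr_k)\bigr) \le e^{\beta_0\eps''/2},
\]
where $R(v;\mgr_1,\dots,\mgr_k) = \log\|B\|-\log|Bv|$ for $B = A_k\cdots A_1$ with each $A_i\sim\mgr_i$. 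Iteration of conditional expectations yields $\E\exp((\beta_0/k)\sum_j R_j) \le M_\star^m$, and Chernoff gives
\[
\Prob\!\Bigl(\sum_{j=1}^m R_j > n\eps''\Bigr) \le M_\star^m\, e^{-\beta_0 m\eps''} \le e^{-(\beta_0\eps''/2) m},
\]
as required.

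To bound $M_\star$, I would split the moment at level $\alpha k$ for a small $\alpha<\eps''/4$:
\[
\E e^{(\beta_0/k)R} \le e^{\beta_0\alpha} + \sqrt{\Prob(R > \alpha k)}\cdot\sqrt{\E e^{(2\beta_0/k)R}}
\]
by Cauchy--Schwarz. The second factor is uniformly controlled: the deterministic estimate $R \le \log\|B\| + \log\|B^{-1}\| \le d\log\|B\|$ (using $\|A^{-1}\| \le \|A\|^{d-1}$ in $\SL(d,\R)$) combined with independence and Jensen's inequality (concavity of $x\mapsto x^{p/\gamma}$ on $x\ge 1$ for $0<p\le\gamma$) gives
\[
\E e^{(2\beta_0/k)R} \le \prod_{i=1}^k \E \|A_i\|^{2d\beta_0/k} \le C^{2d\beta_0/\gamma},
\]
uniformly in $k$, provided $2d\beta_0 \le \gamma k$ (which holds for $k$ large with $\beta_0$ fixed).

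The heart of the argument is the tail estimate: for any $\alpha,\eta>0$, there exists $k_0 = k_0(\alpha,\eta)$ such that for $k\ge k_0$ and uniformly in $v$ and the measures,
\[
\Prob\bigl(R(v;\mgr_1,\dots,\mgr_k) > \alpha k\bigr) < \eta.
\]
The SVD lower bound $|Bv|/\|B\| \ge |\langle v, \phi_B\rangle|$, where $\phi_B$ is the top right singular vector of $B$, shows
\[
\{R > \alpha k\} \subset \bigl\{v \text{ is within angular distance } e^{-\alpha k} \text{ of the hyperplane } [\phi_B^\perp] \subset \RP^{d-1}\bigr\}.
\]
To reduce this to Proposition~\ref{p:proj-dissolving} (which controls landing in a \emph{fixed} hyperplane), I would combine: (a) a finite $e^{-\alpha k}$-cover of the Grassmannian $\Gr(d-1,d)$, reducing by union bound to the probability that $\phi_B$ is near a specific $[L_i]$ from the cover; and (b) a decomposition $B = B''B'$ of the block into two halves of length $k/2$, using that $[\phi_B] \approx f_{(B')^{-1}}([\phi_{B''}])$ for long products, so that conditional on $B''$, the random direction $\phi_B$ is the image of the fixed direction $\phi_{B''}$ under an independent inverse-dynamics composition of length $k/2$. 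Since the measures and spaces conditions of Theorem~\ref{t.L} are invariant under $A\mapsto A^{-1}$, Proposition~\ref{p:proj-dissolving} applies to the inverse dynamics and furnishes the required uniform bound. Combining the two factors, one picks $\alpha<\eps''/4$, then $\eta$ small so that $\sqrt\eta \cdot C^{d\beta_0/\gamma}$ is negligible, and finally $k$ large enough for the tail estimate to hold, yielding $M_\star \le e^{\beta_0\eps''/2}$.

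\textbf{Main obstacle.} The crux of the proof is the tail estimate $\Prob(R>\alpha k)<\eta$: the bad event depends on the \emph{variable} hyperplane $[\phi_B^\perp]$, while Proposition~\ref{p:proj-dissolving} gives information only about fixed hyperplanes. Bridging this gap via the Grassmannian covering and the block decomposition is the technically delicate step, requiring careful bookkeeping of the cover's mesh relative to $e^{-\alpha k}$, the number $N$ of cover elements, and the threshold $k_0$ from Proposition~\ref{p:proj-dissolving}.
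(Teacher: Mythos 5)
Your Chernoff skeleton (uniform conditional exponential moment for each block, iterated conditioning, Cauchy--Schwarz split at level $\alpha k$, Jensen for the uniform bound $\E e^{(2\beta_0/k)R}\le C^{2d\beta_0/\gamma}$) is sound and is essentially a repackaging of what the paper does. The genuine gap is in the tail estimate $\Prob(R>\alpha k)<\eta$, which you correctly identify as the crux but do not actually prove. Your bad event involves the top right singular direction $\phi_B$ of the \emph{entire} random block $B$, and the bridge you propose to Proposition~\ref{p:proj-dissolving} --- the approximation $[\phi_B]\approx f_{(B')^{-1}}([\phi_{B''}])$ for $B=B''B'$ --- is unjustified and false in general: it requires a singular value gap for $B''$ that dominates $\|B'\|\,\|(B')^{-1}\|$, and nothing in the hypotheses rules out realizations where $B''$ is nearly orthogonal (so $\phi_{B''}$ is ill-defined and $\phi_B$ is governed by $B'$ alone). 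Making this approximation hold ``with high probability'' would require precisely the kind of uniform expansion estimate you are trying to establish, so the argument as written is circular at its key step. The Grassmannian covering is also a red herring: conditioned on the past, $v=v_{j-1}$ is fixed, so the event is that the random direction $\phi_B$ lands near the single fixed hyperplane $[v^\perp]$; no union bound over a cover is needed, but the concentration question for $\phi_B$ remains.

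The paper sidesteps this entirely by a different split of the block: $B_j=\Bi_j\Bq_j$ with a \emph{short} randomizing prefix $\Bq_j$ of length $k'$ and a long suffix $\Bi_j$, combined with the subadditivity $\Theta(\Bi\Bq,v)\le\Theta(\Bi,\Bq v)+\Theta(\Bq,v)$ and the crude bound $\Theta(\Bq,v)\le d\log\|\Bq\|$ (cheap because $k'/k$ is small). The hyperplane of Lemma~\ref{l:B-L} is then attached to $\Bi_j$ \emph{alone}, hence is independent of $\Bq_j$; conditioning on $\Bi_j$ and the past, the bad event becomes ``the image of the fixed point $v_{j-1}$ under the independent random composition $\Bq_j$ lands in a $\rho$-neighborhood of a fixed hyperplane,'' which is exactly what Proposition~\ref{p:proj-dissolving} (upgraded to $\rho$-neighborhoods by compactness and semicontinuity) controls --- no singular-vector approximation is ever needed. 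If you replace your $B''B'$ heuristic with this subadditivity decomposition, your exponential-moment framework goes through.
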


Now we need to justify Proposition~\ref{p:R-LD}.
\begin{proof}[Proof of Proposition~\ref{p:R-LD}]
Let us first rewrite the definition of the random variables~$R_j$. Namely, for a given matrix $B\in \SL(d,\R)$ and a given nonzero vector $u$, consider the log-difference between the norm of $B$ and how the application of $B$ scales the vector $u$:
\begin{equation}\label{e.theta}
\Theta(B,u):=\log \|B\| - (\log |Bu| - \log |u|) =\log \|B\| - \log\left| B\frac{u}{|u|}\right|.
\end{equation}
Then,
$$
R_j=\Theta(B_j,v_{j-1}).
$$

In order to prove Proposition~\ref{p:R-LD}, we will provide upper bounds for these random variables in a way that would be suitable for Large Deviations type bounds. To do so, we divide the product $B$ of (yet unknown) length $k$ into two parts of lengths $k'$ and $k''=k-k'$. The ``relatively short'' part of length $k'$, consisting of matrices that are applied first, will serve to ``randomize'' the image of a vector, so that the ``long'' part of length $k''$ has a  high probability to expand the resulting vector ``almost as strongly as it can''. Namely, let
\[
B_j = A_{kj} \dots A_{k(j-1)+1} = \underbrace{A_{kj} \dots A_{k(j-1)+k'+1}}_{\Bi_j}
\underbrace{A_{kj+k'} \dots A_{k(j-1)+1}}_{\Bq_j} = \Bi_j \Bq_j.
\]
Note that for any unit vector $v$ and any two matrices $\Bi,\Bq\in\SL(d,\R)$,
\begin{multline*}
\Theta(\Bi\Bq,v)=\log \|\Bi\Bq\| - \log |\Bi\Bq v| \\
 \le \log \|\Bi\| +\log\|\Bq\| - \log\left|\Bi \frac{\Bq v}{|\Bq v|} \right| - \log |\Bq v|
 \\ = \Theta(\Bi,\Bq v) + \Theta(\Bq,v).
\end{multline*}
Moreover, one has
\[
\Theta(\Bq,v)=\log \|\Bq\|- \log |\Bq v| \le \log \|\Bq\| + \log \|\Bq^{-1}\| \le d \log \|\Bq\|.
\]

Hence, the sum of $R_j$'s can be bounded from above by
\begin{equation}\label{eq:R-Theta}
\sum_{j=1}^m R_j = \sum_{j=1}^m \Theta(B_j,v_{j-1}) \le \sum_{j=1}^m \Theta(\Bi_j,\Bq_j v_{j-1}) +
d \sum_{j=1}^m\log \| \Bq_j\|.
\end{equation}

Now, we have the following conditional estimate:
\begin{lemma}\label{l:B-L}
For any matrix $\Bi \in \SL(d,\R)$ there exists a hyperplane $L\in \Gr(d-1,d)$, such that for any $\rho>0$ and any unit vector~$u$ with angle at least $\rho$ with~$L$,
\[
\Theta(\Bi,u) \le |\log \sin \rho|.
\]
\end{lemma}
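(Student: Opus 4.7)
I would construct $L$ from the singular value decomposition of $\Bi$, taking it to be the orthogonal complement of the top right singular direction, and then exploit the orthogonality properties built into the SVD to lower-bound $|\Bi u|$ by $\sin\rho\cdot\|\Bi\|$.

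More precisely, the first step would be to write $\Bi = U\Sigma V^\top$ with singular values $\sigma_1\ge\sigma_2\ge\dots\ge\sigma_d>0$ (so $\|\Bi\|=\sigma_1$), let $e_1$ denote the first right singular vector (first column of $V$), and set $L:=e_1^\perp\in\Gr(d-1,d)$. Given a unit vector $u$ whose angle with $L$ is at least $\rho$, I would decompose it orthogonally as $u=\alpha e_1+w$ with $w\in L$ and $\alpha^2+|w|^2=1$; the definition of angle between a vector and a subspace immediately yields $|\alpha|\ge\sin\rho$.

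The geometric heart of the argument is the observation that $\Bi e_1$ is orthogonal to $\Bi w$: writing $u_i$ for the left singular vectors, one has $\Bi e_i=\sigma_i u_i$, and the $u_i$ are mutually orthogonal, so $\Bi(L)\perp \Bi e_1$. Pythagoras then gives
\[
|\Bi u|^2 = \alpha^2|\Bi e_1|^2 + |\Bi w|^2 \ge \alpha^2\sigma_1^2 \ge \sin^2\rho\cdot\|\Bi\|^2,
\]
and taking logarithms produces exactly $\Theta(\Bi,u)=\log\|\Bi\|-\log|\Bi u|\le -\log\sin\rho=|\log\sin\rho|$, the last equality holding on the only meaningful range $\rho\in(0,\pi/2]$.

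I do not anticipate any genuine obstacle: the lemma is essentially a direct geometric consequence of the SVD — the top singular direction and its orthogonal complement have orthogonal images — and even the condition $\det\Bi=1$ plays no role. The only mildly subtle point to check is the conversion between ``angle at least $\rho$ with $L$'' and ``$|\langle u,e_1\rangle|\ge\sin\rho$'', which follows at once from the orthogonal decomposition $u=\alpha e_1+w$.
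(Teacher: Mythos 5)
Your proposal is correct and is essentially the paper's own argument: the paper also uses the singular value decomposition (written as $\Bi = O_1\,\mathrm{diag}(\|\Bi\|,*)\,O_2$), takes $L$ to be the preimage under $O_2$ of the coordinate hyperplane $\{y_1=0\}$ (i.e.\ the orthogonal complement of the top right singular direction), and bounds $|\Bi u|\ge \|\Bi\|\sin\rho$ from the first coordinate of $O_2u$. Your extra appeal to Pythagoras via orthogonality of the images is a harmless slight elaboration of the same estimate.
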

\begin{proof}
Indeed, the matrix $\Bi$ can be represented as
\[
\Bi = O_1 \left(\begin{smallmatrix} \|\Bi\| & 0 \\ 0 & *
\end{smallmatrix}
\right)
 O_2,
\]
where $O_1,O_2$ are orthogonal transformations. We take $L_0$ to be the coordinate hyperplane $\{y_1=0\}$, and let $L:= O_2^{-1} L_0$. Then, if the angle between a unit vector~$u$ and $L$ is at least $\rho$, so is the angle between $O_2 u$ and $L_0$, and hence the absolute value of the first coordinate $y_1$ of $O_2 u$ is at least $\sin \rho$. Thus,
\[
|\Bi u| = \left| \left(\begin{smallmatrix} \|\Bi\| & 0 \\ 0 & *
\end{smallmatrix}
\right)
 O_2 u \right| = \left| \left(\begin{smallmatrix} \|\Bi\| y_1 \\  *
\end{smallmatrix}
\right) \right| \ge \|\Bi\| \sin \rho;
\]
the desired upper bound follows from the definition of~$\Theta(\Bi,u)$.
\end{proof}

Let $\eps''>0$ from the statement of Proposition~\ref{p:R-LD} be given.
Take $\eps_1>0$ sufficiently small so that
\begin{equation}\label{eq:eps-1-cond}
C\eps_1<\frac{\eps''\gamma}{12d},
\end{equation}
where the constant $C$ is the upper bound from~\eqref{eq:finite-moment}; the reason for this choice will become clear later.
By Proposition~\ref{p:proj-dissolving} there exists $k'$ such that for any point $x_0\in \RP^{d-1}$, any hyperplane $L\in \Gr(d-1,d)$ and any $\mgr_1,\dots,\mgr_{k'}\in \bK$, one has an upper bound for the probability
\[
\Prob_{\mgr_1*\dots*\mgr_{k'}} (F(x_0)\in [L]) < \frac{\eps_1}{2}.
\]
This probability can also be written as
\[
(\mgr_1*\dots*\mgr_{k'}*\delta_{x_0})([L]);
\]
the set $\bK^{k'} \times \RP^{d-1} \times \Gr(d-1,d)$ is compact, and due to the semi-continuity arguments thus there exists a positive $\rho>0$ such that
\begin{multline}\label{eq:rho-kp}
\forall \mu_1,\dots,\mu_{k'} \quad \forall x_0\in \RP^{d-1} \quad \forall L\in \Gr(d-1,d) \\
(\mgr_1*\dots*\mgr_{k'}*\delta_{x_0})(U_{\rho}([L]))<\eps_1.
\end{multline}
Fix such $\rho>0$; let us show that the conclusions of the proposition are satisfied for any $k$ large enough to ensure that
\begin{equation}\label{eq:k-choice}
\frac{|\log \sin \rho|}{k} < \frac{\eps''}{3} \quad \text{and} \quad \frac{d \log (eC)}{\gamma} \cdot  \frac{k'}{k} < \frac{\eps''}{3}.
\end{equation}

Indeed, assume that $k$ is chosen so that~\eqref{eq:k-choice} holds. For each $j=1,\dots,m$, let the hyperplane $L_j\in \Gr(d-1,d)$ correspond to $\Bi_j$ by Lemma~\ref{l:B-L}. We then decompose the sum in the right hand side of~\eqref{eq:R-Theta}, depending on whether the angle between the vector $\Bq_j v_{j-1}$ and the hyperplane $L_j$ is larger or smaller than~$\rho$; it is the latter case that corresponds to the ``strong cancellation''.
\begin{multline}\label{eq:R-3}
R_1+\dots+R_m \le \sum_{j=1}^m \Theta(\Bi_j,\Bq_j v_{j-1}) +
d \sum_{j=1}^m\log \| \Bq_j\| \\
\le m |\log \sin \rho|  + \sum_{j=1}^m \Theta(\Bi_j,\Bq_j v_{j-1}) \Ind_{\dist([\Bq_j v_{j-1}], [L_j])< \rho} + d \sum_{j=1}^m\log \| \Bq_j\| \le
\\
\le m |\log \sin \rho|  + d \sum_{j=1}^m \log \|\Bi_j \| \cdot \Ind_{\dist([\Bq_j v_{j-1}], [L_j])< \rho} + d \sum_{j=1}^m\log \| \Bq_j\|
\end{multline}
Let us show that each of the three summands in the right hand side of~\eqref{eq:R-3} is larger than $\frac{\eps''}{3}n$ with the exponentially small probability. Namely, the first one is deterministic, and we have
\[
m |\log \sin \rho| = n \, \frac{|\log \sin \rho|}{k} < \frac{\eps''}{3} n
\]
due to the first condition in~\eqref{eq:k-choice}. The last summand is the contribution of $k'$ out of each $k$ multiplied matrices.
Repeating the standard proof of the Large Deviations Theorem, we have
\[
\E \exp\left(\gamma \sum_{j=1}^m \log \|\Bq_j\|\right) = \E \left( \prod_{j=1}^m  \|\Bq_j\|^{\gamma} \right) \le C^{mk'}.
\]
Thus, by Markov inequality,
\[
\Prob \left(\gamma \sum_{j=1}^m \log \|\Bq_j\| \ge mk' \log(eC) \right)
=\Prob \left(\prod_{j=1}^m  \|\Bq_j\|^\gamma \ge C^{mk'}e^{mk'} \right)
\le e^{-mk'}=e^{-n \frac{k'}{k}},
\]
and if the event in the left hand side doesn't take place, we have
\[
d \sum_{j=1}^m \log \|\Bq_j\| < \frac{d}{\gamma} mk' \log(eC) = \frac{d k' \log (eC)}{\gamma k} \, n < \frac{\eps''}{3} n,
\]
where the last inequality is due to the second inequality in~\eqref{eq:k-choice}. Finally, let us estimate the second summand in the right hand side of~\eqref{eq:R-3}. To do so, denote
\[
\eta_j:=\log \|\Bi_j \| \cdot \Ind_{\dist([\Bq_j v_{j-1}], [L_j])< \rho},
\]
so that this summand has the form $d\sum_{j=1}^m \eta_j$. Consider the sequence of expectations
\[
\mI_j:= \E \exp \left(\frac{\gamma}{k''} \sum_{i=1}^j \eta_i \right),
\]
and recall that $\rho$ and $k'$ were chosen for~\eqref{eq:rho-kp} to hold. We have the following upper bound:
\begin{lemma}\label{l:ind}
For any $j=1,\dots,m$ one has
$\mI_j\le (1+C \eps_1) \mI_{j-1}$, where $\mI_0:=1$.
\end{lemma}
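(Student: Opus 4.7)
\textbf{Proof plan for Lemma~\ref{l:ind}.} The plan is to use the tower property of conditional expectation, conditioning on the $\sigma$-algebra $\mathcal{F}_{j-1}$ generated by $B_1,\dots,B_{j-1}$ (which determines the unit vector $v_{j-1}$ and the value of $\mI_{j-1}$), and to exploit the crucial fact that within each group $B_j = \Bi_j \Bq_j$ the factors $\Bi_j$ and $\Bq_j$ are built from disjoint blocks of independent matrices, so that $\Bi_j$ and $\Bq_j$ are independent of each other and of $\mathcal{F}_{j-1}$.

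Since $\eta_j = \log \|\Bi_j\| \cdot \Ind_{\dist([\Bq_j v_{j-1}],[L_j])<\rho}$ and $\log\|\Bi_j\|\ge 0$, I would rewrite
\[
\exp\!\left(\tfrac{\gamma}{k''}\eta_j\right) = 1 + \left(\|\Bi_j\|^{\gamma/k''}-1\right)\Ind_{\dist([\Bq_j v_{j-1}],[L_j])<\rho},
\]
and then take the conditional expectation given $\mathcal{F}_{j-1}$ by first conditioning additionally on $\Bi_j$. Given $\mathcal{F}_{j-1}$ and $\Bi_j$, the vector $v_{j-1}$ and the hyperplane $L_j$ are fixed, while $\Bq_j$ is distributed as $\mgr_{k(j-1)+k'}*\dots*\mgr_{k(j-1)+1}$; hence the indicator probability is exactly the kind of quantity bounded by~\eqref{eq:rho-kp}, so
\[
\Prob\!\left(\Bq_j v_{j-1} \in U_{\rho}([L_j]) \,\middle|\, \mathcal{F}_{j-1}, \Bi_j\right) < \eps_1.
\]
Putting this together yields
\[
\E\!\left[\exp\!\left(\tfrac{\gamma}{k''}\eta_j\right)\,\middle|\,\mathcal{F}_{j-1}\right] \le 1 + \eps_1 \cdot \E\!\left[\|\Bi_j\|^{\gamma/k''}\,\middle|\,\mathcal{F}_{j-1}\right].
\]

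The remaining point is to bound $\E\|\Bi_j\|^{\gamma/k''}$ by $C$. This follows from submultiplicativity and independence: $\|\Bi_j\|^{\gamma}\le \prod_{i}\|A_{\cdot}\|^{\gamma}$ over the $k''$ matrices making up $\Bi_j$, so by independence and~\eqref{eq:finite-moment} one has $\E\|\Bi_j\|^{\gamma}\le C^{k''}$, and Jensen's inequality applied to $x\mapsto x^{1/k''}$ gives $\E\|\Bi_j\|^{\gamma/k''}\le C$. Thus $\E[\exp(\tfrac{\gamma}{k''}\eta_j)\mid \mathcal{F}_{j-1}]\le 1+C\eps_1$. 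Since $\exp(\tfrac{\gamma}{k''}\sum_{i<j}\eta_i)$ is $\mathcal{F}_{j-1}$-measurable, the tower property delivers
\[
\mI_j = \E\!\left[\exp\!\left(\tfrac{\gamma}{k''}\sum_{i=1}^{j-1}\eta_i\right)\,\E\!\left[\exp\!\left(\tfrac{\gamma}{k''}\eta_j\right)\,\middle|\,\mathcal{F}_{j-1}\right]\right]\le (1+C\eps_1)\,\mI_{j-1},
\]
which is the claim. The only delicate step is the correct bookkeeping of conditional independence: $L_j$ depends on $\Bi_j$ alone, not on $\Bq_j$ or $\mathcal{F}_{j-1}$, which is what makes the Subspaces Avoidance estimate~\eqref{eq:rho-kp} directly applicable at the inner conditioning step. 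Everything else is standard exponential-moment bookkeeping, and the choice of $\eps_1$ via~\eqref{eq:eps-1-cond} is exactly calibrated so that iterating this one-step bound $m$ times in the next stage of the proof will produce the desired $\frac{\eps''}{3}n$ deviation rate.
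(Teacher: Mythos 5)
Your argument is correct and follows essentially the same route as the paper: the paper conditions in one step on the $\sigma$-algebra generated by $\Bi_j$ together with $A_1,\dots,A_{k(j-1)}$, which is just a repackaging of your two-stage conditioning, and it uses the slightly cruder bound $\exp(\tfrac{\gamma}{k''}\eta_j)\le 1+\|\Bi_j\|^{\gamma/k''}\Ind$ where you use the exact identity. The only addition you make is to spell out the submultiplicativity-plus-Jensen justification of $\E\|\Bi_j\|^{\gamma/k''}\le C$, which the paper asserts without detail.
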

\begin{proof}
Note first that we have the following upper bound for $\eta_j$
\begin{multline}\label{eq:exp-1}
\exp\left(\frac{\gamma}{k''} \eta_j\right)=  \exp\left(\frac{\gamma}{k''} \log \|\Bi_j \| \cdot \Ind_{\dist([\Bq_j v_{j-1}], [L_j])< \rho}\right) \\
\le 1+ \exp\left(\frac{\gamma}{k''} \log \|\Bi_j \|\right) \cdot \Ind_{\dist([\Bq_j v_{j-1}], [L_j])< \rho}
\\ = 1+ \|\Bi_j \|^{\gamma/k''} \cdot \Ind_{\dist([\Bq_j v_{j-1}], [L_j])< \rho}
\end{multline}

Let $\mF_j$ be the $\sigma$-algebra generated by $\Bi_j$ and all $A_i$, $i=1,\dots, k(j-1)$. Then~$\|\Bi_j\|$, the hyperplane $L_j$ and the vector $v_{j-1}$ are $\mF_j$-measurable, while $\Bq_j$ is independent from $\mF_j$. One can re-write $\mI_j$ as
\begin{multline}\label{eq:E-product}
\mI_j=\E \exp\left(\frac{\gamma}{k''} \sum_{i=1}^j \eta_i\right) = \E \left( \E \left(\left. \exp(\frac{\gamma}{k''} \sum_{i=1}^j \eta_i) \,\right| \, \mF_j \right)\right)
\\ = \E \left( \exp(\frac{\gamma}{k''} \sum_{i=1}^{j-1} \eta_i) \cdot \E \left( \left.\exp(\frac{\gamma}{k''} \eta_j) \,\right|\, \mF_j \right)\right).
\end{multline}
Now, due to~\eqref{eq:exp-1},
\begin{multline}
\E\left(\left.\exp(\frac{\gamma}{k''} \eta_j)  \,\right|\,  \mF_j\right) \le 1+ \E( \|\Bi_j\|^{\gamma/k''} \Ind_{\dist([\Bq_j v_{j-1}], [L_j])< \rho} \mid \mF_j) \\ = 1+\|\Bi_j\|^{\gamma/k''} \Prob (\dist([\Bq_j v_{j-1}], [L_j])< \rho) \le 1+\|\Bi_j\|^{\gamma/k''}\eps_1.
\end{multline}
The right hand side is independent from all $A_i$, $i=1,\dots, k(j-1)$, thus plugging this into~\eqref{eq:E-product}, we get the desired
\[
\mI_j \le \mI_{j-1}\cdot (1+ \eps_1 \cdot \E \|\Bi_j\|^{\gamma/k''}) < \mI_{j-1}\cdot (1+C \eps_1);
\]
here we used the inequality~$\E \|\Bi_j\|^{\gamma/k''}<C$, that follows from the moment condition~\eqref{eq:finite-moment}.
\end{proof}

Now, applying the induction argument, and using the inequality $1+x\le e^x$, from Lemma~\ref{l:ind} we get
\[
\mI_m=\E \exp\left(\frac{\gamma}{k''} \sum_{i=1}^m \eta_i\right) \le (1+C\eps_1)^m < e^{C\eps_1 m} = e^{C\eps_1 \frac{n}{k}}
\]

In particular, by Markov inequality, we have
\begin{multline}
\Prob\left( d\sum_{j=1}^m \eta_j \ge \frac{\eps''}{3} n\right) = \Prob\left( \exp\left(\frac{\gamma}{k''}\sum_{j=1}^m \eta_j \right) \ge \exp\left( \frac{\gamma}{dk''} \cdot \frac{\eps''}{3} n\right)\right) \\
\le \frac{\mI_m}{\exp(\frac{\gamma/k''}{d}\cdot \frac{\eps''}{3} n)} < \exp \left( (C\eps_1 - \frac{\gamma \eps''}{3d}) \cdot \frac{n}{k} \right) = \exp(-\delta''_1 n),
\end{multline}

where
\[
\delta''_1:= \frac{1}{k} (\frac{\gamma \eps''}{3d}-C\eps_1)
\]
is positive due to the choice of $\eps_1$.
\end{proof}

We are now ready to complete the proof of Theorem~\ref{t.2}.

\begin{proof}[Proof of Theorem~\ref{t.2}]
Let $\eps>0$ be given. Joining~\eqref{eq:xi-S} and Proposition~\ref{p:R-LD} for $\eps''=\frac{\eps}{6}$, we get that there exist $k, \delta'', C''$ such that for $n=km$
\[
\sum_{j=1}^m \xi_j \ge \log \|T_n\| \ge \log |T_n v_0| \ge \sum_{j=1}^m \xi_j - \frac{\eps}{6} n.
\]
with the probability at least $1-C''e^{-\delta''m}=1-C''e^{-\frac{1}{k}\delta''n}$.

Fix such $k$. Now, from Lemma~\ref{l:xi-LD} with $\eps'=\frac{\eps k}{6}$, we have that
\[
\Prob\left(\Big|\sum_{j=1}^m \xi_j- \sum_{j=1}^m \E(\xi_j) \Big|>\frac{\eps}{6} n\right) < C' e^{-\delta' m}= C' e^{-\frac{1}{k}\delta' n}.
\]
Hence, taking $\delta'''=\frac{1}{k}\min(\delta',\delta'')$, $C'''=C'+C''$, we get that
\begin{equation}\label{eq:T-LD}
\Prob\left(\left |\log \|T_n\| - \sum\nolimits_{j=1}^{m} \E (\xi_j) \right| > \frac{\eps}{3} n \right) < C''' e^{-\delta''' n}.
\end{equation}
and
\begin{equation}\label{eq:T-v-LD}
\Prob\left(\left |\log |T_n v_0 | - \sum\nolimits_{j=1}^{m} \E (\xi_j) \right| > \frac{\eps}{3} n \right) < C''' e^{-\delta''' n}.
\end{equation}
Now, let us estimate the difference between $L_n=\E \log \|T_n\|$ and $\sum_{j=1}^{m} \E (\xi_j)$:
\begin{multline}\label{eq:shift-L}
\left| L_n -\sum\nolimits_{j=1}^{m} \E (\xi_j) \right| \le \E \left| \log \|T_n\| - \sum\nolimits_{j=1}^{m} \E (\xi_j) \right|
\\
\le \frac{\eps}{3} n + \E \left( \left| \log \|T_n\| - \sum\nolimits_{j=1}^{m} \E (\xi_j) \right| \cdot \Ind_{\left| \log \|T_n\| - \sum\nolimits_{j=1}^{m} \E (\xi_j) \right| > \frac{\eps}{3} n}\right).
\end{multline}
Note that due to the assumption~\eqref{eq:finite-moment} the expectations $\E \log \|A_j\|$ are uniformly bounded:
\begin{lemma}\label{l:l-A}
There exists $C_A$ such that for any $\mgr\in \bK$
\[
\int \log \|A\| \, d\mgr(A) \le C_A, \quad \int \log^2 \|A\| \, d\mgr(A) \le C^2_A.
\]
\end{lemma}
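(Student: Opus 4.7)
The plan is to derive both bounds from the finite moment condition~\eqref{eq:finite-moment} via the elementary inequality that logarithms grow slower than any positive power. First I would observe that every $A\in\SL(d,\R)$ satisfies $\|A\|\ge 1$ (since $\|A\|^d\ge |\det A|=1$), so $\log\|A\|\ge 0$ and both integrands are nonnegative; there is no issue with integrability from below.

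Next, I would use the standard pointwise bound $\log x \le \frac{1}{\gamma}x^{\gamma}$, valid for all $x\ge 1$ and any $\gamma>0$ (it follows from $x^{\gamma}-1\ge \gamma\log x$ on $[1,\infty)$). Applied pointwise and integrated against $\mgr$, this gives
\[
\int \log \|A\|\, d\mgr(A) \;\le\; \frac{1}{\gamma}\int \|A\|^{\gamma}\, d\mgr(A) \;\le\; \frac{C}{\gamma},
\]
where $\gamma,C$ come from~\eqref{eq:finite-moment}, uniformly over $\mgr\in\bK$.

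For the squared logarithm, the same trick with $\gamma$ replaced by $\gamma/2$ yields $\log x\le \frac{2}{\gamma}x^{\gamma/2}$ on $[1,\infty)$, hence $\log^2 x\le \frac{4}{\gamma^2}x^{\gamma}$. Integrating gives
\[
\int \log^2 \|A\|\, d\mgr(A) \;\le\; \frac{4}{\gamma^2}\int \|A\|^{\gamma}\, d\mgr(A) \;\le\; \frac{4C}{\gamma^2}.
\]
Setting $C_A:=\max\bigl(\frac{C}{\gamma},\frac{2\sqrt{C}}{\gamma}\bigr)$ (or any sufficiently large constant) makes both inequalities hold simultaneously with the stated form.

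There is no real obstacle here: the lemma is an elementary consequence of the fact that a finite $\gamma$-moment of $\|A\|$ controls any polynomial expression in $\log\|A\|$, combined with $\|A\|\ge 1$ on $\SL(d,\R)$. The only things to be careful about are the direction of the inequality $\log x\le \frac{1}{\gamma}x^{\gamma}$ (which is why we need $\|A\|\ge 1$) and choosing $C_A$ large enough so that the stated form $C_A$ and $C_A^2$ both apply.
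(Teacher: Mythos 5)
Your proof is correct and follows essentially the same route as the paper, which simply observes that $\frac{\log x}{x^{\gamma}}$ and $\frac{\log^2 x}{x^{\gamma}}$ are uniformly bounded on $[1,\infty)$ and combines this with $\|A\|\ge 1$ and the finite moment condition; you merely make the bounding constants explicit. No issues.
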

\begin{proof}
It suffices to note that the quotients $\frac{\log x}{x^{\gamma}}$ and $\frac{\log^2 x}{x^{\gamma}}$ are uniformly bounded on $[1,\infty)$, thus so are the quotients $\frac{\log \|A\|}{\|A\|^{\gamma}}$ and $\frac{\log^2 \|A\|}{\|A\|^{\gamma}}$ for $A\in \SL(d,\R)$.
\end{proof}
Lemma~\ref{l:l-A} and the sub-additivity of the logarithm of the norm imply that
\[
\left|  \sum\nolimits_{j=1}^{m} \E (\xi_j) \right| \le C_A n, \quad \E \log \|T_n\|  \le C_A n,
\]
\[
\E ( \log \|T_n\| )^2 \le \E( \sum\nolimits_{i=1}^{n} \log \|A_i\| )^2 \le C_A^2 n^2,
\]
\[
\E \left| \log \|T_n\| - \sum\nolimits_{j=1}^{m} \E (\xi_j) \right|^2 \le 4C_A^2 n^2.
\]
Applying the Cauchy inequality for the second summand in the right hand side in~\eqref{eq:shift-L} then gives
\begin{multline}\label{eq:T-tails}
 \E \left( \left| \log \|T_n\| - \sum\nolimits_{j=1}^{m} \E (\xi_j) \right| \cdot \Ind_{\left| \log \|T_n\| - \sum\nolimits_{j=1}^{m} \E (\xi_j) \right| > \frac{\eps}{3} n}\right)  \le
 \\ \le 2C_A n \cdot (C''')^{1/2} e^{-\frac{1}{2}\delta''' n} < \const;
\end{multline}
in particular, the expectation in~\eqref{eq:T-tails} does not exceed $\frac{\eps}{6}n$ for all $n$ sufficiently large, and thus from~\eqref{eq:shift-L}
\[
\left| L_n -\sum\nolimits_{j=1}^{m} \E (\xi_j) \right| \le \frac{\eps}{3} n+ \frac{\eps}{6}n =\frac{\eps}{2} n.
\]

 Hence,~\eqref{eq:T-LD} and~\eqref{eq:T-v-LD} imply respectively for all $n=km$ sufficiently large
\begin{equation}\label{eq:T-LD-L}
\Prob \left( \left |\log \|T_n\| - L_n \right|> \frac{5\eps}{6} n \right) <C''' e^{-\delta''' n},
\end{equation}
\begin{equation}\label{eq:T-v-LD-L}
\Prob \left( \left |\log |T_n v_0| - L_n \right|> \frac{5\eps}{6} n \right) <C''' e^{-\delta''' n},
\end{equation}
thus establishing (upon taking any positive $\delta<\delta'''$) the conclusions of Theorem~\ref{t.2} for such~$n$.

For the case of a general $n$, let us write it as $n=km+i$, where $0\le i\le k-1$, and write
\[
T_n = A_{km+i} \dots A_{km+1} T_{km} = \tB \, T_{km},
\]
where $\tB:=A_{km+i} \dots A_{km+1}$. Let us estimate the norm $\|\tB\|$ and the difference $|L_n-L_{km}|$.
For the latter one, note that the increments in the sequence $L_n$ are uniformly bounded:
\begin{lemma}\label{l:L-increments}
For any sequence $\mgr_1,\mgr_2,\dots\in\bK$ and any $n$ one has
\begin{equation}
|L_n-L_{n+1}|\le (d-1)C_A.
\end{equation}
\end{lemma}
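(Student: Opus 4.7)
The plan is to bound the one-step increment $L_{n+1}-L_n$ from both sides by controlling $\|A_{n+1}^{\pm 1}\|$. The upper bound is immediate: sub-multiplicativity of the operator norm gives $\log\|T_{n+1}\|=\log\|A_{n+1}T_n\|\le \log\|A_{n+1}\|+\log\|T_n\|$, so taking expectations and using that $A_{n+1}$ is distributed according to some $\mgr_{n+1}\in\bK$ yields
\[
L_{n+1}-L_n\le \E\log\|A_{n+1}\|\le C_A
\]
by Lemma~\ref{l:l-A}.

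For the matching lower bound I would write $T_n=A_{n+1}^{-1}T_{n+1}$ and apply the same sub-multiplicative inequality in the other direction, obtaining
\[
L_n-L_{n+1}\le \E\log\|A_{n+1}^{-1}\|.
\]
At this point one needs to relate $\|A^{-1}\|$ to $\|A\|$ for $A\in\SL(d,\R)$. Using the singular value decomposition $A=U\,\mathrm{diag}(\sigma_1,\dots,\sigma_d)V$ with $\sigma_1\ge\dots\ge\sigma_d>0$ and $\prod_i\sigma_i=1$, one has $\|A\|=\sigma_1$ and $\|A^{-1}\|=1/\sigma_d=\sigma_1\sigma_2\cdots\sigma_{d-1}\le\sigma_1^{d-1}=\|A\|^{d-1}$. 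Consequently $\log\|A^{-1}\|\le (d-1)\log\|A\|$, and integrating against $\mgr_{n+1}$ gives $\E\log\|A_{n+1}^{-1}\|\le (d-1)C_A$.

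Combining the two directions, and noting that the upper estimate $C_A$ is obviously $\le (d-1)C_A$ for $d\ge 2$ (the case $d=1$ being vacuous since $\SL(1,\R)=\{1\}$), we obtain
\[
|L_{n+1}-L_n|\le (d-1)C_A,
\]
which is the claim. There is no real obstacle here; the only nonstandard ingredient is the singular-value inequality $\|A^{-1}\|\le\|A\|^{d-1}$ for determinant-one matrices, which I would state as a one-line remark rather than a separate lemma.
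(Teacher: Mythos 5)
Your proof is correct and follows essentially the same route as the paper: both use $T_{n+1}=A_{n+1}T_n$ with sub-multiplicativity in each direction, together with the inequality $\log\|A^{-1}\|\le (d-1)\log\|A\|$ for $A\in\SL(d,\R)$, and then take expectations via Lemma~\ref{l:l-A}. The only difference is that you spell out the singular-value justification of that inequality, which the paper states without proof.
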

\begin{proof}
$T_{n+1}=A_{n+1} T_n$, using the sub-multiplicativity of the norm, we get
\[
-\log \|A_{n+1}^{-1}\| \le \log \|T_{n+1}\|- \log \|T_n\| \le \log \|A_{n+1}\|.
\]
Using the inequality $\log \|A^{-1}\|\le (d-1) \log \|A\|$, and taking the expectation, we get the desired uniform bound
\[
-(d-1) C_A\le L_{n+1}-L_n \le C_A.
\]
\end{proof}
In particular, for all $n$ sufficiently large we have $|L_{n}-L_{km}|\le \frac{\eps}{12}n$, where $n=km+i$, and $i<k$.

Now, note that for $\tB$ as a product of at most $(k-1)$ independent matrices $A_j$, one has
\[
\E \| \tB\|^\gamma \le C^{k-1}.
\]
thus implying that
\[
\P\left( \log \| \tB \| > \frac{\eps}{12} n\right) = \P\left( \| \tB \|^{\gamma} > e^{\gamma\frac{\eps}{12} n}\right) \le C^{k-1} e^{-\gamma \frac{\eps}{12} n} = C_1 e^{-\delta_1 n},
\]
where $\delta_1:=\frac{\eps\gamma}{12}$.
Joining these estimates together with~\eqref{eq:T-LD-L} and~\eqref{eq:T-v-LD-L} provides the conclusion of Theorem~\ref{t.2} for a general (sufficiently large)~$n$.
\end{proof}
Notice that the standard application of Borel-Cantelli argument now shows that Theorem~\ref{t.2} implies Theorem~\ref{t.L}.

We are now ready to conclude the proof of Proposition~\ref{t:1.5}.
\begin{proof}[Proof of Proposition~\ref{t:1.5}] We start by recalling that for the dimension $d=2$, the spaces condition is implied by the measures one (see Remark~\ref{rq:notes}). Hence, all the assumptions of Theorems~\ref{t.L} and~\ref{t.2} are satisfied, and thus the conclusions of these theorems hold.

We will use Theorem~8.3 from~\cite{LS}; for the reader's convenience, we recall here its statement:
\begin{theorem*}[{\cite[Theorem~8.3]{LS}}]
Assume that one is given a sequence of matrices $A_i\in \SL(2,\R)$, such that
\begin{itemize}
\item One has \begin{equation}\label{eq:T-A-conv}
\sum\nolimits_{n=1}^{\infty}  \frac{\|A_{n+1}\|^2}{\|T_n\|^2} < \infty,
\end{equation}
where $T_n=A_n\dots A_1$;
\item for some monotone increasing function $f(n)$
\begin{equation}\label{eq:T-A-f}
\lim_{n\to\infty} \frac{\ln \|T_n\|}{f(n)} =1, \quad  \lim_{n\to\infty} \frac{\ln \|A_n\|}{f(n)} =0;
\end{equation}
\item for any $\eps>0$ one has
\begin{equation}\label{eq:exp-eps-f}
\sum_{n=1}^{\infty} e^{-\eps f(n)} <\infty.
\end{equation}
\end{itemize}
Then there exists a unit vector $u_{\infty}\in\R^2$ such that
\begin{equation}\label{eq:T-u}
\lim_{n\to\infty} \frac{\ln |T_n u_{\infty}|}{f(n)} =-1.
\end{equation}
\end{theorem*}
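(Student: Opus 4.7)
The plan is to show that the most-contracted singular directions $u_n^-$ of the partial products $T_n$ form a Cauchy sequence, take $u_\infty$ to be the limit, and verify the claimed logarithmic asymptotics by a direct computation. For each $n$, pick unit vectors $u_n^-,u_n^+\in\R^2$ with $u_n^-\perp u_n^+$, $|T_n u_n^-|=\|T_n\|^{-1}$ and $|T_n u_n^+|=\|T_n\|$ (determined up to a sign). Let $\alpha_n$ be the angle in $\RP^1$ between $u_n^-$ and $u_{n+1}^-$. The key one-step estimate is
\[
\sin\alpha_n \;\le\; \frac{\|A_{n+1}\|^2}{\|T_n\|^2}. \qquad (\ast)
\]
To prove $(\ast)$, decompose $u_{n+1}^- = \cos\alpha_n\,u_n^-\pm\sin\alpha_n\,u_n^+$, which gives $|T_n u_{n+1}^-|\ge \sin\alpha_n\cdot\|T_n\|$. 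Writing $T_n u_{n+1}^- = A_{n+1}^{-1}(T_{n+1}u_{n+1}^-)$ and using $\|A_{n+1}^{-1}\|=\|A_{n+1}\|$ for $A_{n+1}\in\SL(2,\R)$ yields $|T_n u_{n+1}^-|\le\|A_{n+1}\|\cdot\|T_{n+1}\|^{-1}$. Finally $\|T_n\|\le\|A_{n+1}\|\|T_{n+1}\|$ gives $\|T_{n+1}\|^{-1}\le\|A_{n+1}\|/\|T_n\|$; combining the three inequalities establishes $(\ast)$.

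By $(\ast)$ and hypothesis~(1), $\sum\sin\alpha_n<\infty$, so $(u_n^-)$ is Cauchy in $\RP^1$ and, after a consistent choice of signs, converges in $S^1$ to a unit vector $u_\infty$. Set $\beta_n:=\sum_{k\ge n}\alpha_k$, which bounds the angle between $u_n^-$ and $u_\infty$. To control $\sin\beta_n$ quantitatively, note that hypothesis~(2) gives, for any $\delta>0$ and all $k$ sufficiently large, $\|A_{k+1}\|^2/\|T_k\|^2\le e^{-2(1-\delta)f(k)}$. Using monotonicity of $f$ together with hypothesis~(3), for any $0<\eps'<\eps$,
\[
\sum_{k\ge n} e^{-\eps f(k)} \;\le\; e^{-(\eps-\eps')f(n)}\sum_{k\ge 1}e^{-\eps' f(k)} \;=\; C_{\eps'}\,e^{-(\eps-\eps')f(n)};
\]
applied with $\eps=2(1-\delta)$ this yields $\sin\beta_n\le C\,e^{-(2-2\delta-\eps')f(n)}$ for $n$ large.

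To conclude, write $u_\infty = \cos\beta_n\,u_n^- + \sin\beta_n\,u_n^+$ (with appropriate sign), so that
\[
|T_n u_\infty|^2 = \frac{\cos^2\beta_n}{\|T_n\|^2} + \sin^2\beta_n\cdot\|T_n\|^2.
\]
For the lower bound, $\beta_n\to 0$ gives $|T_n u_\infty|\ge \tfrac12\|T_n\|^{-1}$ for large $n$, hence by~(2), $\liminf \log|T_n u_\infty|/f(n)\ge -1$. For the upper bound, both summands above are at most $\exp(-(2-O(\delta+\eps'))f(n))$ (the first by~(2), the second by the tail estimate on $\sin\beta_n$ and~(2) applied to $\|T_n\|$), giving $\limsup \log|T_n u_\infty|/f(n)\le -(1-O(\delta+\eps'))$. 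Letting $\delta,\eps'\to 0$ forces $\lim \log|T_n u_\infty|/f(n) = -1$.

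The main obstacle is the quantitative tail bound for $\sin\beta_n$: hypothesis~(1) alone produces only qualitative convergence of $u_n^-$, whereas making the term $\sin^2\beta_n\,\|T_n\|^2$ in the final display subdominant to $\|T_n\|^{-2}$ on the logarithmic scale requires extracting an exponential-in-$f(n)$ decay rate for the tail $\sum_{k\ge n}\|A_{k+1}\|^2/\|T_k\|^2$, using both the asymptotics of~(2) and the exponential summability of~(3). Once this tail is controlled, the rest is the one-step estimate $(\ast)$ and elementary two-dimensional linear algebra.
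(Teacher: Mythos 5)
The paper does not actually prove this statement: it is quoted verbatim from \cite[Theorem~8.3]{LS} and used as a black box in the proof of Proposition~\ref{t:1.5}, so there is no internal proof to compare yours against. Judged on its own, your argument is correct and is essentially the classical proof of such results: track the most-contracted singular direction $u_n^-$ of $T_n$, establish the one-step rotation bound $\sin\alpha_n\le \|A_{n+1}\|^2/\|T_n\|^2$ (your derivation via $|T_nu_{n+1}^-|\ge \sin\alpha_n\,\|T_n\|$, the identity $\|A_{n+1}^{-1}\|=\|A_{n+1}\|$ in $\SL(2,\R)$, and submultiplicativity is sound), and then upgrade summability of the angles to exponential decay of the residual angle $\beta_n$ by combining \eqref{eq:T-A-f} with \eqref{eq:exp-eps-f} and the monotonicity of $f$; the concluding two-sided estimate on $|T_nu_\infty|^2=\cos^2\theta_n\|T_n\|^{-2}+\sin^2\theta_n\|T_n\|^2$ is then elementary. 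One detail you use implicitly and should spell out: your per-term bound $\|A_{k+1}\|^2/\|T_k\|^2\le e^{-2(1-\delta)f(k)}$ needs $\ln\|A_{k+1}\|=o(f(k))$, whereas \eqref{eq:T-A-f} only gives $\ln\|A_{k+1}\|=o(f(k+1))$, and monotonicity of $f$ bounds $f(k)$ by $f(k+1)$, i.e.\ goes the wrong way. This is rescued by noting that $f(k+1)\le(1+o(1))f(k)$, which follows from \eqref{eq:T-A-f} together with $\ln\|T_{k+1}\|\le\ln\|T_k\|+\ln\|A_{k+1}\|$ (and $f(n)\to\infty$, forced by \eqref{eq:exp-eps-f}). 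With that line added (and the routine comparison $\sin\beta_n\le\beta_n\le\tfrac{\pi}{2}\sum_{k\ge n}\sin\alpha_k$ made explicit), your tail estimate and the final limit computation go through as written.
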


Note that the function $f(n)$ is required in this theorem to be monotonously increasing.
We thus choose
\begin{equation}\label{eq:f-def}
f(n):=\min_{m\ge n} (L_m - \frac{\lambda}{2} (m-n)),
\end{equation}
where $\lambda>0$ is given by Theorem~\ref{t.1}.
We will show (see Corollary~\ref{c:L-growth} below) that this function is increasing and that the difference $|L_n-f(n)|$ is uniformly bounded. Once such a statement is established, the other assumptions are easily verified.

Indeed, the lower bound $L_n\ge n\lambda$ implies the convergence of the series~\eqref{eq:exp-eps-f}. Now, for a random sequence $A_i$ and the corresponding sequence $T_n$, the first part of the condition~\eqref{eq:T-A-f} almost surely holds due to Theorem~\ref{t.L}.

Meanwhile, for any $\eps>0$ the probability of the event $\left\{\log \|A_n\| \ge n\eps\right\}$ is bounded from above by $Ce^{-\gamma \eps n}$ due to the moments assumption~\eqref{eq:finite-moment}. Due to the Borel--Cantelli argument we have almost surely $\log \|A_n\|=o(n)$, what implies the second part of the condition~\eqref{eq:T-A-f}. Finally, having $\log \|A_n\|=o(n)$ and $\log \|T_n\|\sim L_n$, together with the lower bound $L_n\ge \lambda n$, easily implies the convergence of the series~\eqref{eq:T-A-conv}.

All the assumptions of~\cite[Theorem~8.3]{LS} are thus almost surely satisfied; hence, its conclusion holds, and one can take $\bv=u_{\infty}$. Indeed,~\eqref{eq:T-u} implies that
\[
\lim_{n\to\infty} \frac{\ln |T_n \bv| + L_n}{L_n} =0,
\]
and the upper bound $L_n=O(n)$ then implies the conclusion of Proposition~\ref{t:1.5}.

Let us show that the difference $L_n-f(n)$ is uniformly bounded. Denote for $m\ge n$
\[
T_{[n,m]}:=T_m T_n^{-1} = A_m\dots A_{n+1},
\]
and let
\[
L_{[n,m]}:= \E \log \|T_{[n,m]}\|.
\]
We then have the following
\begin{lemma}\label{l:L-add}
For any $\eps>0$ there exists $C_{\eps}$ such that
\[
\forall n \,\, \forall m\ge n \quad L_m \ge L_n + (1-\eps) L_{[n,m]} - C_{\eps}.
\]
\end{lemma}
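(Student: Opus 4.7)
The plan is to exploit the identity $T_m=T_{[n,m]}T_n$ by choosing, in a measurable way, a unit vector $u=u(A_1,\dots,A_n)$ that realizes the norm of $T_n$, i.e.\ $|T_n u|=\|T_n\|$. Setting $w:=T_n u/\|T_n\|\in \bS^{d-1}$, we get
\[
\log\|T_m\|\ \ge\ \log|T_m u|\ =\ \log\|T_n\|+\log|T_{[n,m]} w|.
\]
Since $w$ is measurable with respect to $A_1,\dots,A_n$, it is independent of the block $T_{[n,m]}=A_m\dots A_{n+1}$. Taking expectations and conditioning first on $w$,
\[
L_m\ \ge\ L_n+\E\log|T_{[n,m]} w|\ =\ L_n+\E_{w}\bigl(\E[\log|T_{[n,m]} v_0|]\big|_{v_0=w}\bigr),
\]
so the whole task reduces to proving a \emph{uniform} lower bound of the form
\[
\E \log|T_{[n,m]} v_0|\ \ge\ L_{[n,m]}-\eps''(m-n)-C'_{\eps''}\qquad\forall\, v_0\in\bS^{d-1},
\]
for any prescribed $\eps''>0$, with $C'_{\eps''}$ independent of $n,m,v_0$.

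To establish this uniform expectation bound, the idea is to upgrade the large deviations estimate of Theorem~\ref{t.2} (applied to the block distributions $\mgr_{n+1},\dots,\mgr_m$) from a probability tail bound to a one-sided expectation bound. Writing $Z:=\log|T_{[n,m]} v_0|-L_{[n,m]}$, Theorem~\ref{t.2} gives $\Prob(|Z|>\eps''(m-n))<e^{-\delta''(m-n)}$ for all $m-n$ sufficiently large, uniformly in $v_0$ and in the choice of measures. Decomposing
\[
\E Z\ =\ \E(Z\,\Ind_{|Z|\le \eps''(m-n)})+\E(Z\,\Ind_{|Z|>\eps''(m-n)}),
\]
the first term is bounded below by $-\eps''(m-n)$, and the second is controlled by Cauchy--Schwarz: $|\E(Z\Ind_{|Z|>\eps''(m-n)})|\le (\E Z^2)^{1/2}\cdot e^{-\delta''(m-n)/2}$. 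A second moment bound $\E Z^2=O((m-n)^2)$ is easy: on the one hand $|L_{[n,m]}|\le C_A(m-n)$ by Lemma~\ref{l:l-A}; on the other hand $|\log|T_{[n,m]}v_0||\le (d-1)\log\|T_{[n,m]}\|\le (d-1)\sum_{i=n+1}^m\log\|A_i\|$, so Cauchy--Schwarz on the sum together with Lemma~\ref{l:l-A} gives $\E(\log|T_{[n,m]}v_0|)^2\le (d-1)^2 C_A^2 (m-n)^2$. The second moment bound times the exponentially small square root probability is then bounded by an absolute constant $C'_{\eps''}$ (depending only on $\eps''$), yielding the desired one-sided estimate.

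Finally, to trade the linear-in-$(m-n)$ loss for a multiplicative $(1-\eps)$ factor in front of $L_{[n,m]}$, I would invoke Theorem~\ref{t.1}: since $L_{[n,m]}\ge \lambda(m-n)$, we have $\eps''(m-n)\le (\eps''/\lambda)L_{[n,m]}$. Given the target $\eps>0$, setting $\eps'':=\eps\lambda$ and $C_\eps:=C'_{\eps''}$ (with the finitely many small values of $m-n$, where Theorem~\ref{t.2} does not directly apply, absorbed into $C_\eps$ by trivial boundedness) produces
\[
L_m\ \ge\ L_n+L_{[n,m]}-\eps\, L_{[n,m]}-C_\eps\ =\ L_n+(1-\eps)L_{[n,m]}-C_\eps.
\]
The main obstacle, and the only non-routine step, is the passage from the tail bound of Theorem~\ref{t.2} to a one-sided expectation bound that is uniform over the initial vector $v_0$ and over the choice of measures; the $L^2$ moment control via the sub-additivity of $\log\|\cdot\|$ and Lemma~\ref{l:l-A} is precisely what makes this passage go through.
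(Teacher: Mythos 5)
Your argument is correct, and it is not circular: in the paper's logical order Lemma~\ref{l:L-add} appears only inside the proof of Proposition~\ref{t:1.5}, after Theorem~\ref{t.2} has been fully established (Theorem~\ref{t.2} depends on Proposition~\ref{p:proj-dissolving} and the block decomposition of Section~\ref{s.LDnorms}, not on this lemma), so you are entitled to invoke it for the shifted block $A_{n+1},\dots,A_m$; the Remark following Theorem~\ref{t.2} guarantees exactly the uniformity in $v_0$ and in the choice of measures that you need. Your reduction $L_m\ge L_n+\E\log|T_{[n,m]}w|$ via a measurable norm-attaining vector, the passage from the tail bound to a one-sided expectation bound via Cauchy--Schwarz and the $O((m-n)^2)$ second-moment estimate from Lemma~\ref{l:l-A}, and the final conversion of the additive loss $\eps''(m-n)$ into a multiplicative loss via $L_{[n,m]}\ge\lambda(m-n)$ are all sound. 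The paper takes a genuinely different and more self-contained route: it splits $T_{[n,m]}=\Bi\Bq$ with a short randomizing prefix $\Bq$ of length $k'$ supplied by Proposition~\ref{p:proj-dissolving}, applies the singular-value Lemma~\ref{l:B-L} to the long factor $\Bi$, and distinguishes whether $\Bq T_n v$ avoids the bad hyperplane of $\Bi$ by angle $\rho$; this uses only the hyperplane-avoidance probability for a \emph{single} short block rather than the full large deviations machinery, and yields an explicit constant $C_\eps=|\log\sin\rho|+2(d-1)\E\log\|\Bq\|$. What your approach buys is brevity and conceptual economy given that Theorem~\ref{t.2} is already available; what the paper's buys is independence from Theorem~\ref{t.2} (so the lemma could in principle be stated earlier) and explicit control of the constants. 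Either proof is acceptable here.
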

\begin{proof}
As in the proof of Proposition~\ref{p:R-LD}, assuming that $\eps>0$ is given, we take a sufficiently small $\eps_1$, namely,
\[
\eps_1:= \frac{\eps}{d}.
\]
Then, from Proposition~\ref{p:proj-dissolving} it follows that there exists $k'$ such that for any point $x_0\in \RP^{d-1}$, any hyperplane $L\in \Gr(d-1,d)$ and any $\mgr_1,\dots,\mgr_{k'}\in \bK$, one has an upper bound on the probability
\[
\Prob_{\mgr_1*\dots*\mgr_{k'}} (F(x_0)\in [L]) < \frac{\eps_1}{2}.
\]
Hence, there exists a positive $\rho>0$ such that
\begin{multline*}
\forall \mu_1,\dots,\mu_{k'} \quad \forall x_0\in \RP^{d-1} \quad \forall L\in \Gr(d-1,d) \\
(\mgr_1*\dots*\mgr_{k'}*\delta_{x_0})(U_{\rho}([L]))<\eps_1.
\end{multline*}
Fix such $k'$ and such $\rho$; once $k'$ is fixed, we can restrict ourselves to $m>n+k'$, as for $m\le n+ k'$ the conclusion of the lemma is satisfied for any $C_{\eps}>d C_A k'$ due to inequalities
\[
L_{[n,m]} \le  C_A (m-n), \quad L_m\ge L_n - (d-1) C_A (m-n).
\]
Denote then
\[
\Bq:=T_{[n,n+k']}, \quad \Bi:=T_{[n+k',m]}.
\]
Let the hyperplane $L\in \Gr(d-1,d)$ correspond to $\Bi$ in terms of Lemma~\ref{l:B-L}, and choose $v$ to be the (random) unit vector such that $\|T_n\|=|T_n v|$. The log-norm of the product
\[
T_m = \Bi \Bq T_n
\]
can be then estimated from below by
\begin{equation}\label{eq:lower-rho}
\log \|T_m \| \ge \log \|\Bi\| - |\log \sin \rho| -(d-1) \log \|\Bq\| + \log \|T_n\|
\end{equation}
if the angle between $L$ and $\Bq T_n v$ is at least~$\rho$, and by
\begin{equation}\label{eq:lower-not}
\log \|T_m \| \ge -(d-1)\log \|\Bi\| -(d-1) \log \|\Bq\| + \log \|T_n\|
\end{equation}
otherwise.
Taking the conditional expectation w.r.t. $T_n$ and $\Bi$, and using the fact that the conditional probability of~\eqref{eq:lower-not} is at most $\eps_1$, one gets
\[
\E (\log \| T_m \| \, \mid T_n, \Bi) \ge (1- d\eps_1) \log \|\Bi\| - |\log \sin \rho| - (d-1) \E\log \|\Bq\| + \log \|T_n\|.
\]
Joining this with the estimate
\[
\log \|\Bi\| = \log \|T_{[n,m]}\Bq^{-1}\| \ge \log \|T_{[n,m]}\| - (d-1) \log \|\Bq\|,
\]
and recalling that $d\eps_1=\eps$, we get
\begin{multline*}
L_m=\E \log \| T_m \| =
\E( \E (\log \| T_m \| \, \mid T_n, \Bi) ) \ge
\\
\ge \E \log \|T_n\| + (1- \eps) \, \E\!  \log \|T_{[n,m]}\| - |\log \sin \rho| - 2(d-1) \E\log \|\Bq\| =
\\
= L_n + (1- \eps) L_{[n,m]} - (|\log \sin \rho| + 2(d-1) \E\log \|\Bq\|).
\end{multline*}
Taking
\[
C_{\eps}:= |\log \sin \rho| + 2(d-1) \E\log \|\Bq\|
\]
completes the proof.
\end{proof}

\begin{coro}\label{c:L-growth}
The sequence $f(n)$, defined by~\eqref{eq:f-def}, is increasing, and the sequence of differences $|f(n)-L_n|$ is uniformly bounded.
\end{coro}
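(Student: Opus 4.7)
The plan has two parts, neither of which requires new ideas beyond what has already been set up: first I would establish monotonicity of $f$ from the self-referential form of its definition, and then derive the uniform bound $|f(n)-L_n|\le \const$ by feeding the linear lower bound of Theorem~\ref{t.1} into Lemma~\ref{l:L-add}.

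For monotonicity, I would split the minimum in the definition $f(n)=\min_{m\ge n}(L_m-\tfrac{\lambda}{2}(m-n))$ according to whether $m=n$ or $m\ge n+1$, obtaining
\[
f(n) \;=\; \min\!\Big(L_n,\; \min_{m\ge n+1}\big(L_m - \tfrac{\lambda}{2}(m-n)\big)\Big) \;=\; \min\!\big(L_n,\; f(n+1)-\tfrac{\lambda}{2}\big),
\]
where the second equality uses $\min_{m\ge n+1}(L_m-\tfrac{\lambda}{2}(m-n)) = \tfrac{\lambda}{2}+\min_{m\ge n+1}(L_m-\tfrac{\lambda}{2}(m-n-1)) = \tfrac{\lambda}{2}+f(n+1)$. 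In particular $f(n)\le f(n+1)-\tfrac{\lambda}{2}$, so $f$ is strictly increasing, with slope at least $\lambda/2$.

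For the uniform bound, the upper estimate $f(n)\le L_n$ is immediate by taking $m=n$ in the minimum. For the matching lower estimate, I would apply Lemma~\ref{l:L-add} with $\eps=\tfrac{1}{2}$: there exists $C:=C_{1/2}$ such that for all $m\ge n$,
\[
L_m \;\ge\; L_n + \tfrac{1}{2}\, L_{[n,m]} - C.
\]
Theorem~\ref{t.1}, applied to the tail sequence $\mgr_{n+1},\mgr_{n+2},\ldots$ (which lies in $\bK$), yields $L_{[n,m]}\ge \lambda(m-n)$. Substituting gives $L_m-\tfrac{\lambda}{2}(m-n)\ge L_n-C$ for every $m\ge n$, and taking the infimum over $m$ yields $f(n)\ge L_n-C$. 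Combined with $f(n)\le L_n$ this proves $|f(n)-L_n|\le C$, completing the proof.

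I do not expect any serious obstacle: the main point is to choose the parameter $\eps$ in Lemma~\ref{l:L-add} so that the coefficient $(1-\eps)\lambda$ of $(m-n)$ produced by Theorem~\ref{t.1} still dominates the subtracted $\lambda/2$, which is why $\eps=\tfrac{1}{2}$ is a natural choice (any $\eps\le \tfrac{1}{2}$ would do, at the price of the corresponding $C_{\eps}$). The whole argument is essentially a two-line manipulation once Lemma~\ref{l:L-add} is in hand.
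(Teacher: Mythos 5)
Your proof is correct and follows essentially the same route as the paper: monotonicity with increment at least $\lambda/2$ directly from the definition of $f$, and the two-sided bound $L_n\ge f(n)\ge L_n-C_{1/2}$ obtained by combining Lemma~\ref{l:L-add} with $\eps=\tfrac12$ and the lower bound $L_{[n,m]}\ge\lambda(m-n)$ from Theorem~\ref{t.1}. No issues.
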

\begin{proof}
Directly from the definition one has
\[
f(n+1)-f(n)\ge \frac{\lambda}{2}>0,
\]
thus implying that $f(n)$ is increasing. On the other, let us apply Lemma~\ref{l:L-add} with $\eps=\frac{1}{2}$; we have that $L_{[n,m]}\ge \lambda (m-n)$, and thus
\[
L_m - \frac{\lambda}{2} (m-n) \ge L_n  - C_{\eps}.
\]
Hence, $L_n \ge f(n) \ge L_n - C_{\eps}$.
\end{proof}
Corollary~\ref{c:L-growth} allows to use Theorem~8.3 from~\cite{LS} and therefore completes the proof of Proposition~\ref{t:1.5}.
\end{proof}

\section{Dissolving the atoms}\label{s:dissolving}
\subsection{General case: a point avoids a point}\label{s:general-dissolve}

This section is devoted to the proof of Theorem~\ref{p.max.full}.

 First, let us introduce some notations. Given measures $\msp$ on $X$ and $\mgr\in \mathbf{K}_X$, denote $\msp'=\mathbb{E}_{\mgr}f_*\msp=\mgr*\msp$.
 Let $x_1,x_2,\dots$ be the atoms of $\msp$ of weights $m_1,m_2,\dots,$ respectively. In the same way, let $y_1,y_2,\dots$ be the atoms of $\msp'$ of weights $m_1',m_2',\dots,$ respectively. Let $p_{i,j}$ be the $\mgr$-probability that the atom $x_i$ is sent to the atom $y_j$. Finally, we formally set $p_{0,j}$ to be the probability that the preimage of $y_j$ is not an atom, and denote $m_0:=0$. It is easy to see that for any $j=1, 2, \ldots$ we have
 $$
 \quad m_j'= \sum_{i\ge 0} p_{i,j} m_i, \quad \sum_{i\ge 0} p_{i,j} =1,
 $$
 and for any $i=1, 2, \ldots$ we have
 $$
 \quad \sum_{j\ge 1} p_{i,j} \le 1.
 $$

Consider then the sums of squares of these weights
$$
\mE:=\sum_i m_i^2, \quad \mE':=\sum_j m_j'^2.
$$
Consider also the variances
$$
D_j:=\sum_{i\ge 0} p_{i,j} (m_i - m_j')^2.
$$
Notice that $D_j$ is indeed the variance of a random variable $\rxi_j$ that takes the value~$m_i$ with the probability $p_{i,j}$. Then, the standard identity implies that
$$
m_j'^2 + D_j = (\E \rxi_j)^2 + Var(\rxi_j) = \E (\rxi_j^2) = \sum_i p_{i,j} m_i^2
$$
Summing over $j$, we get
$$
\mE'+ \sum_j D_j = \sum_j \sum_i p_{i,j} m_i^2 =\sum_i \left(\sum_j p_{i,j}\right) m_i^2 \le \sum_i m_i^2 =\mE.
$$
Denoting $D:=\sum_j D_j$, we thus get
\begin{equation}\label{e.zd}
\mE\ge
\mE'+D.
\end{equation}
Consider also the non-probability measures
$$
\rho:=\sum_i m_i^2 \delta_{x_i}, \quad \rho':=\sum_j m_j'^2 \delta_{y_j}.
$$
The following statement holds:
\begin{lemma}\label{l:almost}
For any $\eps>0$ there exists $\delta>0$ such that for any $\mgr\in \bK_X$ and any measure $\msp$ on~$X$, if ${\mE-\mE'}<\delta{\mE}$, then with the $\mgr$-probability at least $1-\eps$ the total variation $\|f_*(\rho)-\rho'\|_{TV}$ does not exceed $\eps \mE$.
\end{lemma}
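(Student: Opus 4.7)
The plan is to compute $\|f_*\rho - \rho'\|_{TV}$ explicitly, exploiting that $f \in \Homeo(X)$ is a bijection. Setting
\[
Z_j(f) := (f_*\rho)(\{y_j\}) = \sum_i m_i^2\, \mathbf{1}_{f(x_i) = y_j}, \qquad q_i(f) := \mathbf{1}_{f(x_i) \notin \{y_k : k \ge 1\}},
\]
injectivity of $f$ gives the decomposition
\[
\|f_*\rho - \rho'\|_{TV} = \sum_j |Z_j(f) - (m_j')^2| + \sum_i m_i^2\, q_i(f),
\]
since for each $j$ at most one $i$ satisfies $f(x_i) = y_j$, and every atom of $f_*\rho$ outside $\{y_k\}$ contributes mass exactly $m_i^2$ to the total variation.

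The second sum is controlled by pure mass accounting. The identity $\sum_j Z_j + \sum_i m_i^2 q_i \equiv \mE$ (both sides are the total mass of $f_*\rho$), combined with the already-established $\E Z_j = \sum_i m_i^2 p_{i,j} = (m_j')^2 + D_j$, yields
\[
\E \sum_i m_i^2\, q_i(f) = \mE - \mE' - D \le \mE - \mE' < \delta \mE
\]
by hypothesis; the same hypothesis and \eqref{e.zd} give $D \le \mE - \mE' < \delta \mE$.

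For the first sum, injectivity of $f$ forces $Z_j(f)$ to take exactly one value among $\{m_i^2 : i \ge 0\}$ (with $m_0 := 0$), with probability $p_{i,j}$. Hence $Z_j(f)$ has the same law as $\rxi_j^2$, where $\rxi_j$ is the random variable already introduced. Writing $|\rxi_j^2 - (m_j')^2| = |\rxi_j - m_j'|\,(\rxi_j + m_j')$ and applying Cauchy--Schwarz,
\[
\E|Z_j - (m_j')^2| \le \sqrt{D_j\cdot \E(\rxi_j + m_j')^2} = \sqrt{D_j(D_j + 4(m_j')^2)} \le D_j + 2 m_j' \sqrt{D_j},
\]
since $\E(\rxi_j + m_j')^2 = \E\rxi_j^2 + 3(m_j')^2 = D_j + 4(m_j')^2$. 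Summing in $j$ and applying Cauchy--Schwarz once more,
\[
\E \sum_j |Z_j - (m_j')^2| \le D + 2\sqrt{\mE'\cdot D} \le \delta\mE + 2\sqrt{\delta}\,\mE.
\]
Combining the two bounds gives $\E\|f_*\rho - \rho'\|_{TV} \le 4\sqrt{\delta}\,\mE$ for $\delta \le 1$, and Markov's inequality yields $\mgr(\|f_*\rho - \rho'\|_{TV} > \eps \mE) \le 4\sqrt{\delta}/\eps$. Choosing $\delta < \eps^4/16$ finishes the proof.

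The step requiring the most care is identifying $Z_j(f)$ with $\rxi_j^2$ in distribution: this uses essentially that $f$ is a bijection, so that the indicators $\mathbf{1}_{f(x_i)=y_j}$ are pairwise disjoint in $i$ and the sum defining $Z_j$ collapses to a single term. This reduction is what converts the potentially fourth-moment-looking quantity $\E|Z_j - (m_j')^2|$ into a clean bound involving only $D_j$ and $m_j'$, which then sums cleanly via a second Cauchy--Schwarz. Without homeomorphism one would need to control cross-terms in $\Var Z_j$, and the argument would not close in the same way.
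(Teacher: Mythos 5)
Your proof is correct, and it takes a genuinely different route from the paper's. The paper classifies the atoms $y_j$ into $\eps'$-stable and unstable ones via a Chebyshev-type use of the variances $D_j$, shows the unstable atoms carry a small fraction of the $\rho'$-mass, and then controls the total variation through the random set $Y_f$ of stable atoms whose realized preimage has nearly the correct weight; this requires juggling three auxiliary parameters $\eps',\eps'',\delta$ and several case splits. You instead bound the first moment $\E_{\mgr}\|f_*\rho-\rho'\|_{TV}$ directly: the key observation that injectivity of $f$ makes $Z_j(f)$ equidistributed with $\rxi_j^2$ converts the atomwise error into $\E|\rxi_j^2-(m_j')^2|\le\sqrt{D_j\,(D_j+4(m_j')^2)}\le D_j+2m_j'\sqrt{D_j}$, and a second Cauchy--Schwarz over $j$ gives $\E\|f_*\rho-\rho'\|_{TV}\le 2\delta\mE+2\sqrt{\delta}\,\mE\le 4\sqrt{\delta}\,\mE$, after which Markov finishes. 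Both arguments rest on the same two identities, $\E\rxi_j^2=(m_j')^2+D_j$ and $\mE\ge\mE'+D$, and on the same decomposition of the total variation into the error over the common atoms $\{y_k\}$ plus the mass leaked off that set; but your version is shorter and yields an explicit dependence $\delta=\eps^4/16$, whereas the paper's is purely qualitative. (The degenerate case $\mE=0$, where the Markov step would divide by zero, is vacuous since the hypothesis $\mE-\mE'<\delta\mE$ then fails.)
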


Lemma \ref{l:almost} suffices to establish the second part of Theorem~\ref{p.max.full}. Indeed:
\begin{proof}[Proof of the second part of Theorem~\ref{p.max.full}]
Assume that for any $\delta>0$ there exists a measure $\msp$ on $X$ and $\mgr\in \bK_X$ such that
${\mE'}>(1-\delta)\mE$. Then ${\mE-\mE'}<\delta{\mE}$,
and we can apply Lemma \ref{l:almost}.

It implies that for any $\eps>0$ there exist measures $\mgr\in \bK_X$ and $\msp$ on $X$ such that for the normalized probability measure $\frac{1}{\mE}\rho$ most of its images (at least of $\mgr$-measure $1-\eps$) are within $2\eps$ from each other in the sense of total variation distance.

Passing to a weak accumulation point of $(\mgr,\frac{1}{\mE}\rho,\frac{1}{\mE'}\rho')$ as $\eps\to 0$, we can find a measure $\bar{\mgr}\in \bK_X$ and measures $\bar{\rho}$ and $\bar{\rho}'$ on $X$, such that $\bar{\mgr}$-almost surely the image of $\bar{\rho}$ is $\bar{\rho}'$. This contradicts the assumption on the set of measures~$\bK_X$.

Hence, for some $\delta_0>0$ for any measure $\msp$ on $X$ and $\mgr\in \bK_X$ we have ${\mE'}\le (1-\delta_0)\mE$. Therefore under the assumptions of the second part of Theorem~\ref{p.max.full} the sums of squares of weights of atoms of the measures
$\mgr_{n}* \dots *\mgr_{1}*\msp$
decay exponentially fast as $n$ increases. In particular, for any measure $\msp$ on $X$ and any $\mgr_1,\dots,\mgr_n$  we get
\[
\mathfrak{Max}\left(\mgr_{n}* \dots* \mgr_{1}*\msp\right) \le (1-\delta_0)^{n/2}.
\]
This completes the proof of the second part of Theorem~\ref{p.max.full}, assuming that Lemma~\ref{l:almost} holds.
\end{proof}

Let us now prove Lemma~\ref{l:almost}.
\begin{proof}[Proof of Lemma~\ref{l:almost}]

For any $\eps'>0$, let us say that an atom $y_j$ is \emph{$\eps'$-stable} if for its random preimage $x_i$ we have $\left|\frac{m_i-m_j'}{m'_j}\right|<\eps'$ with the probability at least $1-\eps'$. Let $S$ and $U$ be respectively the set
of $\eps'$-stable and unstable atoms.

For any non-$\eps'$-stable atom $y_j$ with probability at least $\eps'$ we have $|m_i-m_j'|\ge \eps' m_j'$. This implies that
$$
D_j\ge \eps'(\eps' m_j')^2=\eps'^3 \cdot m_j'^2,
$$
and hence
$$
D\ge \sum_{y_j\in U} D_j \ge \sum_{y_j\in U} \eps'^3 \cdot m_j'^2 = \eps'^3 \cdot \rho'(U). 
$$
Due to (\ref{e.zd}) we have $\mE\ge \mE'+ D$, and by assumption ${\mE-\mE'}<\delta{\mE}$, therefore
$D\le \mE-\mE'<\delta{\mE}$, and $\mE'> (1-\delta)\mE$.
 Hence we have
\begin{equation}\label{eq:E-prime}
(\eps')^3 \cdot \rho'(U) \le D \le \delta \mE < \frac{\delta}{1-\delta} \mE',
\end{equation}
thus the proportion of the unstable atoms (in terms of the $\rho'$-measure) is at most
$$
\kappa:=\frac{\delta}{(1-\delta) (\eps') ^3}.
$$
Notice that that for any fixed $\eps'$ the proportion $\kappa$ can be made arbitrarily small by a choice of sufficiently small~$\delta$.

For each $f$, denote
\begin{multline*}
   Y_f:=\{y_j, j=1, 2, \ldots \ \ \left| \ \ y_j \text{ is $\eps'$-stable}, \text{and} \right. \\ \frac{|m_i-m'_j|}{m'_j}<\eps', \ \text{where}\  m_i \ \text{is the weight of} \ x_i=f^{-1}(y_j) \}.
\end{multline*}

Definition of $\eps'$-stable atoms implies that at each of the stable atoms $y_j$ the probability that it is the image of some $x_i$ with $\frac{|m_i-m'_j|}{m'_j}<\eps'$ is at least $1-\eps'$; in other words, for any individual stable atom $y_j$ we have $\mgr(\{f : \, y_j\in Y_f\})\ge 1-\eps'$. Integrating it with respect to $\rho'$ and applying Fubini's theorem (see Fig.~\ref{f:Yf}), we get
\begin{equation}\label{eq:E-Yf}
\E_{\mgr} \rho'(Y_f) =\sum_{y\in S} \rho'(y) \cdot \mgr(\{f: y \in Y_f\}) \ge (1-\eps') \rho'(S) \ge (1-\eps') \cdot (1-\kappa) \mE'.
\end{equation}
\begin{figure}
\includegraphics{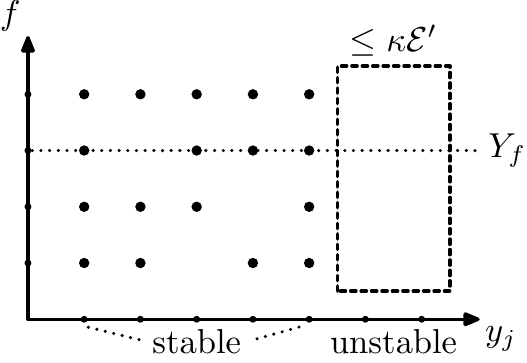}
\caption{Set $(y_j,f)$ such that $y_j$ is $\eps'$-stable, $y_j=f(x_i)$ and $\frac{|m_i-m'_j|}{m'_j}<\eps'$.}\label{f:Yf}
\end{figure}
From here we get
$$
\E_{\mgr} (\mE'-\rho'(Y_f)) \le (1-(1-\eps') (1-\kappa)) \mE'\le (\eps'+\kappa) \mE'.
$$
and hence the Markov's inequality implies that for any $\eps''>0$,
$$
\mgr(\{ f \mid \rho'(Y_f)>(1-\eps'') \mE' \}) =
\mgr(\{ f \mid \mE'- \rho'(Y_f) <\eps'' \mE'  \})
\ge
1-\frac{\eps'+\kappa}{\eps''}.
$$
Choose $\eps''=\frac{\eps}{6}$. Then choose sufficiently small $\eps'>0$ to make sure that $\frac{\eps'}{\eps''}<\frac{\eps}{2}$ and $\eps'(2+\eps') <\frac{\eps}{6}$ (the reason for the latter requirement will be clear soon, see Eq.~\eqref{eq:eps-pp} below). Finally, let us choose $\delta>0$ sufficiently small so that $\frac{\delta}{1-\delta}<\frac{\eps}{3}$ and  $\frac{\kappa}{\eps''}=\frac{\delta}{(1-\delta) (\eps') ^3\eps''}<\frac{\eps}{2}$.
 Then we have
$$
\mgr(\{ f \mid \rho'(Y_f)>(1-\eps'') \mE' \})  \ge 1-\frac{\eps'+\kappa}{\eps''} \ge 1- \frac{\eps}{2} - \frac{\eps}{2} =1-\eps.
$$
Therefore, with the $\mgr$-probability at least $1-\eps$ we have
\begin{equation}\label{e.zz}
\rho'(Y_f)>(1-\eps'') \mE'.
\end{equation}
Let us show that (\ref{e.zz}) implies that  $\|f_*(\rho)-\rho'\|_{TV}\le \eps \mE$. Indeed,
the total variation $\|f_*(\rho)-\rho'\|_{TV}$ can be bounded from above:
$$
\|f_*(\rho)-\rho'\|_{TV} \le \sum_{y\in Y_f} |\rho(f^{-1}(y))-\rho'(y)| + (\mE'-\rho'(Y_f)) + (\mE-\rho(f^{-1}(Y_f)).
$$
We have
\begin{multline*}
(\mE-\rho(f^{-1}(Y_f)) \le (\mE-\mE') + (\mE'-\rho'(Y_f)) + |\rho'(Y_f)-\rho(f^{-1}(Y_f))| \le
\\
\le (\mE-\mE') + (\mE'-\rho'(Y_f)) + \sum_{y\in Y_f} |\rho(f^{-1}(y))-\rho'(y)|,
\end{multline*}
therefore
\begin{equation}\label{eq:TV-bound}
\|f_*(\rho)-\rho'\|_{TV} \le 2\sum_{y\in Y_f} |\rho(f^{-1}(y))-\rho'(y)| + 2 (\mE'-\rho'(Y_f)) + (\mE-\mE').
\end{equation}
Recall that by assumption $\mE-\mE'<\delta \mE$;
this implies (see~\eqref{eq:E-prime})
\begin{equation}\label{e.z}
\mE-\mE'< \delta \mE< \frac{\delta}{1-\delta}\mE'.
\end{equation}
If $y_j\in Y_f$, then $\left|\frac{m_i-m_j'}{m_j'}\right|<\eps'$, and hence
\begin{equation}\label{eq:eps-pp}
|m_i^2-(m_j')^2|=\left|\frac{m_i-m_j'}{m_j'}\right|\cdot\left|\frac{2m_j'+(m_i-m_j')}{m_j'}\right|(m_j')^2\le \eps'(2+\eps')(m_j')^2.
\end{equation}
Therefore,
\begin{multline}\label{e.z1}
\sum_{y\in Y_f} |\rho(f^{-1}(y))-\rho'(y)| \le \sum_{y\in Y_f} \eps' (2+\eps') \rho'(y) = \\ =  \eps' (2+\eps')\rho'(Y_f)\le \eps'(2+\eps') \mE'.
\end{multline}
Combining (\ref{e.zz}), (\ref{eq:TV-bound}), (\ref{e.z}), and (\ref{e.z1}), and due to the choice of $\eps', \eps''$, and $\delta$ above, we get
\begin{multline*}
\|f_*(\rho)-\rho'\|_{TV} \le \left(2\eps'(2+\eps')+2\eps''+  \frac{\delta}{1-\delta}\right) \mE' <
\left(2\frac{\eps}{6}+2\frac{\eps}{6}+\frac{\eps}{3}\right) \mE' =  \eps \mE'\le \eps \mE,
\end{multline*}
which concludes the proof of Lemma \ref{l:almost}.
\end{proof}

We are now ready to complete the proof of Theorem~\ref{p.max.full}:
\begin{proof}[Proof of the first part of Theorem~\ref{p.max.full}]
Assume the contrary: there exists $\eps>0$ such that for arbitrary large $n$ there exists a probability measure $\msp$ on $X$ and measures $\mgr_1,\dots,\mgr_n\in\bK_X$, such that
\[
\mathfrak{Max}\left( \mgr_n*\dots*\mgr_1*\msp \right) \ge \eps.
\]

For a measure $\msp$, let
\[
\mE(\msp):=\sum_{\msp(\{x\})>0} \msp(\{x\})^2
\]
be the $\ell_2$-norms of the vector of weights of its atoms. Take the sequence of intermediate iterates
\[
\msp_k:=\mgr_k*\dots*\mgr_1*\msp, \quad k=0,\dots, n,
\]
and consider the corresponding sequence $\mE(\msp_k)$. These squared norms form a non-increasing (due to~\eqref{e.zd}) sequence, starting with $\mE(\msp_0)\le 1$ and ending with $\mE(\msp_n)\ge \eps^2$. Hence, for every $n$ there exists $k$ such that
\begin{equation}\label{eq:Z-n}
\mE(\msp_{k+1})\ge \eps^{2/n} \mE(\msp_k).
\end{equation}
As $\eps>0$ is fixed, and $n$ can be chosen arbitrarily large,~\eqref{eq:Z-n}
implies that for every $\delta>0$ there exist measures
\[
\msp'=\msp_k, \, \msp''=\msp_{k+1}=\mgr_{k+1}*\msp',
\]
such that $\mE(\msp'')>(1-\delta) \mE(\msp')$.

Let us now apply the argument of the proof of Lemma~\ref{l:almost}: consider the normalized measures
\begin{equation}\label{eq:sq-atoms}
\frac{1}{\mE(\msp')} \sum_{\msp'(x_i)>0} (\msp'(x_i) ^2 \Dirac_{x_i}), \quad
\frac{1}{\mE(\msp'')} \sum_{\msp''(x_i)>0} (\msp'(x_i) ^2 \Dirac_{x_i}).
\end{equation}

The same arguments as above imply that any accumulation point of these measures and of the corresponding $\mgr$'s  is a measure $\bar{\msp}$ with a deterministic image and its image $\bar{\msp}'$:
\begin{equation}\label{eq:f-mu-prime}
f_* \bar{\msp}=\bar{\msp}' \quad \text{for $\mgr$-a.e. } \, f\in \Homeo(X)
\end{equation}

In itself, it wouldn't be a contradiction, as we no longer assume the absence of a measure with a deterministic image. However, the maximal weight of an atom cannot be increased by a convolution, and hence before passing to the limit one has always
\[
\mathfrak{Max}(\msp'),\mathfrak{Max}(\msp'') \ge \mathfrak{Max}\left( \mgr_n*\dots*\mgr_1*\msp \right) \ge \eps.
\]
Hence, the maximal weights of the normalized measures~\eqref{eq:sq-atoms} are at least $\eps^2$, and thus (passing to the limit) one has
\[
\mathfrak{Max}(\bar{\msp}),\mathfrak{Max}(\bar{\msp}')\ge \eps^2.
\]
Finally, consider the set of atoms of maximal weight (that is the same for $\bar{\msp}$ and for~$\bar{\msp}'$); denote
\[
\eps':=\mathfrak{Max}(\bar{\msp})=\mathfrak{Max}(\bar{\msp})
\]
and let
\[
F:= \{x: \bar{\msp}(x) =\eps' \}, \quad F':= \{x: \bar{\msp}(x) = \eps'\}.
\]
Then these are two finite sets (consisting of at most $\frac{1}{\eps'}$ points), and~\eqref{eq:f-mu-prime} implies that
\begin{equation}\label{eq:f-mu-prime-new}
f_* F=F' \quad \text{for $\mgr$-a.e. } \, f\in \Homeo(X)
\end{equation}

Hence, $\bar{X}$ is a finite set with a deterministic image, and this provides us with the desired contradiction.
\end{proof}

\subsection{Linear case: a point avoids a subspace}\label{s:planes}

This section is devoted to the proof of Proposition~\ref{p:proj-dissolving}

Before passing to its proof, let us establish an ancillary lemma:
\begin{lemma}\label{l:L-M}
Let $2\le m\le d-1$, and assume that $\msp$ is a measure on $\RP^{d-1}$, such that for any $L\in \Gr(m-1,d)$ one has $\msp([L])\le \frac{\eps}{4}$. Then there exists at most $N=\left\lfloor\frac{2^{2m-1}}{\eps^{m}}\right\rfloor$ subspaces $L'\in \Gr(m,d)$ such that $\msp([L'])\ge \frac{\eps}{2}$.
\end{lemma}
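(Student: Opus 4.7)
The plan is to prove the bound by induction on $m \ge 2$, using throughout the identity $\int N(x) \, d\msp(x) = \sum_i \msp([L'_i]) \ge k \eps/2$, where $N(x) := \#\{i : x \in [L'_i]\}$: a uniform bound on $N$ converts immediately into a bound on $k$.

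For the base case $m = 2$ I would establish a pencil bound at a single point. Fix $x \in \RP^{d-1}$ and consider the $2$-dim subspaces $L'_{j_1},\ldots,L'_{j_s}$ in our collection that contain $x$. Two distinct $2$-dim subspaces both containing $\langle x \rangle$ intersect exactly in $\langle x \rangle$, so the sets $[L'_{j_\ell}] \setminus \{x\}$ are pairwise disjoint. Each carries $\msp$-mass at least $\eps/2 - \msp(\{x\}) \ge \eps/4$, using that the singleton $\{x\} = [\langle x \rangle]$ is covered by the $\Gr(1,d)$ hypothesis. Since their total mass is at most $1$, we get $s \le 4/\eps$, hence $N(x) \le 4/\eps$ uniformly in $x$, and so $k \cdot \eps/2 \le \int N \, d\msp \le 4/\eps$, giving $k \le 8/\eps^2 = N_2$.

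For the inductive step ($m \ge 3$), I would bound $N(x)$ by passing to the quotient $\pi : \R^d \twoheadrightarrow \R^d/\langle x\rangle \cong \R^{d-1}$: the $L'_i$'s through $x$ descend to distinct $(m-1)$-dim subspaces of $\R^{d-1}$, and the pushforward $\msp_x := \pi_*(\msp|_{\RP^{d-1}\setminus\{x\}})$ on $\RP^{d-2}$ inherits the appropriate mass bounds — namely $\msp_x([\tilde L]) \le \eps/4$ for every $\tilde L \in \Gr(m-2, d-1)$, while the $\msp_x$-mass of each quotient $L'_i/\langle x\rangle$ is $\ge \eps/4$. Combined with the one-step-up pencil bound (for any $L^* \in \Gr(m-1,d)$, at most $4/\eps$ of our $L'_i$'s can contain $L^*$, by the same disjointness argument as in the base case but with $[L^*]$ playing the role of $\{x\}$), the inductive hypothesis should yield the recursion $N_m \le (4/\eps)\,N_{m-1}$, which matches exactly the formula $N_m = 2^{2m-1}/\eps^m$.

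The principal obstacle is the threshold mismatch after pushforward: the counted subspaces in $\RP^{d-2}$ have $\msp_x$-mass $\ge \eps/4$ while the hypothesis on $(m-2)$-dim subspaces is also $\le \eps/4$ — there is no factor-of-two gap on which to invoke the inductive statement directly. The most delicate step is to reconcile this by combining the pencil bound with a careful assignment of representatives: to each $L'_i$ assign an $(m-1)$-dim subspace $L^*_i \subset L'_i$ so that many $L'_i$ share a representative (handled by the pencil bound giving the $4/\eps$ factor), while the number of distinct representatives is controlled by applying the inductive hypothesis, possibly in a slightly generalized two-threshold form tracking how the counted and hypothesis masses relate along the recursion. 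Getting the constants to multiply cleanly into $2^{2m-1}/\eps^m$ is the technical heart of the proof.
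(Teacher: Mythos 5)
Your base case $m=2$ is correct, and the double-counting identity $\int N(x)\,d\msp(x)=\sum_i\msp([L'_i])$ is a sound framework. But the inductive step for $m\ge 3$ has a genuine, unfilled gap, and the threshold mismatch you flag is a real obstruction rather than a bookkeeping issue. Concretely: after quotienting by $\langle x\rangle$, the counted subspaces $L'_i/\langle x\rangle$ have $\msp_x$-mass at least $\eps/2-\eps/4=\eps/4$, while the hypothesis you can verify for $(m-2)$-dimensional subspaces of the quotient is also only $\le\eps/4$ (their preimages are $(m-1)$-dimensional in $\R^d$). If you formalize the natural two-threshold statement --- all $(m-1)$-planes have mass $\le a$, count $m$-planes of mass $\ge b$ --- the quotient recursion reads $N_m(a,b)\le\frac{1}{b}N_{m-1}(a,b-a)$, which unrolls to $\frac{1}{b(b-a)(b-2a)\cdots(b-(m-1)a)}$ and requires $b>(m-1)a$. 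With the lemma's constants $b=\eps/2=2a$ this closes only for $m=2$; the gap between counted mass and hypothesis mass is consumed after one quotient. Your alternative route via representatives $L_i^*\in\Gr(m-1,d)$ also cannot work as described: the pencil bound (at most $4/\eps$ of the $L'_i$ contain a fixed $L^*$) is fine, but the inductive hypothesis counts $(m-1)$-planes of \emph{large} mass, whereas your representatives have mass $\le\eps/4$ by assumption, so the inductive hypothesis says nothing about how many distinct representatives there are.

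The paper avoids induction entirely with a direct probabilistic argument that you may find worth internalizing: draw $m$ points $y_1,\dots,y_m$ independently from $\msp$ and let $\Xi$ be their linear span when it is $m$-dimensional. For any $L'$ with $\msp([L'])\ge\eps/2$, the first point lands in $[L']$ with probability $\ge\eps/2$, and each subsequent point lands in $[L']$ but outside the projective span of its predecessors with conditional probability $\ge\eps/2-\eps/4$, because that span sits inside some $(m-1)$-dimensional subspace of mass $\le\eps/4$. Hence $\Prob(\Xi=L')\ge\frac{\eps}{2}\left(\frac{\eps}{4}\right)^{m-1}=\frac{\eps^m}{2^{2m-1}}$, and since these events are disjoint over distinct $L'$, at most $\frac{2^{2m-1}}{\eps^m}$ such subspaces exist. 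The key structural difference is that at every step the ``forbidden'' set is compared against the \emph{same} hypothesis bound $\eps/4$ in the ambient space, so the mass gap $\eps/4$ is paid once per point rather than compounding as it does under repeated quotients. If you want to salvage your induction, you would have to prove a version of the lemma with a hypothesis threshold that shrinks geometrically in $m$ (e.g.\ $\le\eps/2^m$ on $(m-1)$-planes), which is a different and strictly weaker statement than the one asserted.
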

\begin{proof}[Proof of Lemma~\ref{l:L-M}]
Consider the random variable $\Xi$, taking values in $\Gr(m,d) \cup \{\mathrm{NDef}\}$, defined in the following way:
\begin{itemize}
\item Take $m$ points $y_1,\dots, y_m\in \RP^{d-1}$ to be randomly and independently chosen with respect to~$\msp$.
\item If the lines in $\R^d$ corresponding to these points are contained in a subspace of dimension at most $(m-1)$, we set $\Xi=\mathrm{NDef}$.
\item Otherwise, let $\Xi$ be the unique $m$-dimensional subspace passing through these lines; in other words, $[\Xi]\subset \RP^{d-1}$ is the unique $(m-1)$-dimensional projective subspace passing through $y_1,\dots,y_m$.
\end{itemize}
Now, note that for any $L'\in \Gr(m,d)$ such that $\msp([L'])\ge \frac{\eps}{2}$ one has
\begin{equation}\label{eq:L'}
\Prob (\Xi=L') \ge \frac{\eps^{m}}{2^{2m-1}}.
\end{equation}
Indeed, one has $\Prob(y_1\in [L']) =\msp([L']) \ge \frac{\eps}{2}$. Next, if the points $y_1,\dots, y_{j-1}$ are in $[L']$ and in general position, chose a $(m-2)$-dimensional projective subspace $[L]$ passing through them. Then the conditional probability that the next point $y_j$ belongs to $[L']$, but does not belong to the $(j-2)$-dimensional projective subspace passing through $y_1,\dots,y_{j-1}$, is at least
\[
\msp([L']\setminus [L]) = \msp([L']) - \msp([L]) \ge \frac{\eps}{2}-\frac{\eps}{4}=\frac{\eps}{4}
\]
due to the assumptions of the lemma.

Multiplying such conditional probabilities for $j=2,\dots,m$, we get a lower bound for $\Prob (\Xi=L')$ by
\[
\Prob (\Xi=L') \ge \frac{\eps}{2} \cdot \left(\frac{\eps}{4}\right)^{m-1} = \frac{\eps^m}{2^{2m-1}},
\]
thus obtaining the desired~\eqref{eq:L'}. Finally, as we have this lower bound for the probability of the event $\Xi=L'$ for any $L'$ satisfying $\msp([L'])\ge \frac{\eps}{2}$, the number of such subspaces cannot exceed the inverse of this lower bound, and thus its integer part $N=\left\lfloor\frac{2^{2m-1}}{\eps^{m}}\right\rfloor$.
\end{proof}

\begin{proof}[Proof of Proposition~\ref{p:proj-dissolving}]
The proof is by induction on $m$. The base $m=1$ coincides with the Atoms Dissolving Lemma, as $(m-1)$-dimensional projective plane~$[L]$ is then a point in~$\RP^{d-1}$.

For the induction step, we will use Lemma~\ref{l:L-M}. To do so, assume that the statement is already established for some $m-1<d-1$, and let us establish it for~$m$.
Let $\eps>0$ be chosen; take $k':=k_{m-1}(\frac{\eps}{4})$. Take any $k'$ steps $\mgr_1,\dots, \mgr _{k'} \in \bK$ and denote
\[
\msp:=\mgr_1*\dots*\mgr_{k'}* \Dirac_{x_0}.
\]
Then, by choice of $k'$, for any $(m-1)$-dimensional subspace $L\subset \R^d$ one has
\[
\msp([L])=\Prob_{\mgr_1,\dots,\mgr_{k'}} ((f_{k'}\circ\dots\circ f_1)(x_0)\in [L])\le \frac{\eps}{4}.
\]
By Lemma~\ref{l:L-M}, this implies that there exists at most $N=\left\lfloor\frac{2^{2m-1}}{\eps^{m}}\right\rfloor$ subspaces $L'\in \Gr(m,d)$ such that $\msp([L'])\ge \frac{\eps}{2}$.

Now, consider the induced action on the space of $m$-dimensional subspaces $\Gr(m,d)$.  Note that due to the ``spaces condition'' in the assumptions, there are no finite sets of subspaces with deterministic image. Hence, the first part of Atoms Dissolving Theorem~\ref{p.max.full} is applicable. Thus, there exists $k''$ such that for any steps $\mgr_{k'+1},\dots, \mgr _{k'+k''}\in \bK$ and any two $m$-dimensional subspaces $L', L''$ one has
\[
\Prob_{\mgr_{k'+1}*\dots*\mgr_{k'+k''}} ((f_{k'+k''}\circ\dots\circ f_{k'+1})([L'])= [L'']) < \frac{\eps}{2N},
\]
where again $f_i$ are distributed w.r.t.~$\mgr_i$.

Now, let us show that we can take $k_m(\eps):=k'+k''$. Indeed, take any $\mgr_1,\dots,\mgr_{k'+k''}\in \bK$, any $x_0\in \RP^{d-1}$ and any $L''\in \Gr(m,d)$. As before, let
\[
\msp:=\mgr_1*\dots*\mgr_{k'}* \Dirac_{x_0},
\]
and let us decompose
\[
f_{k'+k''}\circ \dots \circ f_1 =  \underbrace{(f_{k'+k''}\circ \dots \circ f_{k'+1})}_{G} \circ \underbrace{(f_{k'}\circ \dots \circ f_1)}_{F} = G\circ F.
\]
Take all the $m$-dimensional subspaces $L'\subset \R^d$ satisfying $\msp(L')\ge \frac{\eps}{2}$; let
\[
L_1,\dots,L_{N'}\in \Gr(m,d), \quad N'\le N
\]
be the full list of such subspaces.

Inverting the last $k''$ of $k'+k''$ applied random maps, for any $L''\in \Gr(m,d)$ one gets
\begin{multline}\label{eq:G-final}
\Prob_{\mgr_1,\dots,\mgr_{k'+k''}} ((f_{k'+k''}\circ \dots \circ f_1)(x_0)\in [L'']) = \\
\Prob_{(F,G)\sim (\mgr_{k'}*\dots*\mgr_1,\mgr_{k'+k''}*\dots*\mgr_{k'+1})} \left(F(x_0)\in G^{-1}([L''])\right) = \\
= \E_{G\sim \mgr_{k'+k''}*\dots*\mgr_{k'+1}} \msp\left(G^{-1} ([L''])\right).
\end{multline}

\begin{figure}[!h!]
\includegraphics[width=0.7\textwidth]{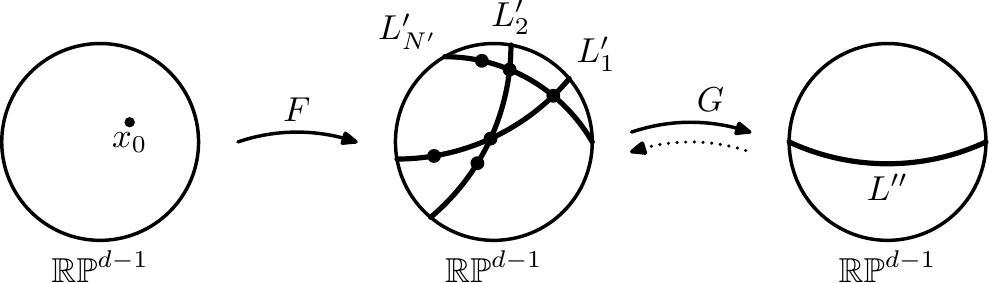}
\caption{Initial point, $N'$ intermediate subspaces of $\msp$-measure at least $\frac{\eps}{2}$, and the subspace~$L''$}\label{f:decomposition}
\end{figure}

Let us decompose (see Fig.~\ref{f:decomposition}) the expectation in the right hand side of~\eqref{eq:G-final} into two parts, depending on whether the preimage~$G^{-1} ([L''])$ is of $\msp$-measure greater or less than~$\frac{\eps}{2}$:
\begin{multline}\label{eq:G-bound}
\E_{G\sim (\mgr_{k'+k''}*\dots*\mgr_{k'+1})} \, \msp\left(G^{-1} ([L''])\right)=
\\ =
\E_{G\sim (\mgr_{k'+k''}*\dots*\mgr_{k'+1})}
\left[\msp\left(G^{-1} ([L''])\right)   \Ind_{\msp\left(G^{-1} ([L''])\right)< \frac{\eps}{2}}\right] \\
+
\E_{G\sim (\mgr_{k'+k''}*\dots*\mgr_{k'+1})}
\left[\msp\left(G^{-1} ([L''])\right)   \Ind_{\msp\left(G^{-1} ([L''])\right)\ge \frac{\eps}{2}}\right] \\
\end{multline}
Now, the former summand in the right hand side is strictly smaller than~$\frac{\eps}{2}$, as it is a strict upper bound for $\msp\left(G^{-1} ([L''])\right)$ whenever the indicator doesn't vanish. On the other hand, for the latter summand it suffices to use  $\msp\left(G^{-1} ([L''])\right)\le 1$ to obtain
\begin{multline}
\E_{G\sim (\mgr_{k'+k''}*\dots*\mgr_{k'+1})}
\left[\msp\left(G^{-1} ([L''])\right)   \Ind_{\msp\left(G^{-1} ([L''])\right)\ge \frac{\eps}{2}}\right] \le
\\
\le \Prob_{G\sim (\mgr_{k'+k''}*\dots*\mgr_{k'+1})}   \left(\msp\left(G^{-1} ([L''])\right) \ge \frac{\eps}{2} \right)=
\\
= \sum_{j=1}^{N'}
\Prob_{G\sim (\mgr_{k'+k''}*\dots*\mgr_{k'+1})}  \left( G^{-1} ([L'']) = [L_j]  \right) \le
\\
\le N'\cdot \frac{\eps}{2N} \le \frac{\eps}{2}.
\end{multline}
Adding these two estimates, we obtain
\begin{multline*}
\Prob_{\mgr_1,\dots,\mgr_{k'+k''}} ((f_{k'+k''}\circ \dots \circ f_1)(x_0)\in [L'']) = \\
= \E_{G\sim \mgr_{k'+k''}*\dots*\mgr_{k'+1}} \msp\left(G^{-1} ([L''])\right)
<\frac{\eps}{2} + \frac{\eps}{2} = \eps.
\end{multline*}
This completes the induction step; indeed, for any $k\ge k_m(\eps)$ it suffices to consider the last applied $k_m(\eps)$ maps and average over the possible images of $x_0$ during the initial $(k-k_m(\eps))$ ones.
\end{proof}

\begin{appendix}
\section{}\label{a.1}

In the stationary case, assuming the absence of an invariant measure for the action on $\RP^{d-1}$, together with the moments condition, suffices to ensure the existence and the positivity of the Lyapunov exponent; see~\cite{Vi}. Here we present an example, showing that for the nonstationary setting this is no longer the case, that is, that the assumptions of Theorem~\ref{t.1} with the finite moment condition but without the spaces condition do not suffice to obtain the conclusion of Theorem~\ref{t.L}.

Namely, here we present an example of a sequence of probability measures $\mgr_n$ on $\SL(4, \mathbb{R})$, for which the assumptions of Theorem~\ref{t.1} hold, the norms $\|A_j\|$ are uniformly bounded, but there is no sequence $\{\mathcal{L}_n\}$ of real numbers such that almost surely $\lim_{n\to \infty}\frac{1}{n}\left(\log \|T_n\|-\mathcal{L}_n\right)=0$.

Let $B$ be a random $\SL(2, \mathbb{R})$ matrix given in the following way. With probability $1/2$ we set $B=\begin{pmatrix}
2 & 0 \\
0 & 1/2 \\
\end{pmatrix}$,
and with probability $1/2$ we choose $B$ as a rotation by a random angle uniformly distributed on the circle.

By Furstenberg Theorem, there exists $\lambda>0$ such that almost surely there exists
\begin{equation}\label{eq:B-lambda}
\lim_{n\to \infty}\frac{1}{n}\|B_n\ldots B_1\|=\lambda>0,
\end{equation}
where $\{B_i\}$ are i.i.d., chosen with respect to the same law as described above.

In our example we will only have two distributions on $\SL(4, \mathbb{R})$, we will denote them by $\mgr_\alpha$ and $\mgr_\beta$. The distribution $\mgr_\alpha$ will be given by block-diagonal matrices
\begin{equation}\label{e.B}
\hB=\begin{pmatrix}
  \frac{1}{100}B^{(1)} & 0 \\
  0 & 100B^{(2)} \\
\end{pmatrix},
\end{equation}
where $B^{(1)}$ and $B^{(2)}$ are independent random matrices, distributed with respect to the same law as the matrix $B$ above. The distribution $\mgr_\beta$ will be defined in the following way. Let
\[
M:=\begin{pmatrix}
  0 & 0 & 1 & 0 \\
  0 & 0 & 0 & 1 \\
  1 & 0 & 0 & 0 \\
  0 & 1 & 0 & 0 \\
\end{pmatrix}
\]
be the matrix that interchanges two copies of $\R^2$. Then, we choose the matrix $\hB$ as in~\eqref{e.B}, and take the final random matrix $A:=Q\hB$, where
$$
Q= \begin{cases}
\Id & \text{ with probability $1/2$},\\
M& \text{ with probability $1/2$}
\end{cases}
$$
is chosen independently from~$B$. It is not hard to see that the assumptions of Theorem~\ref{t.1} are satisfied for both distributions $\mgr_\alpha$ and $\mgr_\beta$. Then, for an appropriate choice of the sequence $\mgr_i$, at some moments $n$ the law of $\frac{1}{n} \log \|T_n\|$ is bimodal, preventing it from converging to a deterministic sequence. Namely, we have the following statement.

\begin{example}\label{ex:R-4}
Choose an increasing subsequence $\{n_m\}_{m\in \mathbb{N}}$ of natural numbers, such that
\[
\lim_{m\to\infty} \frac{n_{m+1}}{n_m} = \infty;
\]
for instance, one can take $n_m = 10^{m!}$. Let
$$
\mgr_n=\left\{
        \begin{array}{ll}
          \mgr_\beta, & \hbox{if $n\in\{n_m \mid m\in \mathbb{N}\}$;} \\
          \mgr_\alpha, & \hbox{otherwise.}
        \end{array}
      \right.
$$
Take the matrices $A_i\in \SL(4, \mathbb{R})$ be chosen independently with respect to the distribution $\mgr_i$, and set $T_n=A_n\cdot \ldots \cdot A_1$. Take $n_m':=2n_m$, and define
\[
\eta_m:= \begin{cases}
\lambda + \log 100 & \text{ if } Q_{m}=\Id,\\
\lambda & \text{ if } Q_{m}=M,
\end{cases}
\]
where $A_{n_m}=Q_{m} \hB_{n_m}$. Then, almost surely we have
\begin{equation}\label{eq:n-m-prime}
\frac{1}{n_m'} \log \|T_{n_m'}\|-\eta_m \xrightarrow[m\to\infty]{} 0.
\end{equation}
In particular, as all the random variables $\eta_m$ are independent and take values $\lambda$ and $\lambda+\log 100$ with probability $1/2$,
there is no sequence $\{\mathcal{L}_n\}$ of real numbers such that almost surely
\[
\lim_{n\to \infty}\frac{1}{n}\left(\log \|T_n\|-\mathcal{L}_n\right)=0,
\]
as such a statement would fail even on the subsequence~$n'_m$.

Moreover, if instead one chooses $n_m'$ in a way that $\frac{n_m'}{n_m} \to 1+c$, where $c>0$, then~\eqref{eq:n-m-prime} still holds with the random variable $\eta_{m,c}$ taken instead of $\eta_m$, where
\[
\eta_{m,c} = \begin{cases}
\lambda + \log 100 &  \text{ if } Q_{m}=\Id,\\
\lambda + \frac{|1-c|}{1+c}\log 100&  \text{ if } Q_{m}=M.
\end{cases}
\]
\end{example}
\begin{proof}
Note first that the norm of each $A_i$ is equal to $2\cdot 100=200$, and hence the contribution of the product $T_{n_{m-1}}$ of the first $n_{m-1}$ matrices to $\frac{1}{n'_m} \log \|T_{n'_m}\|$ does not exceed
\[
\frac{n_{m-1}}{n'_m} \log(200) \to 0, \quad m\to \infty.
\]
Thus, if we replace
\[
T_{n'_m} = \hB_{n'_m} \dots \hB_{n_m +1} Q_m \hB_{n_m} \dots \hB_{n_{m-1} +1} Q_{m-1} \hB_{n_m} \dots \hB_{n_1+1}Q_1 \hB_{n_1} \dots \hB_{1}
\]
with the product from which all $Q_1,\dots,Q_{m-1}$ are removed,
\[
\wT_{m} := \hB_{n'_m} \dots \hB_{n_m +1} Q_m \hB_{n_m} \dots \hB_{1},
\]
then
\[
\frac{1}{n'_m} \left| \log \|\wT_m\| - \log \|T_{n'_m}\|\right| \le \frac{n_{m-1}}{n'_m}\cdot 2 \log(200) \to 0,
\]
and hence it suffices to establish~\eqref{eq:n-m-prime} with $\wT_m$ instead of~$T_{n'_m}$.

Now, the products before and after $Q_m$ in $\wT_m$ are respectively equal to
\[
\begin{pmatrix}
  \frac{1}{100^{n_m}} B_{n_m}^{(1)}\dots B_{1}^{(1)} & 0 \\
  0 & 100^{n_m} B_{n_m}^{(2)}\dots B_{1}^{(2)} \\
\end{pmatrix}
\]
and
\[
\begin{pmatrix}
  \frac{1}{100^{(n'_m-n_m)}} B_{n'_m}^{(1)}\dots B_{n_m+1}^{(1)} & 0 \\
  0 & 100^{(n'_m-n_m)} B_{n'_m}^{(2)}\dots B_{n_m+1}^{(2)} \\
\end{pmatrix}.
\]
Thus, the full product $\wT_m$ is a block matrix: it has two zero $2\times 2$ blocks and two blocks that are products of $n'_m$ independent matrices $B^{(i)}_j$ and a scalar $100^{\pm k_m},$ where
\[
k_m =  \begin{cases}
n'_m &  \text{ if } Q_{m}=\Id,\\
n_m-(n'_m-n_m)&  \text{ if } Q_{m}=M.
\end{cases}
\]

Using~\eqref{eq:B-lambda}, one gets the almost sure representation for the logarithm of the norm $\log \|\wT_m\|$ as a sum of
\[
\begin{cases}
n'_m \log 100 &  \text{ if } Q_{m}=\Id,\\
|n'_m-2n_m| \log 100&  \text{ if } Q_{m}=M,
\end{cases}
\]
corresponding to the powers of 100, and of $n'_m (\lambda + o(1))$, corresponding to the norms of products of $B^{(i)}_j$; dividing by $n'_m$ and passing to the limit provides the desired~\eqref{eq:n-m-prime}.
\end{proof}

\end{appendix}

\section*{Acknowledgments}

We are grateful to David Damanik, Ilya Goldsheid, and Lana Jitomirskaya
for helpful discussions and useful remarks.

\end{document}